\documentclass[11pt,reqno]{amsart}

\usepackage{amsmath,amssymb,amsthm}
\usepackage[T1]{fontenc}
\usepackage[letterpaper,margin=1.1in]{geometry}
\usepackage[hidelinks]{hyperref}
\hypersetup{pdftitle={Renyi stability of Bh sets: a two-order phase diagram and sharp deletion principles},
  pdfkeywords={Bh set, Sidon set, Renyi entropy, coarsening, additive energy, stability, phase transition, deletion method, generalized Sidon set},
  pdfauthor={Jae Oh Woo}}

\theoremstyle{plain}
\newtheorem{theorem}{Theorem}[section]
\newtheorem{corollary}[theorem]{Corollary}
\newtheorem{proposition}[theorem]{Proposition}
\newtheorem{lemma}[theorem]{Lemma}

\theoremstyle{definition}
\newtheorem{example}[theorem]{Example}

\newtheorem{question}[theorem]{Question}

\theoremstyle{remark}
\newtheorem{remark}[theorem]{Remark}

\newcommand{\Z}{\mathbb{Z}}
\newcommand{\PR}{\mathbb{P}}
\newcommand{\E}{\mathbb{E}}
\newcommand{\Uc}{\mathcal{U}}
\newcommand{\Ec}{\mathcal{E}}
\newcommand{\Cc}{\mathcal{C}}
\newcommand{\Rc}{\mathcal{R}}
\newcommand{\Lc}{\mathcal{L}}
\DeclareMathOperator{\supp}{supp}
\DeclareMathOperator{\Unif}{Unif}

\numberwithin{equation}{section}

\begin{document}

\title[{R\'enyi stability of $B_h$ sets}]
      {R\'enyi stability of $B_h$ sets: a two-order phase diagram
       and sharp deletion principles}

\author{Jae Oh Woo$^{*}$}
\address{Amazon Web Services}
\thanks{$^{*}$This work was carried out independently of Amazon Web Services and
does not represent the views of Amazon Web Services.}
\email{jaeoh.woo@aya.yale.edu}

\subjclass[2020]{Primary 11B13, 11B30; Secondary 05D05, 05B10, 94A17}
\keywords{$B_h$ set, Sidon set, R\'enyi entropy, coarsening, additive energy,
  stability, phase transition, deletion method, generalized Sidon set}
\date{}

\begin{abstract}
A set $B$ in an abelian group is a $B_h$ set if every $h$-term sum has a unique
representation up to permutation; for $h=2$ these are the Sidon sets. We study a
weighted removal problem for this collision-free property: if the $h$-fold sum
map has small R\'enyi entropy loss, how much probability mass must be deleted so
that the remaining support is a $B_h$ set? Two R\'enyi orders arise: $\alpha$ is the order at which
the coarsening loss is measured, whereas $\beta$ is the order of the entropy
constraint. The diagonal specialization $\beta=\alpha$ ties the two roles
together. We determine the resulting stability problem on the positive
$(\alpha,\beta)$-quadrant. Stability holds exactly when
$\beta\le1$ and $\alpha\ge\beta$. Inside this region the optimal deletion rate
is polynomial for $\beta<1$ and logarithmic on the boundary $\beta=1$, where the
leading constant is exact; outside it, stability fails through two distinct
mechanisms: a supercritical budget and dilution by light atoms. In both unstable
regimes the limiting defect is computed exactly. The upper bounds follow from a
coarsening inequality with best possible constant, which also yields an
entropy-free
removal theorem, a finite combinatorial consequence for moments of the
representation function, and extensions to $B_h[g]$ sets. Matching constructions
show that the phase boundaries and rates are sharp.
\end{abstract}

\maketitle

\section{Introduction}\label{sec:intro}

Let $G$ be an abelian group and $h\ge2$ an integer, and for $A\subset G$ let
$\Uc_h(A)$ be the collection of multisets of size $h$ with elements in $A$, with
$s(u)$ the sum of $u$ counted with multiplicity. A set $B\subset G$ is a
\emph{$B_h$ set} if
\[
  a_1+\dots+a_h=b_1+\dots+b_h,\qquad a_i,b_i\in B,
\]
forces $\{a_1,\dots,a_h\}=\{b_1,\dots,b_h\}$ as multisets, that is, if the sum
map is injective on $\Uc_h(B)$. For $h=2$ these are the \emph{Sidon sets\nocorr}; finite
Sidon subsets of $\Z$ correspond, after translation, to Golomb rulers. For
$h=1$ the condition is vacuous, so
every set is a $B_1$ set, a convention we use in Lemma~\ref{lem:BC}. More
generally, $B$ is a \emph{$B_h[g]$ set} if no element of $G$ has more than $g$
representations as an unordered $h$-term sum from $B$; these generalized Sidon
sets are classical \cite{Cilleruelo,OBryant}. The main statements below are for
$B_h$ sets, that is $g=1$; the proofs are carried out uniformly for every fixed
$g$, and Section~\ref{sec:multiplicity} records the resulting $B_h[g]$ theorem;
only the exact dilution constant of Theorem~\ref{thm:phase}(4) is special to
$g=1$.

This paper is about the following weighted removal question. Give each point $a$
of a countable set $A\subset G$ a weight $p_a$, with $\sum_ap_a=1$, and let the
$h$-element multisets from $A$ carry the induced weights. If no two multisets
share a sum then $A$ is a $B_h$ set. Suppose instead that only a little weight
sits on colliding multisets:
\begin{center}
\emph{how much weight must be deleted from $A$ before all $h$-fold sums become
unique, and at what rate in the amount of collision?}
\end{center}
For the uniform weighting this asks for a large $B_h$ subset of a finite set, a
question with a long history \cite{ErdosTuran,KSS,OBryant}; for general weights it
is a weighted $B_h$-extraction problem.

Questions of this shape, in which approximate structure implies structure after a
small deletion, are the stability form of an extremal statement, and their
value lies in the exchange rate: how the amount of deletion scales in the amount
of approximation, and which functional measures the approximation. Here the
second half of that pair is where the structure hides, because two different
scales have to be fixed before the question is even well posed, and they need not
be measured in the same way.

\subsection{Two orders, not one}
Let $X_1,\dots,X_h$ be independent copies of a discrete random variable $X$ with
law $(p_a)$ and support $A$, and put
\[
  U_h=\{X_1,\dots,X_h\},\qquad S_h=X_1+\dots+X_h=s(U_h),
\]
so that $U_h$ is a random element of $\Uc_h(A)$ and $S_h=s(U_h)$ its sum; we keep
the script letter for the deterministic space and the italic letter for the random
multiset throughout. The sum map is injective on the support of $U_h$ precisely
when $\supp X$ is a $B_h$ set, so for
each R\'enyi order $\alpha>0$ the entropy loss
\begin{equation}\label{eq:defdelta}
  \Delta_{\alpha,h}(X):=H_\alpha(U_h)-H_\alpha(S_h)\ \ge\ 0
\end{equation}
measures weighted representation collisions and vanishes exactly on $B_h$
supports (Proposition~\ref{prop:basic}), under the finiteness hypotheses
recorded there. At $\alpha=2$ and
for the uniform weighting it is the logarithm of the normalized additive energy
of $A$ (Proposition~\ref{prop:energy}), and at $\alpha=1$ it is Shannon entropy
loss. Writing
\begin{equation}\label{eq:dh}
  \delta_h(X):=1-\sup\bigl\{\PR(X\in B): B\subset A\text{ is }B_h\bigr\}
\end{equation}
for the least weight that must be deleted, the removal question is: how small
must $\delta_h(X)$ be when $\Delta_{\alpha,h}(X)$ is small? For a concrete
instance take $h=2$, $A=\{0,1,2,3\}\subset\Z$ and $p_0=p_3=\frac12-\varepsilon$,
$p_1=p_2=\varepsilon$. Here $A$ is not Sidon, since $0+3=1+2$, but $\{0,1,3\}$
and $\{0,2,3\}$ are, so $\delta_2(X)=\varepsilon$ exactly; every collision in $A$
uses a light atom, and $\Delta_{\alpha,2}(X)\to0$ as $\varepsilon\downarrow0$.

Not by itself, however: without a constraint on how spread out $X$ is, one may
place a small collision on atoms of arbitrarily small weight and delete
arbitrarily little, or place it on atoms of comparable weight and be forced to
delete a lot. A budget is needed, and the natural budgets are again R\'enyi
entropies. The diagonal specialization places the budget and the deficit at
the same R\'enyi order; for the removal problem, however, these two roles are
logically distinct. Writing $P_r=P_r(X):=\sum_ap_a^{\,r}$ for the power sums of
the weights, let $\beta>0$ be a second, independent order and set
\begin{equation}\label{eq:Phi}
  \Phi_{\alpha,\beta,D,h}(C):=\sup_G\ \sup_{X\text{ on }G}
  \Bigl\{\delta_h(X):\
   \underbrace{\Delta_{\alpha,h}(X)\le C}_{\text{measured: order }\alpha},\ \
   \underbrace{H_\beta(X)\le D}_{\text{assumed: order }\beta}\Bigr\},
\end{equation}
the outer supremum running over all abelian groups; every upper bound below
holds for each fixed $G$. The inner supremum is over those $X$ for which the
deficit is defined, that is $M_\alpha(U_h)<\infty$ when $\alpha\ne1$ and
$H(X)<\infty$ when $\alpha=1$; neither is implied by $H_\beta(X)\le D$. We write $\Phi^{\Z}_{\alpha,\beta,D,h}$ for the
analogue of \eqref{eq:Phi} with $G$ fixed equal to $\Z$. So $\alpha$ belongs to
the quantity being measured and $\beta$ to the hypothesis being assumed: we call
$\alpha$ the \emph{loss order} and $\beta$ the \emph{budget order\nocorr}, and the
one-order specialization is the diagonal $\beta=\alpha$. In one sentence: separating the
loss and budget orders reveals the complete stability phase diagram for
weighted $B_h$-removal.

The polynomial and logarithmic laws will be determined on the range
\begin{equation}\label{eq:domain}
  0<\beta\le\alpha<\infty ,
\end{equation}
where the budget is at least as sensitive to light atoms as the deficit is, and
so controls the power sums the deficit is built from: for $\beta<1$ a bound on
$H_\beta$ bounds $P_\alpha$ by Lemma~\ref{lem:tail}(1), while for $\beta\ge1$ one
automatically has $P_\alpha\le1$ and the budget is spent instead on the tail mass
at $\beta=1$ and on a lower bound for $P_\beta$ at $\beta>1$. For $\alpha<\beta$
stability fails, and parts 3 and 4 of Theorem~\ref{thm:phase} determine the
failure exactly.

\subsection{The two-order phase diagram}
Put
\begin{equation}\label{eq:theta}
  \Theta_h(\alpha,\beta):=\min\Bigl\{\frac{1}{h\alpha},
    \frac{1-\beta}{h\alpha-\beta}\Bigr\}
  =\begin{cases}
    \dfrac{1}{h\alpha},&h\alpha\le1,\\[2mm]
    \dfrac{1-\beta}{h\alpha-\beta},&h\alpha\ge1,
   \end{cases}
\end{equation}
defined for $0<\beta<1$ and $\beta\le\alpha$; the denominator $h\alpha-\beta$ is
then positive, the two expressions agree exactly when $h\alpha=1$, where both
equal $1$, and the case distinction in \eqref{eq:theta} is a computation carried
out after \eqref{eq:thetacases} below. Part 4 of the theorem needs one more
quantity, the least possible largest atom under the budget,
\begin{equation}\label{eq:mbeta}
  m_\beta(D):=\inf\bigl\{\lVert p\rVert_\infty:\ p\text{ a probability vector},
   \ H_\beta(p)\le D\bigr\}\in(0,1] ,
\end{equation}
computed explicitly in Lemma~\ref{lem:mbeta}.

\begin{theorem}[Two-order phase diagram]\label{thm:phase}
Fix $h\ge2$ and $D>0$. The following four cases cover all $\alpha,\beta>0$.
\begin{enumerate}
\item \emph{(Subcritical budget: polynomial.)} If $\beta<1$ and $\alpha\ge\beta$
then there are $0<c_1\le c_2<\infty$ depending only on $\alpha,\beta,D,h$ with
\[
  c_1C^{\Theta_h(\alpha,\beta)}\ \le\ \Phi_{\alpha,\beta,D,h}(C)\ \le\
  c_2C^{\Theta_h(\alpha,\beta)}
\]
for all small $C>0$.
\item \emph{(Critical budget: logarithmic, with exact constant.)} If $\beta=1$
and $\alpha\ge1$, then
\[
  \lim_{C\downarrow0}\Phi_{\alpha,1,D,h}(C)\log\frac1C\;=\;(h\alpha-1)D .
\]
\item \emph{(Supercritical budget: an exact instability floor.)} If $\beta>1$ and
$\alpha\ge1$ then $\Phi_{\alpha,\beta,D,h}(C)\ge1-e^{-(\beta-1)D/\beta}$ for
every $C>0$, and
\[
  \Phi_{\alpha,\beta,D,h}(C)\ \le\ 1-e^{-(\beta-1)D/\beta}
   +O_{\alpha,\beta,D,h}\bigl(C^{\min\{1,\,(\beta-1)/(h\alpha-1)\}}\bigr) ;
\]
in particular $\lim_{C\downarrow0}\Phi_{\alpha,\beta,D,h}(C)
=1-e^{-(\beta-1)D/\beta}>0$, so no stability modulus exists, and the limiting
defect does not depend on the loss order $\alpha$.
\item \emph{(Dilution: a constant, exactly computed.)} If $0<\alpha<1$ and
$\beta>\alpha$ then, for \emph{every} $C>0$,
\[
  \Phi_{\alpha,\beta,D,h}(C)\;=\;1-m_\beta(D) ,
\]
with $m_\beta(D)$ as in \eqref{eq:mbeta}. In particular the extremal stability
function is constant there, and the constant depends neither on $\alpha$, nor on
$h$, nor on $C$.
\end{enumerate}
\end{theorem}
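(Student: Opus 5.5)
The plan is to prove each part by an upper bound coming from a single coarsening inequality, together with a matching explicit construction in $\Z$. The structural step comes first. I would show that for each $\alpha>0$ the deficit controls a weighted collision mass: if $B$ is obtained by a greedy deletion, then the deleted weight is bounded by a sum over colliding multisets of products $p_{a_1}\cdots p_{a_h}$, and $\Delta_{\alpha,h}(X)$ dominates that sum. For $\alpha>1$ I would use the convexity estimate $M_\alpha(U_h)\ge M_\alpha(S_h)$ with an explicit gap. Grouping the multisets by their sum $s$, the gap is at least a constant times $\sum_s(\text{second largest }h\text{-multiset weight at }s)^\alpha$. For $\alpha\le1$ I would use the reverse (subadditivity) version. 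The key observation is that deleting the lightest atom from each colliding pair of multisets costs at most $\min$ over the atoms involved. Combining this with $\min\le(\prod)^{1/h}$-type bounds gives
\[
\delta_h(X)\ \lesssim\ \sum_{\text{light atoms }a} p_a \quad\text{with}\quad \sum_a p_a^{h\alpha}\mathbf 1\{a\text{ involved}\}\lesssim C.
\]
Everything then reduces to optimizing $\sum p_a$ subject to $\sum p_a^{h\alpha}\le C$ and the budget $H_\beta\le D$. I would use Lemma~\ref{lem:tail} to convert the budget into a tail or power-sum bound.

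For part 1, the budget $\beta<1$ bounds $P_\beta$. I would split the atoms at a threshold $t$. Atoms with $p_a\le t$ contribute at most $t^{1-\beta}P_\beta$ by Hölder. Atoms with $p_a>t$ contribute at most $t^{1-h\alpha}C$. Choosing $t=C^{1/(h\alpha-\beta)}$ gives $C^{(1-\beta)/(h\alpha-\beta)}$ when $h\alpha\ge1$. When $h\alpha\le1$, the direct bound $\sum p_a\le(\sum p_a^{h\alpha})^{1/(h\alpha)}$ gives $C^{1/(h\alpha)}$. For the lower bounds I would take a Sidon "core" carrying most of the mass. I would attach $N$ light atoms of weight $t$ arranged in an arithmetic progression, so that every pair of light multisets collides while each collision is charged only $t^{h\alpha}$. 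The parameters $N,t$ are chosen to saturate both constraints. Here $N\approx t^{-\beta}$ saturates the budget, and the deficit is then $\approx Nt^{h\alpha}$.

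For part 2, at $\beta=1$ the Hölder step degenerates. The tail lemma gives $\PR(p_X\le t)\lesssim D/\log(1/t)$, and the threshold $t=C^{1/(h\alpha-1)}$ yields $(h\alpha-1)D/\log(1/C)$. The main obstacle is the exact constant. I would have to sharpen the coarsening step so that the constant in front is $1$ to first order; this is where the best-possible constant of the coarsening inequality is used. The matching construction puts mass $\varepsilon$ uniformly on $e^{D/\varepsilon}$ light atoms in a progression, with the remaining mass on one Sidon atom, and tunes $\varepsilon$ against $C$.

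For part 3, the floor comes from concentrating. I would take $1-\mu$ of the mass on one atom and spread $\mu$ over infinitely many vanishing atoms forming a progression, so the deficit tends to $0$. The budget $H_\beta\le D$ then permits $\mu$ up to $1-e^{-(\beta-1)D/\beta}$ in the limit, and almost all of that mass must be deleted. The upper bound combines $P_\beta\ge e^{-(\beta-1)D}$ with the part-1 threshold argument, which produces the stated error exponent. For part 4, with $\alpha<1$, dilution makes the deficit arbitrarily small. Splitting any $p$ with $\lVert p\rVert_\infty$ close to $m_\beta(D)$ into many tiny atoms keeps $H_\beta$ within budget since $\beta>\alpha$; this step needs care, and the limit in the number of atoms is the point to check. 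The resulting set is then essentially a progression, forcing deletion of all but one atom's worth. Conversely, one can always keep the largest atom, so $\delta_h\le1-\lVert p\rVert_\infty\le1-m_\beta(D)$.
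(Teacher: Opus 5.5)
\textbf{There are genuine gaps in the lower bounds.} Your upper-bound scheme is essentially the paper's: delete a lightest atom from every multiset that is not among the heaviest in its fibre, split at a threshold, and apply Lemma~\ref{lem:tail}. The remaining issues on that side are repairable.
\begin{itemize}
\item The coarsening bound must control the $\alpha$-mass of \emph{all} non-heaviest elements of each fibre (the telescoping in Theorem~\ref{thm:coarse}), not only the second largest.
\item For $\alpha<1$ the relative deficit controls that mass only once $M_\alpha(U_h)$ is bounded. This is where $\beta\le\alpha$ is used.
\item At $\beta=1$ your threshold $t=C^{1/(h\alpha-1)}$ leaves an $O(1)$ error term. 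You need something like $t=(C\log^2(1/C))^{1/(h\alpha-1)}$. The constant $(h\alpha-1)D$ then comes from the tail bound alone, and the optimality of $d_\alpha$ plays no role.
\item For $\beta>1$ there is no tail bound to reuse. The argument must go through $\sum_{p_a<\tau}p_a^\beta\le\tau^{\beta-1}$ and $\sum_{a\in B}p_a^\beta\le\PR(X\in B)^\beta$.
\end{itemize}

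\textbf{Parts 1--3: an arithmetic progression is the wrong light part.} A progression is not $B_2$, so next to the heavy atom $y$ the multisets $\{a_0,a_2,y^{h-2}\}$ and $\{a_1,a_1,y^{h-2}\}$ collide. For $h\ge3$ these collisions have weight $\asymp t^2$ rather than $t^h$. For every $h$, a progression of length $N$ produces $\gtrsim N^2$ colliding multisets rather than $\asymp N$; your count $Nt^{h\alpha}$ is what separated blocks give. Concretely:
\begin{itemize}
\item For $h\ge3$, even a fixed-length progression with $q\downarrow0$ gives only $\delta\asymp C^{1/(2\alpha)}$ in the branch $h\alpha\le1$.
\item With the budget saturated, $C\asymp(N^{1-\alpha}q^\alpha)^2\asymp N^{2-2\alpha/\beta}$. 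This does not tend to $0$ on the diagonal $\alpha=\beta$. Off the diagonal it gives the exponent $(1-\beta)/(2\alpha-2\beta)$ instead of $(1-\beta)/(h\alpha-\beta)$.
\item In part 2 your family gives the constant $2(\alpha-1)D$ for $\alpha>1$. At $\alpha=1$ it gives $0$, since the Shannon deficit is $\asymp\binom h2\varepsilon^2\log N\asymp\varepsilon D$.
\item In part 3 at $\alpha=1$ the Shannon deficit grows like $\mu^2\log N$ instead of vanishing.
\end{itemize}
The missing idea is the separated-block template of Example~\ref{ex:blocks}. Take $N$ base-$B$-separated copies of a \emph{fixed} block $F$ that is $B_{h-1}$ but not $B_h$. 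Then every collision consists of $h$ atoms of a single block, and $\Lc_{\alpha,1}(s)=Nr^{h\alpha}\Rc_{\alpha,h}(F)$ exactly for $\alpha\ne1$. For the exact constants you also need blocks that are $B_{h-1}$ yet have $B_h$-density tending to $0$ (Lemma~\ref{lem:BC}), letting $N\to\infty$ first and $\lvert F\rvert\to\infty$ afterwards. For $h=2$ the paper's blocks are short progressions $\{1,\dots,i\}$; what matters there is that they are fixed and separated. For $h\ge3$ no progression provides the combination of $B_{h-1}$ and vanishing $B_h$-density.

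\textbf{Part 4.} Your ceiling $\delta_h\le1-\lVert p\rVert_\infty\le1-m_\beta(D)$ is Proposition~\ref{prop:ceiling}, but the matching lower bound fails as proposed.
\begin{itemize}
\item Splitting a near-extremal $p$ into many tiny atoms raises $H_\beta$. What works is to keep a core with weights $v_m$, $H_\beta(v_m)<D$, intact. You then add dust of vanishing total mass $q$ on $N$ atoms at negligible budget cost, with $\Psi=N^{1-\alpha}q^\alpha\to\infty$; this is possible because $\alpha<\min\{1,\beta\}$. Then $M_\alpha(U_h)\gtrsim\Psi^h$ while the fibre defect is $O(\Psi^{h-2})$ (Lemma~\ref{lem:dust}).
\item The dust must be a $B_h$ set in a separate coordinate. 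If it formed a progression, dust collisions would give $M_\alpha(S_h)\lesssim\Psi$, hence $M_\alpha(U_h)/M_\alpha(S_h)\gtrsim\Psi^{h-1}\to\infty$. The deficit would then blow up instead of vanishing.
\item More fundamentally, a core placed in $\Z$ cannot force deletion of all but its heaviest atom. Every two-point subset of a torsion-free group is $B_h$, so there $\delta_h(X)\le1-p_{(1)}-p_{(2)}$.
\end{itemize}
The paper gets $\mu_1(v_m)=\lVert v_m\rVert_\infty$ by putting the core in $(\Z/h\Z)^r$, where $ha=hb$ for all $a,b$. So part 4 cannot come from the matching construction in $\Z$ you plan. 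Whether $\Phi^{\Z}$ attains $1-m_\beta(D)$ in this range is open (Question~\ref{q:Zfloor}).
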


In summary, reading the loss order $\alpha$ and the budget order $\beta$ off
\eqref{eq:Phi}:
\begin{center}
\begin{tabular}{@{}llll@{}}
\hline
loss $\Delta_{\alpha,h}\le C$ & budget $H_\beta\le D$
  & $\Phi_{\alpha,\beta,D,h}(C)$ & regime\\
\hline
$\alpha\ge\beta$ & $\beta<1$ & $\asymp C^{\Theta_h(\alpha,\beta)}$
  & stable: polynomial\\
$\alpha\ge1$ & $\beta=1$ & $\sim(h\alpha-1)D/\log\frac1C$
  & stable: logarithmic\\
$\alpha\ge1$ & $\beta>1$ & $\to1-e^{-(\beta-1)D/\beta}$
  & unstable: supercritical\\
$\alpha<1$ & $\beta>\alpha$ & $=1-m_\beta(D)$, all $C>0$
  & unstable: dilution\\
\hline
\end{tabular}
\end{center}
The rows cover the quadrant: for $\alpha<1$ either $\beta\le\alpha$ (row 1) or
$\beta>\alpha$ (row 4), and for $\alpha\ge1$ the cases $\beta<1$, $\beta=1$,
$\beta>1$ are rows 1--3. The interior boundary $h\alpha=1$ deliberately does not
appear: it changes the exponent $\Theta_h$ within row 1 and does not affect
stability, which is governed by $\beta\le1$ and $\alpha\ge\beta$. Row 4 is where
the two orders genuinely part company; its proof is of a different nature
(Remark~\ref{rem:offdiagonal}), and it is the only place where the ambient group
matters (Remark~\ref{rem:torsion}).

Row 4 says something sharper than the mere failure of stability. Since
$\Delta_{\alpha,h}(X)=0$ forces $\supp X$ to be a $B_h$ set by
Proposition~\ref{prop:listzero}, one always has $\Phi_{\alpha,\beta,D,h}(0)=0$,
whereas $m_\beta(D)<1$ whenever $D>0$ by Lemma~\ref{lem:mbeta}. So on the whole
region $\alpha<1$, $\beta>\alpha$ the extremal stability function jumps at the
origin: an arbitrarily small positive R\'enyi loss can coexist with a structural
defect that is as large as the budget permits.

Parts 2 and 3 hold already over the integers, since every construction in the
proofs is integer-valued. We record this separately because the lower bound in
part 2 is combinatorial rather than entropic, resting on thick integer $B_{h-1}$
sets from the classical Bose--Chowla construction \cite{BoseChowla}.

\begin{theorem}[Exact constants over $\Z$]\label{thm:exactconst}
For every $h\ge2$ and $D>0$,
\begin{align*}
  \lim_{C\downarrow0}\ \Phi^{\Z}_{\alpha,1,D,h}(C)\,\log\frac1C
   &=(h\alpha-1)D &&(\alpha\ge1),\\
  \lim_{C\downarrow0}\ \Phi^{\Z}_{\alpha,\beta,D,h}(C)
   &=1-e^{-(\beta-1)D/\beta} &&(\alpha\ge1,\ \beta>1).
\end{align*}
Part 1 of Theorem~\ref{thm:phase} also holds verbatim for $\Phi^{\Z}$.
\end{theorem}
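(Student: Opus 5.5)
The plan is to split the statement into upper and lower bounds. The upper bounds come for free: $\Phi^{\Z}\le\Phi$, since restricting the outer supremum in \eqref{eq:Phi} to $G=\Z$ can only decrease it. So the upper halves of parts 1--3 of Theorem~\ref{thm:phase} give $\limsup_{C\downarrow0}\Phi^{\Z}_{\alpha,1,D,h}(C)\log\frac1C\le(h\alpha-1)D$, $\limsup_{C\downarrow0}\Phi^{\Z}_{\alpha,\beta,D,h}(C)\le1-e^{-(\beta-1)D/\beta}$, and the upper bound $c_2C^{\Theta_h}$ in the polynomial regime. What remains is to show that every extremal construction used for the lower bounds in Theorem~\ref{thm:phase} can be realized as an integer-valued random variable with the same values of $\Delta_{\alpha,h}$, $H_\beta$ and $\delta_h$.

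For this I would use a single transfer principle. The quantities $\Delta_{\alpha,h}(X)$, $H_\beta(X)$ and $\delta_h(X)$ depend only on the weights and on the additive relations of the form $s(u)=s(v)$ with $u,v\in\Uc_h(\supp X)$. Suppose a construction lives on a finite subset $A$ of some group $G$ and only uses relations among $h$-fold sums. Then a Freiman isomorphism of order $h$ from $A$ into $\Z$ preserves all three quantities.

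If the construction lives in a torsion-free group such as $\Z^k$, the linear map $x\mapsto\sum_i x_iN^{i-1}$ with $N$ large is a Freiman isomorphism of every order on finite sets. This reduces everything to finitely supported constructions in $\Z^k$.

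The Bose--Chowla ingredient needs separate handling. The thick $B_{h-1}$ sets live in $\Z/(q^{h-1}-1)\Z$, but the standard lift to $\{0,\dots,q^{h-1}-2\}\subset\Z$ stays $B_{h-1}$, since an integer identity reduces to a modular one. The paper already notes that this step is what is used in part 2.

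I then go through the lower-bound constructions one at a time. For part 1, the construction is a heavy block plus a light colliding gadget, for example a short arithmetic progression $\{0,1,\dots\}$ carrying small mass, combined with a uniform or geometric bulk on a Sidon-type integer set that is spread out enough to meet $H_\beta\le D$. For part 3, the construction is one heavy atom of mass $e^{-(\beta-1)D/\beta}$, and the remaining mass is placed on a configuration forced to lose at least that much weight. For part 2, I use a near-uniform measure on a Bose--Chowla set scaled to entropy $D$, whose $h$-fold collisions are rare but cannot be removed cheaply.

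Part 4 is excluded from the theorem because it is the torsion-sensitive case (Remark~\ref{rem:torsion}), so it needs no transfer.

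The main obstacle is the constructions with countable support, where an exact constant is reached only in a limit. Examples are light-atom tails with $H(X)<\infty$ at $\beta=1$, and the infinite-support extremizers of the budget at $\beta>1$. For these I would first truncate to finite support, approximating the entropy constraint and the value of $\delta_h$ within an arbitrary $\eta>0$ while keeping $\Delta_{\alpha,h}\le C$. This uses continuity of $H_\beta$ and of power sums under truncation, which Lemma~\ref{lem:tail} supplies. I would then apply the Freiman embedding and let $\eta\to0$.

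One point must be checked: truncation must not decrease $\delta_h$ by more than $\eta$. This holds because deleting mass $\eta$ changes $\PR(X\in B)$ by at most $\eta$ for every $B$, after renormalization, with a $1+O(\eta)$ factor.
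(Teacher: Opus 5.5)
\textbf{There is a gap in the lower bounds.}

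Your upper-bound half matches the paper: $\Phi^{\Z}\le\Phi$, and Theorem~\ref{thm:general} holds in every group. Your Freiman transfer is also sound, since an order-$h$ Freiman isomorphism preserves the fibre structure of $s$ on $\Uc_h(A)$ and hence $\Delta_{\alpha,h}$, $H_\beta$ and $\delta_h$. The paper's base-$B$ digit encodings are an instance of it.

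The problem is that you cannot transfer ``the extremal constructions of Theorem~\ref{thm:phase}'' without exhibiting them. That theorem only asserts a supremum over all groups, and in part~4 the supremum is realized by a torsion construction with no known integer counterpart (Remark~\ref{rem:torsion}). Moreover, the lower halves of Theorem~\ref{thm:phase}(2),(3) are themselves \emph{deduced from} the integer constructions, so those constructions are the whole content of the statement. The ones you sketch do not attain the constants. The truncation step you call the main obstacle is a non-issue, since all needed constructions are finitely supported. As stated it is also unjustified: you give no argument that $\Delta_{\alpha,h}$ stays below $C$ after truncation and renormalization, and for $\alpha<1$ discarding light atoms can raise it.

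\emph{Part~2.} A Bose--Chowla set does not have rare $h$-fold collisions. By the pigeonhole count behind Lemma~\ref{lem:Fbound}, a $B_{h-1}$ set of size $n$ in $[1,n^{h-1}]$ has $\asymp n^h$ colliding $h$-multisets. Putting light mass $q\sim D/\log n$ uniformly on it, beside a heavy atom, gives Shannon deficit $\asymp q^h\log n$. Then $\log(1/C)\asymp\log\log n$ and $\delta\log(1/C)\to0$. For $\alpha>1$ one gets $C=n^{-(h-1)(\alpha-1)+o(1)}$, hence the constant $(h-1)(\alpha-1)D<(h\alpha-1)D$.

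The missing idea is the separated-block template of Example~\ref{ex:blocks}. Take $N$ base-$B$-separated copies of one \emph{fixed} block $F$ that is $B_{h-1}$ but not $B_h$, carrying total mass $q$ beside a heavy atom. Every collision then stays inside one block and has weight $\asymp r^h$, with $r=q/(\lvert F\rvert N)$. Saturating $H=D$ gives $q_N\sim D/\log N$ and $C_N\sim K_FN^{-(h\alpha-1)}(\log N)^{-h\alpha}$, so $\delta\log(1/C_N)\to\gamma(h\alpha-1)D$. Only \emph{afterwards} does $F$ run through the Bose--Chowla sequence, where $\gamma=1-b_h(F)/\lvert F\rvert\to1$ by Lemmas~\ref{lem:Fbound} and~\ref{lem:BC}. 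Finally one passes from $(C_N)$ to all small $C$ using $C_{N+1}/C_N\to1$ and the monotonicity of $\Phi^{\Z}$.

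\emph{Part~3.} This is the same template with $q=1-e^{-(\beta-1)D/\beta}$ frozen and $N\to\infty$. Your sketch does not say how the deficit is made to vanish. A single large block would not work: at $\alpha=1$ its Shannon deficit grows like $q^h\log n$.

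\emph{Part~1, $h\alpha>1$.} One light gadget of mass $\varepsilon$ yields only $\delta\asymp C^{1/(h\alpha)}$, which is $o\bigl(C^{(1-\beta)/(h\alpha-\beta)}\bigr)$ by \eqref{eq:thetacases}. A $B_h$ bulk cannot help, because for $\alpha\ge\beta$ the budget caps $M_\alpha(U_h)$. Again you need $N\to\infty$ copies with $q_N\asymp N^{-(1-\beta)/\beta}$.

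\emph{Part~1, $h\alpha\le1$.} Even here the gadget must be $B_{h-1}$. With your progression $\{0,1,2\}$ and $h\ge3$, the multisets $\{y^{h-2},0,2\}$ and $\{y^{h-2},1,1\}$ collide with weight $\asymp\varepsilon^2$, which lowers the exponent to $1/(2\alpha)$.
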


\begin{corollary}[Complete classification]\label{cor:classify}
Fix $h\ge2$ and $D>0$. Then
\[
  \Phi_{\alpha,\beta,D,h}(C)\longrightarrow0
  \quad\text{as }C\downarrow0
  \qquad\Longleftrightarrow\qquad
  \beta\le1\ \text{ and }\ \alpha\ge\beta ,
\]
and on that region the rate is the one given by Theorem~\ref{thm:phase},
sharply. The same classification holds for every fixed $g\ge1$; see
Theorem~\ref{thm:phaseg}.
\end{corollary}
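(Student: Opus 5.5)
The corollary should follow directly from Theorem~\ref{thm:phase}, by checking which of its four cases apply in each part of the positive quadrant. The plan has two halves. First, show that $\Phi_{\alpha,\beta,D,h}(C)\to0$ on the claimed region. Second, show that the limit, which exists because $\Phi$ is nondecreasing in $C$, is strictly positive everywhere else.

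For the region $\beta\le1$, $\alpha\ge\beta$, I split into $\beta<1$ and $\beta=1$.
\begin{itemize}
\item If $\beta<1$ and $\alpha\ge\beta$, part 1 gives $\Phi_{\alpha,\beta,D,h}(C)\le c_2C^{\Theta_h(\alpha,\beta)}$ for small $C$. Both entries of the minimum in \eqref{eq:theta} are positive: $1/(h\alpha)>0$, and $(1-\beta)/(h\alpha-\beta)>0$ because $\beta<1$ and $h\alpha-\beta>0$. Hence $\Theta_h(\alpha,\beta)>0$ and $\Phi\to0$.
\item If $\beta=1\le\alpha$, part 2 gives $\Phi_{\alpha,1,D,h}(C)=(h\alpha-1)D/\log\frac1C\,(1+o(1))$, which tends to $0$.
\end{itemize}
In both cases the matching lower bounds, $c_1C^{\Theta_h}$ in part 1 and the exact constant in part 2, show that these rates are sharp. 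This is the second clause of the corollary.

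For the converse, suppose the pair lies outside the region, so that $\beta>1$ or $\alpha<\beta$. I split into three cases.
\begin{itemize}
\item If $\beta>1$ and $\alpha\ge1$, part 3 gives $\lim_{C\downarrow0}\Phi=1-e^{-(\beta-1)D/\beta}$. This is strictly positive because $D>0$.
\item If $\beta>1$ and $\alpha<1$, then $\beta>\alpha$, so part 4 applies and gives $\Phi\equiv1-m_\beta(D)$.
\item If $\beta\le1$ and $\alpha<\beta$, then again $\alpha<1$ and $\beta>\alpha$, so part 4 applies. This includes the boundary case $\beta=1>\alpha$.
\end{itemize}
Lemma~\ref{lem:mbeta} gives $m_\beta(D)<1$ for $D>0$, so in the last two cases the limit $1-m_\beta(D)$ is strictly positive. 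These three cases exhaust the complement of $\{\beta\le1,\ \alpha\ge\beta\}$, which proves the equivalence.

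For general $g\ge1$ I would run the same case analysis using Theorem~\ref{thm:phaseg} in place of Theorem~\ref{thm:phase}. The only point needing care is the dilution regime, since the exact constant $1-m_\beta(D)$ is special to $g=1$. There I would check that Theorem~\ref{thm:phaseg} still supplies a strictly positive $C$-independent lower bound. If it does not state one explicitly, I would produce it by reusing the $g=1$ dilution construction: one heavy atom together with many light atoms arranged to realise collisions. Its structural defect stays bounded below even when up to $g$ representations are allowed, because a $B_h[g]$ subset can retain only boundedly many of the colliding light configurations. I expect this verification for $g>1$ to be the only non-bookkeeping step. Everything else is a direct reading of the four cases.
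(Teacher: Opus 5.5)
Your case analysis is the paper's proof. You use parts (1)--(2) of Theorem~\ref{thm:phase} on the stable region, part (3) for $\beta>1$, $\alpha\ge1$, and part (4) on $\{\alpha<1,\ \beta>\alpha\}$, which your last two cases together exhaust. For general $g$ the paper likewise just reads off Theorem~\ref{thm:phaseg}. Its part (4) states the $C$-independent floor explicitly, and that floor is Corollary~\ref{cor:dilZ}, so your contingency is never triggered.

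That is fortunate, because the fallback you sketch would fail on the part $\alpha<\beta\le1$ of the dilution regime. There Lemma~\ref{lem:tail}(1),(2) bounds the mass of atoms of weight below $\tau$ by $e^{(1-\beta)D}\tau^{1-\beta}$, resp.\ $D/\log(1/\tau)$. So collisions realised among many light atoms cost only $o(1)$ to delete, whatever $g$ is. The actual dilution mechanism of Example~\ref{ex:dust} is the reverse of what you describe. The collisions sit in a core of fixed positive weight; in Corollary~\ref{cor:dilZ} this is a non-$B_h[g]$ block $F$ from Lemma~\ref{lem:BC} with one dominant atom and weight $\varepsilon$ on the others. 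The many light atoms are placed in general position and create no collisions at all. They serve only to inflate $M_\alpha(U_h)$, so that the deficit tends to $0$ while the deletion cost stays fixed.
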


\begin{proof}
If $\beta\le1$ and $\alpha\ge\beta$ then $\Phi\to0$ by
Theorem~\ref{thm:phase}(1),(2). Otherwise either $\beta>1$ with $\alpha\ge1$,
and then $\Phi\ge1-e^{-(\beta-1)D/\beta}>0$ by Theorem~\ref{thm:phase}(3), or
else $\alpha<1$ and $\beta>\alpha$, and then
$\Phi_{\alpha,\beta,D,h}(C)=1-m_\beta(D)>0$ for every $C>0$ by
Theorem~\ref{thm:phase}(4). The statement for general $g$ follows the same way
from Theorem~\ref{thm:phaseg} and Corollary~\ref{cor:dilZ}.
\end{proof}

Three lines determine stability and its rate: $h\alpha=1$ changes the exponent,
$\beta=1$ ends uniform control of light-atom mass, and $\alpha=\beta$ below order
one lets light dust dilute the deficit. The last two bound the stability region
$\{\beta\le1,\ \alpha\ge\beta\}$ of Corollary~\ref{cor:classify} and the first
lies inside it. A fourth line, $\alpha=1$ for $\beta>1$, determines no rate but
separates the two unstable mechanisms. The first two are parameter-separable, $h\alpha=1$ depending only on the
loss order and $\beta=1$ only on the budget order, whereas $\alpha=\beta$ compares
the two orders directly; on the diagonal they collapse to $\alpha=1/h$ and
$\alpha=1$, which is the whole content of the one-order problem.
Figure~\ref{fig:diagram} shows the four regions and all four lines.

\begin{figure}[!ht]
\centering
\setlength{\unitlength}{1mm}
\begin{picture}(110,62)(-10,-9)
  \put(0,0){\vector(1,0){100}}
  \put(0,0){\vector(0,1){50}}
  \put(98,-6){$\alpha$}
  \put(-4,47){$\beta$}
  \thicklines
  \put(0,0){\line(1,1){32}}
  \put(32,32){\line(1,0){62}}
  \thinlines
  \multiput(32,34)(0,4){4}{\line(0,1){2}}
  \multiput(11,1)(0,3.5){3}{\line(0,1){1.8}}
  \put(32,-1){\line(0,1){2}}   \put(31,-6){$1$}
  \put(13,4){{\footnotesize$h\alpha=1$}}
  \put(-1,32){\line(1,0){2}}   \put(-5,31){$1$}
  \put(1,43){{\footnotesize\textsc{iv}: $\Phi\!\equiv\!1-m_\beta(D)$}}
  \put(42,43){{\footnotesize\textsc{iii}: $\Phi\to1-e^{-(\beta-1)D/\beta}$}}
  \put(42,36){{\footnotesize\textsc{ii}: $\Phi\sim(h\alpha-1)D/\log\tfrac1C$}}
  \put(56,35){\vector(0,-1){2.5}}
  \put(46,14){{\footnotesize\textsc{i}: $\Phi\asymp C^{\Theta_h(\alpha,\beta)}$}}
  \put(8,22){{\footnotesize\textsc{iv}}}
\end{picture}
\caption{The two-order phase diagram of Theorem~\ref{thm:phase}. The stable
region $\{\beta\le1,\ \alpha\ge\beta\}$ consists of \textsc{i}, including its
diagonal edge $\alpha=\beta<1$, together with the ray \textsc{ii}; the thick
lines are its boundary. The two dashed transitions are $h\alpha=1$, interior to
\textsc{i}, and $\alpha=1$, $\beta>1$, separating \textsc{iii} from \textsc{iv}.
Both axes carry the same scale, so the diagonal has slope $1$; the position of
$h\alpha=1$ depends on $h$ and is schematic.}\label{fig:diagram}
\end{figure}

\begin{corollary}[the diagonal $\beta=\alpha$]\label{cor:diagonal}
Fix $h\ge2$ and $D>0$ and write $\Phi_{\alpha,D,h}=\Phi_{\alpha,\alpha,D,h}$.
\begin{enumerate}
\item For $0<\alpha<1$, $\Phi_{\alpha,D,h}(C)\asymp C^{\theta_h(\alpha)}$ with
$\theta_h(\alpha)=\Theta_h(\alpha,\alpha)
=\min\bigl\{\frac{1}{h\alpha},\frac{1-\alpha}{(h-1)\alpha}\bigr\}$, the two
branches exchanging at $\alpha=1/h$.
\item $\lim_{C\downarrow0}\Phi_{1,D,h}(C)\log\frac1C=(h-1)D$.
\item For $\alpha>1$,
$\lim_{C\downarrow0}\Phi_{\alpha,D,h}(C)=1-e^{-(\alpha-1)D/\alpha}>0$.
\end{enumerate}
\end{corollary}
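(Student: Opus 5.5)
This corollary is the specialization $\beta=\alpha$ of Theorem~\ref{thm:phase}, so the plan is to check, for each range of $\alpha$, which part of the theorem applies on the diagonal. Part 4 never applies, since it requires $\beta>\alpha$. Once the correct part is identified, the only remaining work is to evaluate $\Theta_h$ on the diagonal and check where its two branches exchange.

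For part (1), take $0<\alpha<1$ and $\beta=\alpha$. Then $\beta<1$ and $\alpha\ge\beta$ both hold, so Theorem~\ref{thm:phase}(1) gives $\Phi_{\alpha,D,h}(C)\asymp C^{\Theta_h(\alpha,\alpha)}$. By \eqref{eq:theta},
\[
\Theta_h(\alpha,\alpha)=\min\Bigl\{\frac{1}{h\alpha},\frac{1-\alpha}{h\alpha-\alpha}\Bigr\}
=\min\Bigl\{\frac1{h\alpha},\frac{1-\alpha}{(h-1)\alpha}\Bigr\}.
\]
To locate the exchange of branches, compare $\frac1{h\alpha}$ with $\frac{1-\alpha}{(h-1)\alpha}$. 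After multiplying through by the positive quantity $h(h-1)\alpha$, this becomes a comparison of $h-1$ with $h(1-\alpha)$, and these are equal exactly when $h\alpha=1$. This agrees with the case split in \eqref{eq:theta} at $h\alpha=1$: the first branch is the minimum for $\alpha\le1/h$ and the second for $\alpha\ge1/h$.

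For part (2), take $\alpha=\beta=1$. This is Theorem~\ref{thm:phase}(2) with $\alpha=1$, and the constant $(h\alpha-1)D$ becomes $(h-1)D$.

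For part (3), take $\alpha=\beta>1$. Then $\alpha\ge1$ and $\beta>1$, so Theorem~\ref{thm:phase}(3) applies. It gives the limit $1-e^{-(\alpha-1)D/\alpha}$, which is positive because $D>0$ and $\alpha>1$.

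There is no real obstacle in this argument; the step most prone to error is the algebra locating the exchange point at $\alpha=1/h$.
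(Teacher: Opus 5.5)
Your proposal is correct and matches the paper's proof. The paper's argument is likewise the substitution $\beta=\alpha$ into Theorem~\ref{thm:phase}, using $\frac{1-\alpha}{h\alpha-\alpha}=\frac{1-\alpha}{(h-1)\alpha}$ and reading $h\alpha=1$ as $\alpha=1/h$ in part 1, with parts 2 and 3 immediate; your explicit check of where the two branches exchange simply spells out that step in more detail.
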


\begin{proof}
Substitute $\beta=\alpha$ in Theorem~\ref{thm:phase}: in part 1,
$\frac{1-\alpha}{h\alpha-\alpha}=\frac{1-\alpha}{(h-1)\alpha}$ and $h\alpha=1$
reads $\alpha=1/h$; parts 2 and 3 are immediate.
\end{proof}

Part 2 is worth isolating: \emph{a Shannon budget restores stability at every
R\'enyi order $\alpha\ge1$, at a logarithmic rate whose constant is exact.} In
part 3 the supercritical range, usually described only by the failure of
stability, acquires an exact limiting defect.

\subsection{An entropy-free removal theorem, and its finite form}
The boundary $h\alpha=1$ is not an artefact of any entropy normalization. It is
already present in a removal theorem that mentions no entropy at all, only power
sums of the weights, and in which $\beta$ therefore does not appear. Let $f$ be
any map on $\Uc_h=\Uc_h(A)$, write $w_u=\PR(U_h=u)$, let $g\ge1$, and put
\begin{equation}\label{eq:Lg}
  \Lc_{\alpha,g}(f):=\inf_{\chi:\Uc_h\to\{1,\dots,g\}}
   \Bigl\lvert\,\sum_{u}w_u^\alpha
   -\sum_{(z,c)}\Bigl(\sum_{\substack{u\,:\,f(u)=z\\ \chi(u)=c}}w_u
     \Bigr)^{\!\alpha}\Bigr\rvert ,
\end{equation}
the least $\alpha$-power-sum loss incurred by observing $f$ together with $g$
auxiliary labels. Call $B\subset A$ \emph{$(f,g)$-admissible} if every fibre of
$f$ meets $\Uc_h(B)$ in at most $g$ multisets, and set
$\delta_{f,g}(X)=1-\sup\{\PR(X\in B):B\text{ is }(f,g)\text{-admissible}\}$;
for $f=s$ and $g=1$ this is $\delta_h$.

\begin{theorem}[Entropy-free moment removal]\label{thm:rawmoment}
Let $h\ge2$, $g\ge1$, let $f$ be any map on $\Uc_h$, let $0<\alpha\le1/h$, and
suppose $\sum_uw_u^\alpha<\infty$. Then, with
$d_\alpha:=\lvert2-2^\alpha\rvert$,
\[
  \delta_{f,g}(X)\ \le\
  \Bigl(\frac{\Lc_{\alpha,g}(f)}{d_\alpha}\Bigr)^{1/(h\alpha)} .
\]
\end{theorem}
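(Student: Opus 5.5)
The plan is to turn the power-sum loss into a weighted count of ``redundant'' multisets, and then delete one light atom from each redundant multiset.

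Fix a labelling $\chi:\Uc_h\to\{1,\dots,g\}$ whose loss in \eqref{eq:Lg} is within $\varepsilon$ of $\Lc_{\alpha,g}(f)$; at the end we let $\varepsilon\downarrow0$. The pairs $(z,c)$ partition $\Uc_h$ into classes $K_{z,c}=\{u: f(u)=z,\ \chi(u)=c\}$. Since $0<\alpha\le 1/h<1$, the map $t\mapsto t^\alpha$ is concave and subadditive. Hence every class satisfies $\bigl(\sum_{u\in K}w_u\bigr)^\alpha\le\sum_{u\in K}w_u^\alpha$, and the absolute value in \eqref{eq:Lg} is simply the sum of the nonnegative per-class losses. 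For countable classes this follows from the finite case by monotone limits, using $\sum_u w_u^\alpha<\infty$.

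\emph{Key inequality.} For $x\ge y\ge0$ we have $x^\alpha+y^\alpha-(x+y)^\alpha\ge d_\alpha y^\alpha$. Write $r=x/y\ge1$. The function $r\mapsto 1+r^\alpha-(1+r)^\alpha$ has derivative $\alpha\bigl(r^{\alpha-1}-(1+r)^{\alpha-1}\bigr)>0$, so it is minimized at $r=1$, where it equals $2-2^\alpha=d_\alpha$. Now list the members of a class in decreasing order of weight, $w_{(1)}\ge w_{(2)}\ge\cdots$, and add them one at a time. At step $k\ge2$ the new term $w_{(k)}$ is at most the running partial sum. Concavity of $t^\alpha$ also gives $(s+y)^\alpha-s^\alpha\le(x+y)^\alpha-x^\alpha$ for $s\ge x$, so the key inequality still applies with $x$ replaced by the partial sum $s$. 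Telescoping over the steps gives
\[
\sum_{u\in K}w_u^\alpha-\Bigl(\sum_{u\in K}w_u\Bigr)^\alpha\ \ge\ d_\alpha\sum_{u\in K\setminus\{u_K^*\}}w_u^\alpha ,
\]
where $u_K^*$ is a heaviest member of $K$ (it exists because $w_u\to0$). Let $E$ be the set of all non-heaviest multisets over all classes. Summing over classes yields $\sum_{u\in E}w_u^\alpha\le(\Lc_{\alpha,g}(f)+\varepsilon)/d_\alpha$.

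\emph{Deletion.} For each $u\in E$, let $a_u$ be an element of $u$ of least weight, and put $B=A\setminus\{a_u:u\in E\}$. Then $\Uc_h(B)$ avoids $E$, so each class meets $\Uc_h(B)$ in at most its heaviest element. Each fibre of $f$ is a union of at most $g$ classes, so it meets $\Uc_h(B)$ in at most $g$ multisets, and $B$ is $(f,g)$-admissible. For the weights, $w_u$ is a multinomial coefficient times $\prod_{a\in u}p_a\ge p_{a_u}^h$, so $p_{a_u}\le w_u^{1/h}$. By a union bound,
\[
\delta_{f,g}(X)\ \le\ \sum_{u\in E}w_u^{1/h}\ \le\ \Bigl(\sum_{u\in E}w_u^{\alpha}\Bigr)^{1/(h\alpha)} .
\]
The last step is the embedding $\ell^{q}\subset\ell^1$ with $\lVert x\rVert_1\le\lVert x\rVert_q$ for $q=h\alpha\le1$, applied to $x_u=w_u^{1/h}$. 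This is exactly where the hypothesis $\alpha\le1/h$ enters. Letting $\varepsilon\downarrow0$ gives the claimed bound.

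The main obstacle is the multi-element class bound in the key inequality, namely that the loss dominates $d_\alpha$ times all but the largest $\alpha$-power. This needs the partial-sum monotonicity argument above and care with countably infinite classes, which I would handle by truncation and limits. The remaining steps are routine.
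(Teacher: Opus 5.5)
Your proof is correct and is essentially the paper's argument: the two-atom inequality with telescoping (Steps 1--2 of Theorem~\ref{thm:coarse}), deletion of a least-weight element from each redundant multiset (Lemma~\ref{lem:deletion}), and subadditivity of $t\mapsto t^{h\alpha}$ combined with $w_u\ge m_u^h$. The only difference is in which multisets survive. You keep the class maxima of an $\varepsilon$-optimal labelling and delete from the $\chi$-dependent set $E$. The paper instead deletes from the labelling-free set $I_g$ of non-top-$g$ multisets in each fibre, and uses Step~3 of Theorem~\ref{thm:coarse} to get $T_{\alpha,g}(f)\le\Lc_{\alpha,g}(f)/d_\alpha$ for every labelling at once, so it needs no $\varepsilon\downarrow0$ limit; both routes work.
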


The theorem requires neither an entropy constraint nor a group structure. The exponent $1/(h\alpha)$ is the
$h\alpha\le1$ branch of $\Theta_h$, and the hypothesis $h\alpha\le1$ is exactly
what the proof needs, so $h\alpha=1$ is the critical boundary of the bare
weighted removal problem.

Specialized to uniform weights, Theorem~\ref{thm:rawmoment} becomes a statement
about moments of the representation function. For finite $A$ with
$\lvert A\rvert=n$ let $\nu_u$ be the number of ordered $h$-tuples realizing the
multiset $u$, so that $\sum_{u:s(u)=z}\nu_u=r_h(z)$ is the number of ordered
representations of $z$, and put
\begin{equation}\label{eq:Rg}
  \Rc^{[g]}_{\alpha,h}(A):=\inf_{\chi}\Bigl\lvert
   \sum_{u\in\Uc_h(A)}\nu_u^\alpha
   -\sum_{(z,c)}\Bigl(\sum_{\substack{u\,:\,s(u)=z\\ \chi(u)=c}}\nu_u
     \Bigr)^{\!\alpha}\Bigr\rvert ,
\end{equation}
the $g$-split moment deficit of $A$. For $g=1$ no splitting is possible and
\eqref{eq:Rg} is the plain moment gap
$\bigl\lvert\sum_u\nu_u^\alpha-\sum_zr_h(z)^\alpha\bigr\rvert$; we then write
$\Rc_{\alpha,h}=\Rc^{[1]}_{\alpha,h}$. Let $b_{h,g}(A)$ be the largest size of a
$B_h[g]$ subset of $A$, and $b_h=b_{h,1}$.

\begin{corollary}[Moment removal for finite sets]\label{cor:finitemoment}
Let $A\subset G$ be finite with $\lvert A\rvert=n$, let $h\ge2$, $g\ge1$ and
$0<\alpha\le1/h$. Then
\[
  n-b_{h,g}(A)\ \le\
  \Bigl(\frac{\Rc^{[g]}_{\alpha,h}(A)}{\lvert2-2^\alpha\rvert}
  \Bigr)^{1/(h\alpha)},
\]
or equivalently
\[
  \Rc^{[g]}_{\alpha,h}(A)\ \ge\ \lvert2-2^\alpha\rvert
   \bigl(n-b_{h,g}(A)\bigr)^{h\alpha} .
\]
At the critical order $\alpha=1/h$ the exponent is $1$ and the bound is linear:
\[
  n-b_{h,g}(A)\ \le\ \frac{\Rc^{[g]}_{1/h,h}(A)}{2-2^{1/h}} .
\]
\end{corollary}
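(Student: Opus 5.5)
The plan is to specialize Theorem~\ref{thm:rawmoment} to the uniform law on $A$ with $f=s$, and then undo the normalization. Let $X$ be uniform on $A$, so $p_a=1/n$ for every $a$. The law of $U_h$ is then $w_u=\nu_u/n^h$, because $\nu_u$ counts the ordered $h$-tuples realizing $u$ among the $n^h$ equally likely ordered tuples. Since $A$ is finite, $\sum_u w_u^\alpha<\infty$ holds trivially, and the hypotheses $h\ge2$, $g\ge1$, $0<\alpha\le1/h$ are exactly those of Theorem~\ref{thm:rawmoment}.

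First I compare the deficits. For every labelling $\chi$, both power sums in \eqref{eq:Lg} are homogeneous of degree $\alpha$ in the weights, so the expression inside the infimum equals $n^{-h\alpha}$ times the corresponding expression in \eqref{eq:Rg}. Taking the infimum over $\chi$ gives
\[
\Lc_{\alpha,g}(s)=n^{-h\alpha}\,\Rc^{[g]}_{\alpha,h}(A).
\]

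Next I compare the defects. For $f=s$, a set $B$ is $(s,g)$-admissible exactly when every sum fibre meets $\Uc_h(B)$ in at most $g$ multisets, that is, when $B$ is a $B_h[g]$ set. Under the uniform law $\PR(X\in B)=|B\cap A|/n$, so the supremum is attained at a largest $B_h[g]$ subset of $A$, and
\[
\delta_{s,g}(X)=\frac{n-b_{h,g}(A)}{n}.
\]
One should check that restricting to $B\subset A$ loses nothing. This holds because any $B_h[g]$ set intersected with $A$ is still $B_h[g]$.

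Substituting both identities into Theorem~\ref{thm:rawmoment} gives
\[
\frac{n-b_{h,g}(A)}{n}\le\Bigl(\frac{n^{-h\alpha}\Rc^{[g]}_{\alpha,h}(A)}{|2-2^\alpha|}\Bigr)^{1/(h\alpha)}
=\frac1n\Bigl(\frac{\Rc^{[g]}_{\alpha,h}(A)}{|2-2^\alpha|}\Bigr)^{1/(h\alpha)}.
\]
The factors of $n$ cancel, which is the first claimed inequality.

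The remaining two forms are routine consequences. Raising both sides to the power $h\alpha>0$ is monotone, which gives the equivalent lower bound on $\Rc^{[g]}_{\alpha,h}(A)$. At $\alpha=1/h$ the exponent $1/(h\alpha)$ equals $1$, and $2^{1/h}<2$ lets us write $|2-2^{1/h}|=2-2^{1/h}$, which gives the linear bound.

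There is no real obstacle here. The only points needing care are that the exponent scaling cancels exactly, which it does because the power $n^{-h\alpha\cdot 1/(h\alpha)}$ is $n^{-1}$, and that the admissibility notion coincides with being $B_h[g]$.
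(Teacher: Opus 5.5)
Your proposal is correct and follows essentially the same route as the paper. The paper's proof also takes $X\sim\Unif(A)$, identifies $\Lc_{\alpha,g}(s)=n^{-h\alpha}\Rc^{[g]}_{\alpha,h}(A)$ and $\delta_{h,g}(X)=1-b_{h,g}(A)/n$, applies Theorem~\ref{thm:rawmoment}, and multiplies through by $n$, with the equivalent form and the case $\alpha=1/h$ obtained exactly as you describe.
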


So a deficit in the $1/h$-th moment of the representation function controls
linearly the number of points that must be deleted to reach a $B_h[g]$ set. Note that
Corollary~\ref{cor:finitemoment} is a purely finite extremal statement: no
weighting, no entropy and no limit occurs in it, only the representation function
of a finite set and the size of its largest $B_h[g]$ subset. The linear dependence
is sharp in order, since Example~\ref{ex:blocks} with a uniform weighting produces,
for every $h$ and $g$, finite sets on which the two sides are comparable to
$\lvert A\rvert$ (Proposition~\ref{prop:momentsharp}). The moment that does the
work is a low one: the classical additive energy is the single order $\alpha=2$,
and running the same argument there yields only the trivial counting bound
(Corollary~\ref{cor:combin}), whose right-hand side is larger by a power of
$\lvert A\rvert$ already for $A=\{1,\dots,n\}$. Within this removal framework a second-moment
hypothesis is therefore too coarse for $B_h[g]$ extraction, and it is the order
$\alpha=1/h$, singled out by the boundary $h\alpha=1$, that gives a bound of the
correct strength; see Remark~\ref{rem:regimes}.

\subsection{The method: sharp coarsening with auxiliary labels}
The upper bounds do not use the group. They are consequences of a single
inequality about coarsening with a bounded number of labels, which we isolate
because it is the part of the argument that transfers, and because its
$g$-dependence is what makes Section~\ref{sec:multiplicity} free.

\begin{theorem}[{Sharp coarsening inequality with $g$ auxiliary labels}]\label{thm:coarse}
Let $(w_u)_{u\in W}$ be a summable family of positive reals on a countable set
$W$, let $\pi:W\to Z$ be any map, and for each $z$ in the image let
$w_{z,1}\ge w_{z,2}\ge\cdots$ be the weights in the fibre $\pi^{-1}(z)$
arranged in nonincreasing order. Fix $g\ge1$ and $\alpha>0$ with $\alpha\ne1$,
put $d_\alpha=\lvert2-2^\alpha\rvert$, assume $\sum_uw_u^\alpha<\infty$, and set
\begin{equation}\label{eq:T}
  T_{\alpha,g}(\pi):=\sum_z\ \sum_{j>g}w_{z,j}^{\,\alpha} .
\end{equation}
Then for \emph{every} labelling $\lambda:W\to\{1,\dots,g\}$,
\begin{equation}\label{eq:collision}
  \Bigl\lvert\sum_{u\in W}w_u^\alpha
   -\sum_{(z,c)}\Bigl(\sum_{\substack{u\,:\,\pi(u)=z\\ \lambda(u)=c}}w_u
   \Bigr)^{\!\alpha}\Bigr\rvert\ \ge\ d_\alpha\,T_{\alpha,g}(\pi) .
\end{equation}
The constant $d_\alpha$ is optimal for every $g\ge1$. For $g=1$ equality holds
in \eqref{eq:collision} if and only if every nonsingleton fibre of $\pi$
consists of exactly two elements of equal weight.
\end{theorem}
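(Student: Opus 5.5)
The plan is to reduce \eqref{eq:collision} to a one-cell inequality, prove that inequality by induction, and then count, in each fibre, the elements that are not the largest in their cell. Fix a labelling $\lambda$ and call each nonempty set $\{u:\pi(u)=z,\ \lambda(u)=c\}$ a \emph{cell}. For $0<\alpha<1$ the map $x\mapsto x^\alpha$ is subadditive, so $\sum_{u\in\text{cell}}w_u^\alpha-\bigl(\sum_{u\in\text{cell}}w_u\bigr)^\alpha\ge0$ for every cell. For $\alpha>1$ it is superadditive and every such difference is $\le0$. In either case all cell contributions have the same sign, and the left side of \eqref{eq:collision} equals $\sum_{\text{cells}}\bigl\lvert\sum w_u^\alpha-(\sum w_u)^\alpha\bigr\rvert$. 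The rearrangement is legitimate because $\sum_u w_u^\alpha<\infty$.

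\emph{Key lemma.} Let $x_1\ge x_2\ge\cdots>0$ with $s=\sum_ix_i<\infty$. Then
\[
\Bigl\lvert\sum_i x_i^\alpha-s^\alpha\Bigr\rvert\ \ge\ d_\alpha\sum_{i\ge2}x_i^\alpha .
\]
For $\alpha<1$ this reads $s^\alpha\le x_1^\alpha+(2^\alpha-1)\sum_{i\ge2}x_i^\alpha$; for $\alpha>1$ the inequality is reversed. I will add the $x_i$ one at a time in nonincreasing order. Write $s_k=x_1+\dots+x_k$, so that $y=x_{k+1}\le x_1\le s_k$. It then suffices to show $(s+y)^\alpha-s^\alpha\le(2^\alpha-1)y^\alpha$ for $0<y\le s$ when $\alpha<1$, and the reverse when $\alpha>1$. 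With $t=y/s\in(0,1]$ this becomes a statement about $\varphi(t)=((1+t)^\alpha-1)/t^\alpha$. Differentiating, $\varphi'(t)$ has the sign of $1-(1+t)^{\alpha-1}$. Hence $\varphi$ is strictly increasing for $\alpha<1$ and strictly decreasing for $\alpha>1$, and its extreme value on $(0,1]$ is $\varphi(1)=2^\alpha-1$. Summing the telescoping steps proves the lemma for finite families, and countable families follow by passing to the limit.

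\emph{Counting within fibres.} Apply the lemma to each cell. Each cell loses exactly its largest element from the right side. A fibre $\pi^{-1}(z)$ meets at most $g$ cells, so at most $g$ of its elements are cell-maxima. The sum of $w^\alpha$ over the remaining elements is therefore at least the total over the fibre minus the $g$ largest terms, which is $\sum_{j>g}w_{z,j}^\alpha$. Summing over $z$ gives $d_\alpha T_{\alpha,g}(\pi)$.

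\emph{Sharpness and equality.} For optimality, take fibres each consisting of $g+1$ equal weights. Use one label for a pair and the remaining $g-1$ labels for singletons. Then each fibre contributes exactly $d_\alpha w^\alpha$ to the left side and $w^\alpha$ to $T$, so the constant cannot be improved. For the equality case with $g=1$, each fibre is a single cell, and equality forces equality in every inductive step. By strict monotonicity of $\varphi$ this requires $t=1$ at each step. The first added element then satisfies $x_2=x_1$, and any third element would have $t=x_3/(2x_1)\le\tfrac12<1$, so equality would fail. Hence every nonsingleton fibre has exactly two equal elements, and the converse is immediate. The main obstacle is the key lemma itself, namely the monotonicity of $\varphi$. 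The choice to insert elements in decreasing order is what keeps $t\le1$ at every step, and this is what makes the argument work.
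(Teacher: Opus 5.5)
Your proposal is correct and follows the paper's proof essentially step for step. Your $\varphi$ on $(0,1]$ is the paper's Step~1 two-atom function reparametrized by $t\mapsto 1/t$. The telescoping in nonincreasing order (so that $x_{k+1}\le s_k$), the same-sign cellwise decomposition, the count of at most $g$ cell maxima per fibre, and the $g=1$ equality analysis all match. The one difference is the optimality example: your fibre of $g+1$ equal weights with a single merged pair is simpler than the paper's $1,\dots,1,\varepsilon,\varepsilon$ fibre and fully suffices, since equality for one labelling already forces the infimum over labellings to equal $d_\alpha T_{\alpha,g}(\pi)$.
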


In words: allowing $g$ output labels leaves an unavoidable R\'enyi loss
proportional, with the optimal universal constant $\lvert2-2^\alpha\rvert$, to
the $\alpha$-mass of all but the $g$ heaviest atoms in each fibre. We state it for
an arbitrary map $\pi$ on an arbitrary countable weighted set, with no group, no
sums and no structure of any kind, because that is the form in which it is used
here, and it is applicable to any extremal problem with a bounded-fibre
multiplicity condition. The right-hand side of \eqref{eq:collision} does not
depend on $\lambda$, which is what makes the inequality usable after taking an
infimum, and the exempted atoms are exactly the ones a deletion argument may keep.

Since $d_\alpha\to0$ as $\alpha\to1$, the inequality degenerates precisely at
loss order $1$, where it must be replaced by the exact identity of
Lemma~\ref{lem:shannonmass}. This degeneration is \emph{not} what produces the
loss of stability: within the region $\alpha\ge\beta$ stability survives until
$\beta$ exceeds $1$, while crossing the diagonal below order one destroys
stability through the distinct dilution mechanism of
Remark~\ref{rem:offdiagonal}.

Theorem~\ref{thm:coarse} is a quantitative form of the Schur concavity of
$t\mapsto\sum t_i^\alpha$ under merging of coordinates; see \cite{MOA} for the
majorization background.

Feeding Theorem~\ref{thm:coarse} into a deletion lemma, together with three
budget estimates, gives the general weighted removal principle: a small R\'enyi
coarsening loss forces every fibre of an arbitrary map $f$ on $\Uc_h$ to shrink to
at most $g$ multisets after deleting a controlled amount of probability mass. Its
four parts supply the upper-bound mechanisms for the stable and supercritical
regimes of Theorem~\ref{thm:phase}, the dilution regime $0<\alpha<1$,
$\beta>\alpha$ requiring a separate construction instead; we state it as
Theorem~\ref{thm:general}, next to its proof, at the start of
Section~\ref{sec:upper}.

Additive structure enters only through the constructions of
Section~\ref{sec:lower}, which show that ordinary integer addition already
realizes every exponent in \eqref{eq:theta}.

\subsection{Where the sharpness comes from}
Two constructions do all the work.

\emph{Block construction} (Example~\ref{ex:blocks}): a heavy atom together with
$N$ base-$B$-separated copies of a fixed finite block $F$, carrying total light
mass $q$. All of Theorem~\ref{thm:phase}(1)--(3) comes from it, and only which
parameter goes to its limit changes; the four choices are tabulated at the start
of Section~\ref{sec:lower}.

\emph{Dusted-core construction} (Example~\ref{ex:dust}): one weighted core plus
$N$ atoms in general position, placed so that they separate sums on their own, so
that every surviving collision originates in the core. This leaves the deleted
weight untouched but inflates $M_\alpha(U_h)$; placing the core inside
$(\Z/h\Z)^{r}$ makes that inflation extremal and gives the constant of
Theorem~\ref{thm:phase}(4).

The block $F$ must be $B_{h-1}$, so that no collision straddles two copies, and
must fail to be $B_h$, so that each copy forces a deletion; the exact constants
need a $(1-o(1))$-fraction of each copy to go. That rests on a separation between
two consecutive levels of the $B_h$ hierarchy, which we record separately as it is
of independent interest: a counting bound (Lemma~\ref{lem:Fbound}) and the
Bose--Chowla construction \cite{BoseChowla} give, for every $h\ge2$ and fixed
$g\ge1$, finite integer sets $F$ that are $B_{h-1}$ and yet satisfy
\begin{equation}\label{eq:hierarchy}
  \frac{b_{h,g}(F)}{\lvert F\rvert}
  \;=\;O_{h,g}\bigl(\lvert F\rvert^{-1/h}\bigr)\ \longrightarrow\ 0 ;
\end{equation}
see Lemma~\ref{lem:BC}. Thus $B_{h-1}$-rigidity is compatible with vanishing
$B_h[g]$-density for every fixed $g$, and it is exactly this that lets a
long-block construction force a deletion fraction tending to $1$, so pinning
down the constants in parts 2 and 3 of Theorem~\ref{thm:phase}, and explaining
why they do not move with $g$.

\begin{remark}[why the diagonal becomes a stability boundary below order one]
\label{rem:offdiagonal}
Suppose $0<\alpha<1$ and $\beta>\alpha$. Then the budget no longer controls the
order-$\alpha$ power sum $P_\alpha$ in the way the stable-regime bounds need, and
they fail, not for want of a better argument but because stability itself fails.
The mechanism is invisible on the diagonal: a weighting may carry a \emph{fixed}
collision on atoms of fixed weight and, at negligible entropy cost, also many very
light atoms in general position. Those create no collisions of their own but
inflate $M_\alpha(U_h)$, and as the order-$\alpha$ deficit is a \emph{relative} quantity
the inflation drives $\Delta_{\alpha,h}$ to $0$ while the weight to be deleted does
not move. It is affordable exactly when $\alpha<\beta$: padding $N$ atoms with
total mass $q$ contributes $\asymp(N^{1-\alpha}q^\alpha)^h$ to $M_\alpha(U_h)$,
while $H_\beta\le D$ forces $N^{1-\beta}q^\beta=O(1)$. Note that $\alpha<1$ is
needed for the inflation itself, since $N^{1-\alpha}\to\infty$ only then. By
contrast, when $\alpha\ge1$ and $\beta>\alpha$ one is already in the supercritical
regime $\beta>1$ of Theorem~\ref{thm:phase}(3), where instability comes from the
loss of light-tail control rather than from dilution. See
Example~\ref{ex:dust}.
\end{remark}

\subsection{Related work}
The entropy formulation of additive combinatorics goes back to Ruzsa
\cite{Ruzsa} and Tao \cite{Tao}, was extended to functions of several
independent variables by Madiman, Marcus, and Tetali \cite{MMT}, and has been
revisited by Green, Manners, and Tao \cite{GMT}; see also \cite{KM} and, for
R\'enyi orders, \cite{MWWsperner,MWWpcg,WWM,WM}. Extraction of large Sidon and
generalized Sidon subsets is itself an active topic: see \cite{JingMudgal},
whose $B_h^+[g]$ sets are defined by exactly the multiplicity condition used in
Section~\ref{sec:multiplicity}, and \cite{PachZakharov,BFR,BailleulRiblet};
quantitative bounds for finite $B_h[g]$ sequences are in \cite{Cilleruelo}, and
Sidon-type sets also arise in design problems, including optical orthogonal
codes \cite{RDT}, Golomb rulers \cite{GolombGong}, and WDM channel allocations
designed to reduce four-wave mixing \cite{TSR}. Entropy has been brought to bear
on $B_h[g]$-type problems from a different direction by Croot, Mao, Pohoata,
Sheffer, and Yip \cite{CMPSY}, whose questions and techniques are distinct from
the framework here. Our functional differs from the entropic additive energy of
Goh \cite{Goh} in one respect that matters: by Proposition~\ref{prop:basic}(1) its
zero set is calibrated exactly to the weightings whose support is a $B_h$ set, for
every order $\alpha$, and that exact equality case is what makes the removal
question well posed.

It is worth saying what the two-order formulation buys. The $B_h[g]$ removal and
extraction results most closely related to our setting are primarily
cardinality-based, as in \cite{ErdosTuran,KSS,OBryant,Cilleruelo,BailleulRiblet};
energy and higher-energy methods have also been applied to Sidon-type extraction,
as in \cite{Shkredov} and the quantitative strengthening of \cite{JingMudgal}.
These results do not address the two-order weighted modulus studied here. A
weighted formulation is needed before the deletion cost becomes a supremum over
reweightings, and only then is there a modulus to be sharp about. Two independent
orders are needed before the boundaries $h\alpha=1$, $\beta=1$ and $\alpha=\beta$
can separate, since on the diagonal they collapse to the two points $\alpha=1/h$
and $\alpha=1$. An arbitrary-map principle is needed for the upper bounds to be
free of additive structure, which is what makes the $B_h[g]$ extension and
Corollary~\ref{cor:finitemoment} immediate rather than separate arguments.

Finally, a recent stability question for entropic doubling, raised by Li,
Gavalakis, and Kontoyiannis \cite[open problem following Example~5.4]{LGK}, is
recovered as the single point $(h,\alpha,\beta)=(2,1,1)$ of the diagram. Their
question, whether an entropy bound can replace a minimum-atom assumption, is
answered together with the exact asymptotics $D/\log(1/C)$ of the resulting
modulus (Corollary~\ref{cor:lgk}). Placing that question inside a two-parameter
family of sharp $B_h$-removal problems is what reveals the boundary $h\alpha=1$,
which is not visible from the single Shannon-order point $(2,1,1)$.

\subsection{Notation}
Throughout, $X$ is a discrete $G$-valued random variable with $p_a=\PR(X=a)$ and
countable support $A$, and $X_1,\dots,X_h$ are independent copies. We abbreviate
$\Uc_h=\Uc_h(A)$. For $u\in\Uc_h$ with distinct elements $a_1,\dots,a_k$ of
multiplicities $m_1,\dots,m_k$ summing to $h$ we write
$\nu_u=\binom{h}{m_1,\dots,m_k}$ for the multinomial coefficient, so that
$\nu_u$ is the number of ordered $h$-tuples realizing $u$ and
\begin{equation}\label{eq:wdef}
  w_u:=\PR(U_h=u)=\nu_u\prod_{i=1}^k p_{a_i}^{m_i} ,
\end{equation}
and we set $m_u=\min\{p_a:a\in u\}$, the minimum over the distinct elements of
$u$. We write $q_z=\PR(S_h=z)$, $P_r=P_r(X)=\sum_{a\in A}p_a^r$, and
$\eta(q)=-q\log q-(1-q)\log(1-q)$. All logarithms are natural, and for
$\alpha>0$, $\alpha\ne1$,
\[
  M_\alpha(Y)=\sum_y\PR(Y=y)^\alpha,\qquad
  H_\alpha(Y)=\frac{\log M_\alpha(Y)}{1-\alpha},
\]
with $H_1=H$ the Shannon entropy. R\'enyi entropy \cite{Renyi} does not increase
under a deterministic map, which is what makes \eqref{eq:defdelta} nonnegative.

For $h=2$ and $\alpha=1$ the deficit is classical in disguise. Since
$H(X_1,X_2)=2H(X)$ and $U_2$ determines the ordered pair up to a swap, which
costs $\log2$ nats exactly when $X_1\ne X_2$,
\begin{equation}\label{eq:HU1}
  H(U_2)=2H(X)-(\log2)\Bigl(1-\sum_ap_a^2\Bigr),
\end{equation}
so $\Delta_{1,2}(X)$ is exactly the deficit in the sharp entropic doubling
inequality of \cite[Lemma 5.1]{LGK}, whose stability was the question recalled
above.

\subsection{Organization}
Sections~\ref{sec:prelim}--\ref{sec:upper} set up the deficit and its $g$-list
refinement, prove Theorem~\ref{thm:coarse}, the deletion lemma and the three
budget estimates, and deduce Theorems~\ref{thm:rawmoment}
and~\ref{thm:general}. Sections~\ref{sec:blocks}--\ref{sec:lower} build the block
template, compute its invariants exactly and run it through the regimes,
completing Theorems~\ref{thm:phase} and~\ref{thm:exactconst}, with
Section~\ref{sec:dilution} supplying the dusted core behind part 4.
Section~\ref{sec:energy} identifies the deficit with moments of the representation
function, Section~\ref{sec:multiplicity} extends parts 1--3 to $B_h[g]$ sets, and
Section~\ref{sec:open} lists open problems.

\section{Preliminaries on the list deficit}\label{sec:prelim}

For any map $\pi$ on $\Uc_h$ put
$\Delta_{\alpha,\pi}(X):=H_\alpha(U_h)-H_\alpha\bigl(\pi(U_h)\bigr)$, and for a
map $f$ on $\Uc_h$ and $g\ge1$ define the \emph{$g$-list deficit}
\begin{equation}\label{eq:listdeficit}
  \Delta^{[g]}_{\alpha,f}(X):=\inf_{\chi:\Uc_h\to\{1,\dots,g\}}
   \Delta_{\alpha,(f,\chi)}(X)\ \ge\ 0 ,
\end{equation}
whenever the differences are well defined, which is the case under the
finiteness hypotheses of Proposition~\ref{prop:basic}. Thus
$\Delta^{[1]}_{\alpha,s}=\Delta_{\alpha,h}$ is the deficit \eqref{eq:defdelta}
of the introduction, and we abbreviate
$\Delta^{[g]}_{\alpha,h}=\Delta^{[g]}_{\alpha,s}$. In the same way we write
$b_{h,g}$, $\delta_{h,g}$ and
\begin{equation}\label{eq:Phig}
  \Phi^{[g]}_{\alpha,\beta,D,h}(C):=\sup_G\ \sup_{X\text{ on }G}
  \bigl\{\delta_{h,g}(X): \Delta^{[g]}_{\alpha,h}(X)\le C,\
   H_\beta(X)\le D\bigr\}
\end{equation}
for the $B_h[g]$ analogues of \eqref{eq:dh} and \eqref{eq:Phi}, the inner
supremum again restricted to the $X$ admitted in \eqref{eq:Phi}, so that
$\Phi^{[1]}_{\alpha,\beta,D,h}=\Phi_{\alpha,\beta,D,h}$; and
$\Phi^{\Z,[g]}$ for the version with $G$ fixed equal to $\Z$.

\begin{proposition}\label{prop:MU}
For every $\alpha>0$ and $h\ge2$,
\[
  M_\alpha(U_h)\le
  \begin{cases}
    P_\alpha^{\,h}, & 0<\alpha<1,\\
    (h!)^{\alpha-1}P_\alpha^{\,h}, & \alpha>1,
  \end{cases}
\]
and for $h=2$ one has the identity
$M_\alpha(U_2)=2^{\alpha-1}P_\alpha^2+(1-2^{\alpha-1})P_{2\alpha}$.
\end{proposition}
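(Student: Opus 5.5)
The plan is to compare $U_h$ with the ordered tuple $(X_1,\dots,X_h)$. That tuple has law $\prod_i p_{a_i}$, and $M_\alpha(X_1,\dots,X_h)=P_\alpha^{\,h}$. The multiset $U_h$ is a deterministic function of the tuple, and the fibre over $u$ consists of $\nu_u$ ordered tuples, each of weight $\pi_u:=\prod_i p_{a_i}^{m_i}$. So $w_u=\nu_u\pi_u$, and
\[
  M_\alpha(U_h)=\sum_u \nu_u^{\alpha}\pi_u^{\alpha},\qquad
  P_\alpha^{\,h}=\sum_u \nu_u\,\pi_u^{\alpha}.
\]
Both bounds then reduce to comparing $\nu_u^\alpha$ with $\nu_u$ termwise.

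For $0<\alpha<1$, note that $\nu_u\ge1$ gives $\nu_u^\alpha\le\nu_u$, so $M_\alpha(U_h)\le P_\alpha^{\,h}$ follows immediately. This is simply the statement that merging atoms decreases $\sum w^\alpha$ for $\alpha<1$. For $\alpha>1$, write $\nu_u^\alpha=\nu_u^{\alpha-1}\nu_u$ and use $\nu_u\le h!$, since a multinomial coefficient with $h$ total is at most $h!$. This gives $\nu_u^\alpha\le(h!)^{\alpha-1}\nu_u$, and summing over $u$ yields $M_\alpha(U_h)\le (h!)^{\alpha-1}P_\alpha^{\,h}$. The case $\alpha=1$ is not claimed, and both sides equal $1$ there in any case.

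For the $h=2$ identity, split $\Uc_2$ into diagonal multisets $\{a,a\}$, with $\nu=1$ and $w=p_a^2$, and off-diagonal ones $\{a,b\}$ with $a\ne b$, where $\nu=2$ and $w=2p_ap_b$. This gives
\[
  M_\alpha(U_2)=\sum_a p_a^{2\alpha}+2^\alpha\sum_{\{a,b\},a\ne b}(p_ap_b)^\alpha .
\]
Since $\sum_{a\ne b\ \text{unordered}}(p_ap_b)^\alpha=\tfrac12(P_\alpha^2-P_{2\alpha})$, the right-hand side equals $P_{2\alpha}+2^{\alpha-1}(P_\alpha^2-P_{2\alpha})$, which is the stated formula.

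There is no real obstacle here. The only points to check are that all sums converge, which holds whenever $P_\alpha<\infty$ because the bounds themselves show finiteness, and that rearranging the nonnegative series is legitimate, which it is.
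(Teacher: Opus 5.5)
Your proposal is correct and follows the paper's proof essentially verbatim: you write $M_\alpha(U_h)=\sum_u\nu_u^{\alpha-1}\cdot\nu_u\prod_ip_{a_i}^{\alpha m_i}$ and bound $\nu_u^{\alpha-1}$ by $1$ or by $(h!)^{\alpha-1}$, with the remaining sum over ordered tuples equal to $P_\alpha^{\,h}$. For $h=2$ you split $\Uc_2$ into diagonal and off-diagonal pairs and use $2\sum_{a<b}p_a^\alpha p_b^\alpha=P_\alpha^2-P_{2\alpha}$, exactly as the paper does.
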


\begin{proof}
By \eqref{eq:wdef}, $1\le\nu_u\le h!$ and $u$ corresponds to exactly $\nu_u$
ordered $h$-tuples, so
$M_\alpha(U_h)=\sum_{u\in\Uc_h}\nu_u^{\alpha-1}\,\nu_u\prod_ip_{a_i}^{\alpha m_i}$.
For $\alpha<1$ bound $\nu_u^{\alpha-1}\le1$; the remaining sum runs over ordered
$h$-tuples and equals $P_\alpha^h$. For $\alpha>1$ bound
$\nu_u^{\alpha-1}\le(h!)^{\alpha-1}$. The case $h=2$ follows by splitting
$\Uc_2$ into off-diagonal and diagonal pairs and using
$2\sum_{a<b}p_a^\alpha p_b^\alpha=P_\alpha^2-P_{2\alpha}$.
\end{proof}

\begin{proposition}\label{prop:basic}
Let $\alpha>0$ and $h\ge2$, and let $\pi$ be any map on $\Uc_h$. Assume that
$M_\alpha(U_h)<\infty$ if $\alpha\ne1$, and that $H(X)<\infty$ if $\alpha=1$, so
that $\Delta_{\alpha,\pi}(X)$ is well defined. Then:
\begin{enumerate}
\item $\Delta_{\alpha,\pi}(X)\ge0$, with equality if and only if $\pi$ is
injective on $\Uc_h$.
\item If $\pi'=\psi\circ\pi$ for some map $\psi$, then
$\Delta_{\alpha,\pi}(X)\le\Delta_{\alpha,\pi'}(X)$: further merging can only
increase the loss.
\item If $\alpha\ne1$ then
$M_\alpha(U_h)=M_\alpha(\pi(U_h))\,e^{(1-\alpha)\Delta_{\alpha,\pi}(X)}$; in
particular $M_\alpha(\pi(U_h))\le M_\alpha(U_h)$ for $\alpha<1$ and
$M_\alpha(\pi(U_h))\ge M_\alpha(U_h)$ for $\alpha>1$.
\item Taking $\pi=s$, $g=1$, the quantity $\Delta_{1,2}(X)$ equals the deficit
of \emph{\cite[Lemma 5.1]{LGK}}, namely
$H(X)-(\log2)\bigl(1-\sum_ap_a^2\bigr)-\bigl(H(X+X')-H(X)\bigr)$.
\end{enumerate}
\end{proposition}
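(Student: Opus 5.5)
I will prove the four parts in the order stated, working throughout with the law of $U_h$ and its image $\pi(U_h)$, both countable.

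\emph{Part (1).} For $\alpha\ne1$ this reduces to a power-sum comparison. Merging the masses $w_u$ of a fibre $\pi^{-1}(z)$ into $q_z=\sum_{u\in\pi^{-1}(z)}w_u$ changes $M_\alpha$ in a definite direction:
\begin{itemize}
\item for $\alpha>1$, superadditivity of $t\mapsto t^\alpha$ gives $q_z^\alpha\ge\sum_{u\in\pi^{-1}(z)}w_u^\alpha$;
\item for $\alpha<1$, subadditivity reverses this inequality.
\end{itemize}
In both cases $(1-\alpha)^{-1}\log$ turns the comparison into $H_\alpha(\pi(U_h))\le H_\alpha(U_h)$. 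Strictness for positive reals comes from strict super/subadditivity of $t^\alpha$ when at least two summands are positive. Since every $w_u>0$ on $\Uc_h$, equality forces every fibre to be a singleton, i.e.\ $\pi$ is injective.

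For $\alpha=1$ I use the chain rule $H(U_h)=H(\pi(U_h))+H(U_h\mid\pi(U_h))$. The finiteness of $H(U_h)$ follows from $H(X)<\infty$, because $U_h$ is a function of $(X_1,\dots,X_h)$ and so $H(U_h)\le hH(X)$. The conditional entropy is $\ge0$, and it vanishes iff $U_h$ is a.s.\ determined by $\pi(U_h)$. Since all atoms are positive, that means injectivity.

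\emph{Part (2).} Here $\pi'(U_h)=\psi(\pi(U_h))$, so I apply the monotonicity just proved, with $\pi(U_h)$ in place of $U_h$, to get $H_\alpha(\pi'(U_h))\le H_\alpha(\pi(U_h))$. Subtracting from $H_\alpha(U_h)$ gives the claim. I need the finiteness of $M_\alpha(\pi(U_h))$ for this:
\begin{itemize}
\item for $\alpha<1$ it is implied by the power-sum comparison above;
\item for $\alpha>1$ it is automatic, since the power sum is at most $1$.
\end{itemize}

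\emph{Part (3).} This is just the definition rewritten. We have $(1-\alpha)\Delta_{\alpha,\pi}=\log M_\alpha(U_h)-\log M_\alpha(\pi(U_h))$; exponentiating gives the identity. The inequalities then follow from $\Delta\ge0$ and the sign of $1-\alpha$.

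\emph{Part (4).} This is the computation \eqref{eq:HU1}, which I will justify carefully. The pair $(X_1,X_2)$ is a function of $U_2$ together with an ordering bit. That bit is deterministic when $X_1=X_2$, and uniform and independent of $U_2$ given $X_1\ne X_2$. Hence
\[
H(X_1,X_2)=H(U_2)+(\log2)\,\PR(X_1\ne X_2).
\]
Using $H(X_1,X_2)=2H(X)$ and $\PR(X_1\ne X_2)=1-\sum_ap_a^2$, and subtracting $H(S_2)=H(X+X')$, gives the displayed expression, which is the deficit of \cite[Lemma 5.1]{LGK}.

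The only delicate point is in (1): making the strictness and finiteness bookkeeping work for countable fibres. I will handle it with strict inequality $(\sum t_i)^\alpha>\sum t_i^\alpha$ for $\alpha>1$ (and the reverse for $\alpha<1$) whenever at least two $t_i>0$, applied fibrewise. Summing over fibres is legitimate because all terms are nonnegative.
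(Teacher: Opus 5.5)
Your proposal is correct and follows the paper's proof essentially step for step. You use fibrewise sub/superadditivity of $t\mapsto t^\alpha$ (strict on nonsingleton fibres, since all $w_u>0$) and the chain rule with $H(U_h)\le hH(X)$ at $\alpha=1$ for (1), monotonicity of R\'enyi entropy under $\psi$ for (2), unwinding the definition for (3), and the ordering-bit computation behind \eqref{eq:HU1} for (4); the only difference is that you spell out the strictness and finiteness bookkeeping, including the derivation of \eqref{eq:HU1}, which the paper leaves implicit.
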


\begin{proof}
(1) Group $\Uc_h$ into the fibres of $\pi$. For $0<\alpha<1$ subadditivity of
$t\mapsto t^\alpha$ gives $\PR(\pi(U_h)=z)^\alpha\le\sum_{u\mapsto z}w_u^\alpha$
and hence $M_\alpha(\pi(U_h))\le M_\alpha(U_h)$; since $1/(1-\alpha)>0$ this
gives $\Delta_{\alpha,\pi}\ge0$. For $\alpha>1$ the map is superadditive and
$1/(1-\alpha)<0$. Under the finiteness hypothesis the comparison may be made
term by term, and equality forces every fibre to be a singleton. For $\alpha=1$
and $H(X)<\infty$ we have $H(U_h)\le hH(X)<\infty$ and
$\Delta_{1,\pi}=H(U_h\mid\pi(U_h))$, which vanishes iff $U_h$ is a function of
$\pi(U_h)$.

(2) $\pi'(U_h)=\psi(\pi(U_h))$ is a function of $\pi(U_h)$, so
$H_\alpha(\pi'(U_h))\le H_\alpha(\pi(U_h))$.

(3) Immediate from the definition of $H_\alpha$. (4) Substitute \eqref{eq:HU1}
into $H(U_2)-H(S_2)$ and use $H(S_2)=H(X)+\bigl(H(X+X')-H(X)\bigr)$.
\end{proof}

\begin{proposition}\label{prop:listzero}
Under the hypotheses of Proposition~\ref{prop:basic}, and for every $g\ge1$,
\[
  \Delta^{[g]}_{\alpha,f}(X)=0
  \iff \text{every fibre of }f\text{ meets }\Uc_h\text{ in at most }g
  \text{ multisets}.
\]
In particular $\Delta^{[g]}_{\alpha,h}(X)=0$ if and only if $\supp X$ is a
$B_h[g]$ set; for $g=1$ this is the equality case of \eqref{eq:defdelta}.
\end{proposition}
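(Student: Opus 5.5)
The plan is to reduce both directions to Proposition~\ref{prop:basic}(1), applied to the refined map $\pi=(f,\chi)$ for each labelling $\chi:\Uc_h\to\{1,\dots,g\}$. By that proposition, each $\Delta_{\alpha,(f,\chi)}(X)$ is nonnegative and vanishes exactly when $(f,\chi)$ is injective on $\Uc_h$. The finiteness hypotheses concern only $U_h$, not $\pi$, so they carry over unchanged.

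For ($\Leftarrow$), suppose every fibre of $f$ has at most $g$ elements. Then within each fibre we may give the elements distinct labels in $\{1,\dots,g\}$. This defines a $\chi$ for which $(f,\chi)$ is injective, so $\Delta_{\alpha,(f,\chi)}(X)=0$, and therefore the infimum is $0$.

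For ($\Rightarrow$), the only subtlety is that an infimum of zero need not be attained, so "infimum $=0$" does not immediately give a single injective labelling. I would handle this by contraposition. Suppose some fibre $f^{-1}(z)$ contains at least $g+1$ multisets $u_0,\dots,u_g$, each with positive weight $w_{u_i}$, since $\Uc_h=\Uc_h(\supp X)$. By pigeonhole, every $\chi$ sends two of them, say $u_i,u_j$, to the same label. Hence the fibre of $(f,\chi)$ through $u_i$ contains both $u_i$ and $u_j$.

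I then need a lower bound on $\Delta_{\alpha,(f,\chi)}$ that does not depend on $\chi$. The cleanest route is Theorem~\ref{thm:coarse}, if it is allowed here (it is stated earlier, although its proof comes later). For $\alpha\neq1$, Proposition~\ref{prop:basic}(3) turns the deficit into the identity $M_\alpha(U_h)\bigl(1-e^{-(1-\alpha)\Delta}\bigr)=M_\alpha(U_h)-M_\alpha((f,\chi)(U_h))$. Theorem~\ref{thm:coarse} then bounds the absolute value of the right-hand side below by $d_\alpha T_{\alpha,g}(f)>0$, uniformly in $\chi$. Since $M_\alpha(U_h)<\infty$, this gives a positive lower bound on $\Delta$ uniformly in $\chi$.

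To avoid relying on Theorem~\ref{thm:coarse}, I would argue directly instead. Each $\chi$ merges some pair among $u_0,\dots,u_g$, and there are only finitely many such pairs. By Proposition~\ref{prop:basic}(2), the deficit of $(f,\chi)$ is at least the deficit of the map that identifies only that pair, because $(f,\chi)$ factors through it after all other fibres are merged appropriately. More precisely, the fibre of $(f,\chi)$ containing $u_i,u_j$ receives at least $w_{u_i}+w_{u_j}$, so I would compare power sums fibre by fibre. By strict sub- or superadditivity of $t^\alpha$ on $\{u_i,u_j\}$, the power-sum gap is at least $\bigl|w_{u_i}^\alpha+w_{u_j}^\alpha-(w_{u_i}+w_{u_j})^\alpha\bigr|>0$, with every other fibre contributing a gap of the same sign. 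The minimum of this quantity over the finitely many pairs is a $\chi$-independent positive constant. For $\alpha=1$, I would use instead $\Delta_{1,(f,\chi)}=H(U_h\mid (f,\chi)(U_h))$, which is at least the contribution of the merged pair, namely $(w_{u_i}+w_{u_j})\,\eta\bigl(w_{u_i}/(w_{u_i}+w_{u_j})\bigr)$ times a positive factor. This again gives a uniform positive bound.

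The main obstacle is exactly this uniformity over $\chi$, so that a zero infimum forces the fibre condition. The finite pigeonhole family is what resolves it. The final statement then follows by taking $f=s$: fibres of $s$ with at most $g$ multisets is the definition of $\supp X$ being $B_h[g]$, and $g=1$ recovers the equality case in \eqref{eq:defdelta}.
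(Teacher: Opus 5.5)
Your argument is correct and its direct form is the paper's proof. You use pigeonhole on $g+1$ multisets in one fibre, factor $(f,\chi)$ through the map $\pi_{ij}$ that identifies only the colliding pair, apply Proposition~\ref{prop:basic}(2), and take the minimum of the finitely many positive deficits $\Delta_{\alpha,\pi_{ij}}(X)$ as a $\chi$-independent lower bound. Your alternative via Theorem~\ref{thm:coarse} is also valid for $\alpha\ne1$ (it is essentially the bound $T_{\alpha,g}(f)\le\Lc_{\alpha,g}(f)/d_\alpha$ of Lemma~\ref{lem:mass}); at $\alpha=1$ it would additionally need Lemma~\ref{lem:shannonmass}, a case your direct argument already covers.
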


\begin{proof}
If every fibre has at most $g$ elements, choose $\chi$ injective on each fibre;
then $(f,\chi)$ is injective on $\Uc_h$ and $\Delta_{\alpha,(f,\chi)}(X)=0$ by
Proposition~\ref{prop:basic}(1), so the infimum is $0$.

Conversely, suppose some fibre contains $g+1$ distinct multisets
$u_0,\dots,u_g$. For every $\chi$ two of them, say $u_i$ and $u_j$, receive the
same label, so $(f,\chi)$ takes the same value at $u_i$ and $u_j$. Let
$\pi_{ij}$ be the map on $\Uc_h$ that identifies $u_i$ with $u_j$ and is
injective elsewhere. Then $(f,\chi)$ factors through $\pi_{ij}$, so
Proposition~\ref{prop:basic}(2) gives
$\Delta_{\alpha,(f,\chi)}(X)\ge\Delta_{\alpha,\pi_{ij}}(X)$. Hence
\[
  \Delta^{[g]}_{\alpha,f}(X)\ \ge\
  \min_{0\le i<j\le g}\Delta_{\alpha,\pi_{ij}}(X)\ >\ 0 ,
\]
the minimum being over finitely many pairs and each term positive by
Proposition~\ref{prop:basic}(1). The statement for $f=s$ is
the definition of a $B_h[g]$ set.
\end{proof}

Two elementary facts about the budget alone make the region $\alpha<\beta$ of
Theorem~\ref{thm:phase} accessible. We first characterize the quantity
$m_\beta(D)$ of \eqref{eq:mbeta}.

\begin{lemma}[the least possible largest atom]\label{lem:mbeta}
For $m\in(0,1]$ let $n=\lfloor1/m\rfloor$ and let $v_m$ be the vector with $n$
entries equal to $m$ followed by one entry $1-nm$ (the last entry omitted when
$nm=1$). Then $v_m$ majorizes every probability vector $p$ with
$\lVert p\rVert_\infty\le m$; the same partial-sum comparison proves
$v_m\succ p$ for countably supported $p$ as well. Hence
\[
  \min\bigl\{H_\beta(p):\lVert p\rVert_\infty\le m\bigr\}=H_\beta(v_m) ,
\]
the map $m\mapsto H_\beta(v_m)$ is continuous and strictly decreasing from
$+\infty$ to $0$ on $(0,1]$, and $m_\beta(D)$ is the unique $m$ with
$H_\beta(v_m)=D$; in particular the infimum in \eqref{eq:mbeta} is attained, at
$v_{m_\beta(D)}$, and $D\mapsto m_\beta(D)$ is continuous and strictly
decreasing. For $0<D<\log2$ one has $n=1$, and $m_\beta(D)$ is then the unique
$m\in(\tfrac12,1)$ with $H_\beta(m,1-m)=D$; at $D=\log2$ one has
$m_\beta(\log2)=\tfrac12$, attained by $v_{1/2}=(\tfrac12,\tfrac12)$.
\end{lemma}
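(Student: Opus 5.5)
We first prove the majorization claim by comparing partial sums. Let $p$ have $\lVert p\rVert_\infty\le m$, and arrange its entries in nonincreasing order $p_{(1)}\ge p_{(2)}\ge\cdots$. For $k\le n$ every one of the $k$ largest entries is at most $m$, so
\[
  \sum_{i\le k}p_{(i)}\le km=\sum_{i\le k}(v_m)_{(i)} .
\]
For $k>n$ the partial sum of $v_m$ already equals $1$, which bounds any partial sum of $p$. This argument uses only the sorted order, so it applies equally to countably supported $p$.

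Next we pass from majorization to the entropy minimum. For every $\beta>0$ the R\'enyi entropy $H_\beta$ is Schur concave: $p\mapsto\sum p_i^\beta$ is Schur convex for $\beta>1$ and Schur concave for $\beta<1$, and $H_\beta$ is a monotone transform with the matching sign of $1/(1-\beta)$. At $\beta=1$ one uses Shannon entropy directly. For infinite support we argue via Karamata's inequality for the convex or concave function $t\mapsto t^\beta$ (resp.\ $-t\log t$) on sequences, approximating by truncation; the value $+\infty$ is allowed. This gives $H_\beta(p)\ge H_\beta(v_m)$. Since $v_m$ is itself admissible, the minimum over $\{\lVert p\rVert_\infty\le m\}$ is attained at $v_m$.

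We then establish the properties of $\phi(m):=H_\beta(v_m)$. \emph{Continuity.} On each interval $m\in(1/(n+1),1/n]$ the vector $v_m$ depends continuously on $m$. At $m=1/n$ the extra entry $1-nm$ vanishes, so the two one-sided descriptions agree there, and $\phi$ is continuous.

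\emph{Strict monotonicity.} If $m<m'$ then $v_{m'}\succ v_m$, since $v_m$ has sup-norm $m\le m'$. The majorization is strict because the first partial sums differ. Strict Schur concavity of $H_\beta$ then gives $\phi(m')<\phi(m)$. I expect this strictness step to be the only mild obstacle. I would handle it by noting that $t^\beta$ (or $-t\log t$) is strictly convex or concave, so equality in Karamata forces equal sorted vectors.

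\emph{Limits.} At $m=1$ we have $\phi(1)=H_\beta(\delta)=0$. As $m\downarrow0$, $v_m$ is nearly uniform on about $1/m$ points, so $\phi(m)\ge\log\lfloor1/m\rfloor\to\infty$. The lower bound follows because $v_m$ majorizes nothing less spread than uniform on $n+1$ points; more directly, $\sum v_i^\beta$ is comparable to $n^{1-\beta}$.

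It remains to identify $m_\beta(D)$ with $\phi^{-1}(D)$. For any $p$ with $H_\beta(p)\le D$, put $m=\lVert p\rVert_\infty$. Then $\phi(m)\le H_\beta(p)\le D$, so $m\ge\phi^{-1}(D)$. Conversely, $v_{\phi^{-1}(D)}$ attains this value. Hence the infimum equals $\phi^{-1}(D)$ and is attained. Continuity and strict decrease of $D\mapsto m_\beta(D)$ are inherited from $\phi$.

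Finally we treat the two explicit cases. For $m\in(1/2,1)$ we have $n=1$, and $\phi(m)=H_\beta(m,1-m)$. We also have $\phi(1/2)=\log 2$. Monotonicity then shows that $D<\log2$ corresponds exactly to $m>1/2$, and $D=\log2$ corresponds to $m=1/2$, which is attained by $v_{1/2}=(\tfrac12,\tfrac12)$.
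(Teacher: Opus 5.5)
Your proposal is correct and follows essentially the same route as the paper (partial-sum majorization, Schur concavity of $H_\beta$, piecewise continuity with a check at $m=1/n$, strict monotonicity from strict majorization, the limits at $0$ and $1$), and you also spell out the inversion $m_\beta(D)=\phi^{-1}(D)$, which the paper leaves implicit. One small slip: writing $u_k$ for the uniform vector on $k$ points, your comparison of $v_m$ with $u_{n+1}$ goes the wrong way, since $v_m\succ u_{n+1}$ yields only the upper bound $H_\beta(v_m)\le\log(n+1)$; the needed lower bound comes from $u_n\succ v_m$ (because $km\le k/n$ for $k\le n$), or directly from $H_\beta(v_m)\ge H_\infty(v_m)=\log(1/m)$, though your fallback estimate on $\sum_i v_i^\beta$ already suffices for $\beta\ne1$.
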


\begin{proof}
If $\lVert p\rVert_\infty\le m$ then the decreasing rearrangement of $p$ has
$\sum_{i\le k}p_{(i)}\le km$ for every $k\le n$, while the partial sums of $v_m$
are exactly $km$ for $k\le n$ and $1$ afterwards; so $v_m\succ p$. R\'enyi
entropy is Schur concave for every order \emph{\cite{MOA}}, whence the displayed
minimum. Continuity of $m\mapsto H_\beta(v_m)$ is clear on each interval
$[\frac1{n+1},\frac1n]$, and at an endpoint $m=1/n$ both one-sided limits equal
$H_\beta$ of the uniform vector on $n$ points; strict monotonicity holds because
decreasing $m$ strictly refines the majorization order. Finally $H_\beta(v_1)=0$
and $H_\beta(v_m)\ge\log\lfloor1/m\rfloor\to\infty$ as $m\downarrow0$. The last
sentence is the case $n=1$, together with $v_{1/2}=(\frac12,\frac12)$.
\end{proof}

\begin{proposition}[a universal ceiling]\label{prop:ceiling}
For every $h\ge2$, $g\ge1$, $\alpha,\beta,D>0$ and $C>0$,
\[
  \delta_{h,g}(X)\ \le\ 1-\lVert p\rVert_\infty
  \qquad\text{and hence}\qquad
  \Phi^{[g]}_{\alpha,\beta,D,h}(C)\ \le\ 1-m_\beta(D) .
\]
\end{proposition}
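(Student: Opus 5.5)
The first inequality comes from keeping only the heaviest atom. A single point is always a $B_h[g]$ set, so we take $B=\{a^*\}$ with $p_{a^*}=\lVert p\rVert_\infty$.

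When $B$ has one element, $\Uc_h(B)$ consists of exactly one multiset, namely $a^*$ taken $h$ times. So every fibre of the sum map, or of any map $f$ on $\Uc_h$, meets $\Uc_h(B)$ in at most one multiset. Since $g\ge1$, this makes $B$ admissible. If the supremum defining $\|p\|_\infty$ were not attained we would choose atoms approaching it, but for a probability vector the maximum is always attained, so this is not needed. We obtain $\PR(X\in B)=\lVert p\rVert_\infty$, and therefore $\delta_{h,g}(X)\le1-\lVert p\rVert_\infty$. This step needs no group structure and no use of the deficit.

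For the bound on $\Phi^{[g]}$, let $X$ be any variable admitted in \eqref{eq:Phig}. The deficit condition is irrelevant here; the only hypothesis that matters is $H_\beta(X)\le D$. The law $p$ of $X$ is then a probability vector with $H_\beta(p)\le D$, so by the definition \eqref{eq:mbeta} of $m_\beta(D)$ we have $\lVert p\rVert_\infty\ge m_\beta(D)$. Lemma~\ref{lem:mbeta} confirms that $m_\beta(D)$ is a well-defined number in $(0,1]$ and that the infimum is attained, although attainment is not actually needed for the inequality. Combining the two facts gives $\delta_{h,g}(X)\le1-m_\beta(D)$. Taking the supremum over $X$ and over all groups $G$ yields $\Phi^{[g]}_{\alpha,\beta,D,h}(C)\le1-m_\beta(D)$ for every $C>0$.

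The only step that needs any care is the convention for $\beta=1$ and for countably supported $p$: the definition of $m_\beta$ must range over the same class of vectors as the laws of admissible $X$. Lemma~\ref{lem:mbeta} is stated to include countable supports, so this matches, and no real obstacle is expected.
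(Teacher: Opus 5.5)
Your proof is correct and is the paper's argument: the heaviest singleton is a $B_h[g]$ set, which gives $\delta_{h,g}(X)\le1-\lVert p\rVert_\infty$. The budget $H_\beta(X)\le D$ then yields $\lVert p\rVert_\infty\ge m_\beta(D)$ directly from the definition \eqref{eq:mbeta}, with the deficit hypothesis playing no role.
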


\begin{proof}
A singleton is a $B_h[g]$ set, so taking $B=\{a\}$ with $p_a=\lVert
p\rVert_\infty$ in \eqref{eq:dh} gives the first bound; the second is
\eqref{eq:mbeta}, since $H_\beta(X)\le D$.
\end{proof}

\section{Sharp coarsening and deletion}\label{sec:coarse}

\begin{proof}[Proof of Theorem~\ref{thm:coarse}]
\emph{Step 1: two atoms.} For $a\ge b>0$ and $\alpha>0$, $\alpha\ne1$,
\begin{equation}\label{eq:twoatom}
  \bigl\lvert a^\alpha+b^\alpha-(a+b)^\alpha\bigr\rvert\ \ge\ d_\alpha b^\alpha .
\end{equation}
Both sides are homogeneous of degree $\alpha$, so we may take $b=1$ and write
$t=a\ge1$. For $0<\alpha<1$ the left side equals
$\psi(t):=t^\alpha+1-(t+1)^\alpha$, which is nonnegative by subadditivity and
satisfies
$\psi'(t)=\alpha\bigl(t^{\alpha-1}-(t+1)^{\alpha-1}\bigr)>0$
because $\alpha-1<0$; hence $\psi(t)\ge\psi(1)=2-2^\alpha=d_\alpha$. For
$\alpha>1$ the left side equals $\varphi(t):=(t+1)^\alpha-t^\alpha-1$, with
$\varphi'(t)=\alpha\bigl((t+1)^{\alpha-1}-t^{\alpha-1}\bigr)>0$, so
$\varphi(t)\ge\varphi(1)=2^\alpha-2=d_\alpha$.

\emph{Step 2: one class.} Let $P$ be a nonempty countable index set carrying
positive weights with finite sum $\sigma$ and finite $\alpha$-power sum. Because
only finitely many weights exceed any positive threshold, they can be arranged
in nonincreasing order $x_1\ge x_2\ge\cdots$. We claim
\begin{equation}\label{eq:oneclass}
  \Bigl\lvert\sum_{i}x_i^\alpha-\sigma^\alpha\Bigr\rvert
  \ \ge\ d_\alpha\sum_{i\ge2}x_i^\alpha .
\end{equation}
Put $a_k=x_1+\dots+x_k$. Since $a_k\ge x_1\ge x_{k+1}$, \eqref{eq:twoatom}
applies to the pair $(a_k,x_{k+1})$ for every $k\ge1$. For $\alpha<1$ each of
the quantities $a_k^\alpha+x_{k+1}^\alpha-a_{k+1}^\alpha$ is therefore at least
$d_\alpha x_{k+1}^\alpha$, and summing over $k=1,\dots,K-1$ telescopes to
\[
  \sum_{i\le K}x_i^\alpha-a_K^\alpha
  \ \ge\ d_\alpha\!\!\sum_{2\le i\le K}\!\!x_i^\alpha .
\]
Letting $K\to\infty$ and using $a_K\uparrow\sigma$ gives \eqref{eq:oneclass}.
For $\alpha>1$ the same computation applies with all three signs reversed.

\emph{Step 3: one fibre.} Fix $z$ in the image of $\pi$ and let
$x_1\ge x_2\ge\cdots$ be the weights in $\pi^{-1}(z)$. A labelling $\lambda$
splits $\pi^{-1}(z)$ into nonempty classes $P_1,\dots,P_{g'}$ with $g'\le g$ and
sums $\sigma_1,\dots,\sigma_{g'}$. By Proposition~\ref{prop:basic}(1) applied
classwise, equivalently by sub- respectively superadditivity of
$t\mapsto t^\alpha$, the numbers
$\sum_{u\in P_c}w_u^\alpha-\sigma_c^\alpha$ all have the same sign, so
\[
  \Bigl\lvert\sum_{u\in\pi^{-1}(z)}w_u^\alpha
   -\sum_{c=1}^{g'}\sigma_c^\alpha\Bigr\rvert
  =\sum_{c=1}^{g'}\Bigl\lvert\sum_{u\in P_c}w_u^\alpha-\sigma_c^\alpha
    \Bigr\rvert
  \ \ge\ d_\alpha\sum_{c=1}^{g'}\ \sum_{u\in P_c\setminus\{\mu_c\}}w_u^\alpha,
\]
by \eqref{eq:oneclass}, where $\mu_c$ denotes a fixed element of $P_c$ of maximal
weight, chosen arbitrarily if there are ties, which
exists for the reason given in Step 2. The elements $\mu_1,\dots,\mu_{g'}$ are
distinct and $g'\le g$, so $\sum_cw_{\mu_c}^\alpha\le\sum_{j\le g}x_j^\alpha$
because $t\mapsto t^\alpha$ is increasing. Hence the last display is at least
$d_\alpha\sum_{j>g}x_j^\alpha$.

\emph{Step 4: summation.} All fibrewise defects have the same sign, so by
Tonelli's theorem the left side of \eqref{eq:collision} is the sum over $z$ of
the quantities estimated in Step 3, and \eqref{eq:collision} follows. The
finiteness hypothesis makes the left side an absolutely convergent difference:
for $\alpha<1$ subadditivity bounds the second sum by
$\sum_uw_u^\alpha<\infty$, and for $\alpha>1$ both sums are finite because the
weights are summable.

\emph{Optimality.} Fix $g\ge1$ and $\varepsilon\in(0,\frac12)$, and let $W$
carry one fibre of $\pi$ with weights $1,\dots,1$ ($g-1$ of them) together with
two weights equal to $\varepsilon$, all other fibres being singletons; rescale so
that the total is $1$. Among labellings by $g$ colours at least one pair of the
$g+1$ atoms of that fibre must share a colour. Merging the two atoms of weight
$\varepsilon$ costs exactly $d_\alpha\varepsilon^\alpha$ by the equality case of
\eqref{eq:twoatom}, whereas merging an atom of weight $1$ with one of weight
$\varepsilon$ costs $\lvert1+\varepsilon^\alpha-(1+\varepsilon)^\alpha\rvert$
and merging two atoms of weight $1$ costs $d_\alpha$, both of which exceed
$d_\alpha\varepsilon^\alpha$ for small $\varepsilon$; so the optimal labelling
merges exactly the two light atoms. Since the sorted fibre is
$1,\dots,1,\varepsilon,\varepsilon$, we have
$T_{\alpha,g}(\pi)=\varepsilon^\alpha$, and \eqref{eq:collision} is an equality.

\emph{Equality for $g=1$.} Here $\lambda$ is constant, Step 3 is lossless, and
equality in \eqref{eq:collision} means equality in \eqref{eq:oneclass} for every
nonsingleton fibre. Equality there forces equality at each telescoped step, that
is $a_k=x_{k+1}$ for every $k$ occurring. From $a_1=x_1$ we get $x_1=x_2$; if a
third element were present we would need $x_3=a_2=2x_1$, contradicting
$x_3\le x_1$. So the fibre has exactly two elements of equal weight; conversely
such a fibre gives $\lvert2(\sigma/2)^\alpha-\sigma^\alpha\rvert
=d_\alpha(\sigma/2)^\alpha$, an equality.
\end{proof}

For the rest of this section fix a map $f$ on $\Uc_h$ and a total order on
$\Uc_h$, and let
\begin{equation}\label{eq:Ig}
\begin{split}
  I_g=I_g(f):=\bigl\{u\in\Uc_h:\ &u\text{ is not among the }g\text{ heaviest}\\
   &\text{elements of its }f\text{-fibre}\bigr\},
\end{split}
\end{equation}
ties in the weights being broken by the fixed order. Thus
$\sum_{u\in I_g}w_u^\alpha=T_{\alpha,g}(f)$ and
$\sum_{u\in I_g}w_u=\sum_z\sum_{j>g}w_{z,j}$.

\begin{lemma}[Deletion]\label{lem:deletion}
Let $\mathcal A\subset A$ and
$I_{g,\mathcal A}=\{u\in I_g:u\subset\mathcal A\}$. Then $w_u\ge m_u^{\,h}$ for
every $u\in\Uc_h$, and there is an $(f,g)$-admissible set $B\subset\mathcal A$ with
\[
  \PR(X\in\mathcal A)-\PR(X\in B)\ \le\ \sum_{u\in I_{g,\mathcal A}}m_u .
\]
\end{lemma}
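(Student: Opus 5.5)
The first claim is immediate from \eqref{eq:wdef}. For $u$ with distinct elements $a_1,\dots,a_k$ of multiplicities $m_1,\dots,m_k$ we have $\nu_u\ge1$ and $p_{a_i}\ge m_u$ for each $i$. Since $\sum_im_i=h$, this gives $w_u\ge\prod_ip_{a_i}^{m_i}\ge m_u^{\,h}$.

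For the deletion bound I will use a greedy ``delete the lightest point of each bad multiset'' argument. For each $u\in I_{g,\mathcal A}$, choose a point $a(u)\in u$ with $p_{a(u)}=m_u$, breaking ties by a fixed order on $A$. Then set
\[
  B:=\mathcal A\setminus\{a(u):u\in I_{g,\mathcal A}\}.
\]
By countable subadditivity,
\[
  \PR(X\in\mathcal A)-\PR(X\in B)\le\sum_{u\in I_{g,\mathcal A}}p_{a(u)}=\sum_{u\in I_{g,\mathcal A}}m_u .
\]
Distinct $u$ may share the same deleted point, but that only helps the inequality.

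It remains to check that $B$ is $(f,g)$-admissible. Take $u\in\Uc_h(B)$. Then $u\subset B\subset\mathcal A$. If $u$ were in $I_g$, it would lie in $I_{g,\mathcal A}$, and $a(u)\in u$ would have been removed, contradicting $u\subset B$. So every multiset of $\Uc_h(B)$ lies outside $I_g$; that is, it is among the $g$ heaviest elements of its $f$-fibre in the sense of \eqref{eq:Ig}. Each $f$-fibre contains at most $g$ such elements, so each fibre meets $\Uc_h(B)$ in at most $g$ multisets. This is exactly $(f,g)$-admissibility.

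The only point needing care is that $I_g$ is defined with respect to the full fibres in $\Uc_h(A)$, not the fibres restricted to $\Uc_h(\mathcal A)$. I expect the admissibility check to be the main place to get right. The argument above survives this because the ``$g$ heaviest'' exemptions are fixed in advance, with ties broken by the fixed total order so that the exempt set per fibre has size exactly $\min(g,|\text{fibre}|)$. Restricting to $\mathcal A$ can only shrink each fibre's intersection with this exempt set, so it never exceeds $g$. Measurability and countability issues are trivial, since $A$ is countable.
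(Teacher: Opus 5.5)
Your proof is correct and is essentially the paper's argument: $w_u\ge m_u^{\,h}$ from \eqref{eq:wdef}, deleting one point of minimal probability from each $u\in I_{g,\mathcal A}$, and admissibility because every surviving multiset lies outside $I_g$ and is therefore among the $g$ heaviest of its $f$-fibre. Your remark that $I_g$ is defined using the full fibres of $f$ on $\Uc_h(A)$, with the exempt set fixed in advance, is exactly why the admissibility step goes through, as in the paper.
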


\begin{proof}
By \eqref{eq:wdef}, $w_u\ge\prod_ip_{a_i}^{m_i}\ge m_u^{\sum_im_i}=m_u^h$. For
each $u\in I_{g,\mathcal A}$ pick an element of $u$ of minimal probability and
let $D$ be the set of picked points, so
$\PR(X\in D)\le\sum_{I_{g,\mathcal A}}m_u$; put $B=\mathcal A\setminus D$. Every
$u\in\Uc_h(B)$ has all its elements in $\mathcal A$ and avoids $D$, so
$u\notin I_g$, that is, $u$ is among the $g$ heaviest elements of its $f$-fibre.
Each fibre of $f$ therefore meets $\Uc_h(B)$ in at most $g$ multisets, so $B$ is
$(f,g)$-admissible.
\end{proof}

\begin{lemma}[R\'enyi entropy budget estimates]\label{lem:tail}
Let $A_\tau=\{a:p_a\ge\tau\}$, so $\lvert A_\tau\rvert\le\lfloor1/\tau\rfloor$.
\begin{enumerate}
\item If $0<\beta<1$ and $H_\beta(X)\le D$ then $P_\beta\le e^{(1-\beta)D}$,
\[
  \PR(X\notin A_\tau)\le e^{(1-\beta)D}\tau^{1-\beta},
  \quad\text{and}\quad
  P_\alpha\le P_\beta\ \text{ for every }\alpha\ge\beta .
\]
\item If $H(X)\le D<\infty$ then $\PR(X\notin A_\tau)\le D/\log(1/\tau)$, and
$P_\alpha\le1$ for every $\alpha\ge1$.
\item If $\beta>1$ and $H_\beta(X)\le D$ then $P_\beta\ge e^{-(\beta-1)D}$ and
$\sum_{a\notin A_\tau}p_a^\beta\le\tau^{\beta-1}$.
\end{enumerate}
\end{lemma}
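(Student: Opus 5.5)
The plan is to prove the three parts separately. Each one comes from an elementary comparison between $p_a^r$ and a threshold, together with the definition $H_\beta=\log P_\beta/(1-\beta)$. The bound $\lvert A_\tau\rvert\le\lfloor1/\tau\rfloor$ is immediate: $1\ge\sum_{a\in A_\tau}p_a\ge\tau\lvert A_\tau\rvert$, and the cardinality is an integer.

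For part (1), with $0<\beta<1$, the budget $H_\beta(X)\le D$ reads $\log P_\beta\le(1-\beta)D$, since $1-\beta>0$, so $P_\beta\le e^{(1-\beta)D}$. For the tail, every $a\notin A_\tau$ has $p_a<\tau$. Hence
\[
p_a=p_a^\beta p_a^{1-\beta}\le\tau^{1-\beta}p_a^\beta ,
\]
and summing over $a\notin A_\tau$ gives $\PR(X\notin A_\tau)\le\tau^{1-\beta}P_\beta$. For $\alpha\ge\beta$ we have $p_a\le1$, so $p_a^\alpha\le p_a^\beta$ termwise, which gives $P_\alpha\le P_\beta$.

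For part (2), every atom outside $A_\tau$ satisfies $\log(1/p_a)>\log(1/\tau)$. All terms $p_a\log(1/p_a)$ are nonnegative, so
\[
D\ge H(X)\ge\sum_{a\notin A_\tau}p_a\log(1/p_a)\ge\log(1/\tau)\,\PR(X\notin A_\tau).
\]
This bound is meaningful for $\tau<1$. The claim $P_\alpha\le1$ for $\alpha\ge1$ follows from $p_a^\alpha\le p_a$.

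For part (3), with $\beta>1$, we have $1-\beta<0$, so $\log P_\beta/(1-\beta)\le D$ rearranges to $\log P_\beta\ge-(\beta-1)D$. For the tail, $p_a^\beta=p_a\,p_a^{\beta-1}\le\tau^{\beta-1}p_a$ when $p_a<\tau$. Summing and using total mass at most $1$ gives $\sum_{a\notin A_\tau}p_a^\beta\le\tau^{\beta-1}$.

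There is no real obstacle here. The only points needing care are the sign flip of $1-\beta$ when inverting the definition of $H_\beta$, and the fact that $H_\beta\le D$ implicitly makes $P_\beta$ finite for $\beta<1$, so that all the sums above converge.
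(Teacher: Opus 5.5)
Your proposal is correct and follows the paper's proof essentially line for line. It uses the same below-threshold comparisons, $p_a\le\tau^{1-\beta}p_a^{\beta}$ in part (1) and $p_a^{\beta}\le\tau^{\beta-1}p_a$ in part (3), the same truncation of the Shannon sum in part (2), and the same sign handling when inverting $H_\beta=\log P_\beta/(1-\beta)$. You also supply the one-line justification of $\lvert A_\tau\rvert\le\lfloor1/\tau\rfloor$, which the paper leaves implicit.
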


\begin{proof}
(1) $H_\beta(X)\le D$ means $\log P_\beta\le(1-\beta)D$; and $p_a<\tau$ gives
$p_a\le p_a^\beta\tau^{1-\beta}$, so summing yields the tail bound. For the last
assertion, $p_a\le1$ and $\alpha\ge\beta$ give $p_a^\alpha\le p_a^\beta$
termwise.
(2) For $a\notin A_\tau$ we have $\log(1/p_a)\ge\log(1/\tau)$, so
$D\ge H(X)\ge\sum_{a\notin A_\tau}p_a\log(1/p_a)
\ge\log(1/\tau)\PR(X\notin A_\tau)$; and $p_a^\alpha\le p_a$ for $\alpha\ge1$.
(3) Now $1/(1-\beta)<0$, so $H_\beta(X)\le D$ reads
$\log P_\beta\ge(1-\beta)D$. And for $a\notin A_\tau$,
$p_a^\beta=p_a\,p_a^{\beta-1}\le p_a\tau^{\beta-1}$, so summing gives the second
bound.
\end{proof}

\begin{lemma}[Excess-fibre power sum]\label{lem:mass}
Let $\alpha\ne1$. Then
\begin{equation}\label{eq:LDelta}
  \Lc_{\alpha,g}(f)
  =M_\alpha(U_h)\,\bigl\lvert1-e^{-(1-\alpha)\Delta^{[g]}_{\alpha,f}(X)}
   \bigr\rvert
  \qquad\text{and}\qquad
  T_{\alpha,g}(f)\ \le\ \frac{\Lc_{\alpha,g}(f)}{d_\alpha} .
\end{equation}
Consequently, let $\Delta^{[g]}_{\alpha,f}(X)\le C$ and put
\begin{equation}\label{eq:Kall}
  K_{\alpha,\beta,D,h}(C):=\begin{cases}
   \dfrac{\bigl(1-e^{-(1-\alpha)C}\bigr)e^{h(1-\beta)D}}{d_\alpha},
     & \alpha<1,\\[3mm]
   \dfrac{C}{\log2}, & \alpha=1,\\[3mm]
   \dfrac{(h!)^{\alpha-1}\bigl(e^{(\alpha-1)C}-1\bigr)}{d_\alpha},
     & \alpha>1 .
  \end{cases}
\end{equation}
Then $K_{\alpha,\beta,D,h}(C)=O(C)$ and
\begin{equation}\label{eq:K}
  \sum_{u\in I_g}w_u^{\,\alpha}\ \le\ K_{\alpha,\beta,D,h}(C) .
\end{equation}
For $\alpha<1$ this requires $H_\beta(X)\le D$ for some $\beta\le\alpha$. For
$\alpha=1$ it requires only $H(X)<\infty$, which is in any case part of the
standing finiteness hypothesis of Proposition~\ref{prop:basic} at that order, and
for $\alpha>1$ it requires nothing about $X$ at all. Thus for $\alpha\ge1$ the
bound is budget-free and $K_{\alpha,\beta,D,h}$ depends on neither $\beta$ nor
$D$; we then abbreviate it $K_{\alpha,h}$.
\end{lemma}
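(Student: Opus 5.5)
The identity in \eqref{eq:LDelta} comes from Proposition~\ref{prop:basic}(3), applied to each refined map $(f,\chi)$, and the inequality from Theorem~\ref{thm:coarse} with $\pi=f$ on $W=\Uc_h$. The bound \eqref{eq:K} then follows by bounding $M_\alpha(U_h)$ in each regime.

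\emph{Step 1: the identity.} For a fixed labelling $\chi$, Proposition~\ref{prop:basic}(3) with $\pi=(f,\chi)$ gives
\[
M_\alpha((f,\chi)(U_h))=M_\alpha(U_h)e^{-(1-\alpha)\Delta_{\alpha,(f,\chi)}(X)}.
\]
Hence the quantity inside the infimum in \eqref{eq:Lg} equals
\[
M_\alpha(U_h)\bigl\lvert1-e^{-(1-\alpha)\Delta_{\alpha,(f,\chi)}}\bigr\rvert.
\]
For either sign of $1-\alpha$, the map $\Delta\mapsto\lvert1-e^{-(1-\alpha)\Delta}\rvert$ is increasing on $[0,\infty)$. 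So the infimum over $\chi$ passes inside and yields $\Delta^{[g]}_{\alpha,f}$. There is a minor point: the infimum is over finitely or countably many $\chi$, but monotonicity and continuity handle this without the infimum needing to be attained.

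\emph{Step 2: the inequality.} Theorem~\ref{thm:coarse} holds for every labelling $\lambda$, and its right side $d_\alpha T_{\alpha,g}(f)$ does not depend on $\lambda$. Taking the infimum over $\lambda$ therefore gives $d_\alpha T_{\alpha,g}(f)\le\Lc_{\alpha,g}(f)$. The summability hypothesis $\sum_u w_u^\alpha=M_\alpha(U_h)<\infty$ is the standing assumption. Combined with $\sum_{u\in I_g}w_u^\alpha=T_{\alpha,g}(f)$, this gives \eqref{eq:K} once $\Lc$ is bounded.

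\emph{Step 3: the three cases.}

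For $\alpha<1$, use $\Delta^{[g]}\le C$. Then $\Lc\le M_\alpha(U_h)(1-e^{-(1-\alpha)C})$. Proposition~\ref{prop:MU} gives $M_\alpha(U_h)\le P_\alpha^h$, and Lemma~\ref{lem:tail}(1) gives $P_\alpha\le P_\beta\le e^{(1-\beta)D}$ when $\beta\le\alpha$ and $\beta<1$. (If instead $\beta\ge1$ with $\beta\le\alpha<1$, this case is void.)

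For $\alpha>1$, we have $\Lc\le M_\alpha(U_h)(e^{(\alpha-1)C}-1)$. Proposition~\ref{prop:MU} gives $M_\alpha(U_h)\le(h!)^{\alpha-1}P_\alpha^h$, and $P_\alpha\le1$ always holds. No budget is needed here.

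In both cases $K=O(C)$, since $1-e^{-x}\le x$ and $e^{x}-1=O(x)$ for small $x$.

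\emph{Main obstacle: $\alpha=1$.} Here $d_1=0$ and Theorem~\ref{thm:coarse} is unavailable. I would instead prove a Shannon analogue directly, presumably the content of Lemma~\ref{lem:shannonmass}. For any $\chi$,
\[
\Delta_{1,(f,\chi)}=H(U_h\mid f(U_h),\chi(U_h))=\sum_{(z,c)}\sigma_{z,c}H(\text{class}).
\]
Within a class of mass $\sigma$ with largest element $x_1$, the entropy is at least $(\log 2)(\sigma-x_1)/\sigma$ times $\sigma$; the per-class contribution is at least $(\log2)(\sigma-x_1)$. This holds because a distribution with maximal atom $\mu\le1$ has entropy at least $(1-\mu)\log 2$; for $\mu\ge1/2$ use binary entropy $\eta(\mu)\ge 2(1-\mu)\log 2\cdot\tfrac12$, and otherwise use $H\ge\log(1/\mu)\ge\log2\ge(1-\mu)\log2$. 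As in Step 3 of Theorem~\ref{thm:coarse}, summing over the at most $g$ classes exempts at most $g$ elements per fibre. This gives
\[
(\log2)\sum_{u\in I_g}w_u\le\Delta^{[g]}_{1,f}\le C,
\]
which is \eqref{eq:K} with $K=C/\log 2$. Only $H(X)<\infty$ is used, to make the conditional entropies finite.
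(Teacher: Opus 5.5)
Your proof is correct and follows the paper's argument. The identity comes from Proposition~\ref{prop:basic}(3) together with monotonicity in $\Delta$. The inequality comes from Theorem~\ref{thm:coarse}, whose right side does not depend on $\lambda$. The bounds on $M_\alpha(U_h)$ come from Proposition~\ref{prop:MU} and Lemma~\ref{lem:tail}. For $\alpha=1$ you use the Shannon argument of Lemma~\ref{lem:shannonmass}; there your per-class bound $(1-\mu)\log2$ is just the paper's pointwise estimate $\log(\sigma(u)/w_u)\ge\log2$ for non-maximal $u$, summed over a class.

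One harmless slip: the set of labellings $\chi$ is uncountable when $\Uc_h$ is infinite and $g\ge2$. This does not matter, since a continuous increasing function commutes with an infimum over any index set.
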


\begin{proof}
For each $\chi$, Proposition~\ref{prop:basic}(3) applied to $\pi=(f,\chi)$ gives
\[
  \bigl\lvert M_\alpha(U_h)-M_\alpha\bigl((f,\chi)(U_h)\bigr)\bigr\rvert
  =M_\alpha(U_h)\bigl\lvert1-e^{-(1-\alpha)\Delta_{\alpha,(f,\chi)}(X)}
   \bigr\rvert ,
\]
which is an increasing function of $\Delta_{\alpha,(f,\chi)}(X)$ for
$\alpha<1$ and for $\alpha>1$ alike; taking the infimum over $\chi$ on both
sides gives the first half of \eqref{eq:LDelta}. The second half is
Theorem~\ref{thm:coarse} with $\pi=f$, $\lambda=\chi$, followed by the infimum
over $\chi$, the right-hand side $d_\alpha T_{\alpha,g}(f)$ being independent of
$\chi$.

For $\alpha<1$ and $\Delta^{[g]}_{\alpha,f}\le C$ we get
$\Lc_{\alpha,g}(f)\le M_\alpha(U_h)(1-e^{-(1-\alpha)C})$, and, since then
$\beta\le\alpha<1$,
$M_\alpha(U_h)\le P_\alpha^h\le P_\beta^h\le e^{h(1-\beta)D}$ by
Proposition~\ref{prop:MU} and Lemma~\ref{lem:tail}(1); now use $1-e^{-x}\le x$
to see that the first line of \eqref{eq:Kall} is $O(C)$. For $\alpha>1$ use
$\Lc_{\alpha,g}(f)\le M_\alpha(U_h)(e^{(\alpha-1)C}-1)$ together with
$M_\alpha(U_h)\le(h!)^{\alpha-1}P_\alpha^h\le(h!)^{\alpha-1}$, valid since
$P_\alpha\le1$ for $\alpha\ge1$; no budget is needed in this case. The case
$\alpha=1$ is Lemma~\ref{lem:shannonmass}, where the exponent on $w_u$ is $1$.
\end{proof}

\begin{lemma}[Shannon excess-fibre sum]\label{lem:shannonmass}
Let $H(X)<\infty$ and $\Delta^{[g]}_{1,f}(X)\le C$. Then
\[
  \sum_{u\in I_g}w_u\ \le\ \frac{C}{\log2} .
\]
\end{lemma}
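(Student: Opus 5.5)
The plan is to mimic Steps 3--4 of the proof of Theorem~\ref{thm:coarse}, with the two-atom inequality \eqref{eq:twoatom} replaced by a direct pointwise bound on the Shannon conditional entropy.

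First fix a labelling $\chi:\Uc_h\to\{1,\dots,g\}$ and put $\pi=(f,\chi)$. Since $H(X)<\infty$, we have $H(U_h)\le hH(X)<\infty$. So, exactly as in the proof of Proposition~\ref{prop:basic}(1),
\[
  \Delta_{1,\pi}(X)=H\bigl(U_h\mid\pi(U_h)\bigr)
  =\sum_{(z,c)}\ \sum_{u\in P_{z,c}} w_u\log\frac{\sigma_{z,c}}{w_u},
\]
where $P_{z,c}=\{u:f(u)=z,\ \chi(u)=c\}$ and $\sigma_{z,c}=\sum_{u\in P_{z,c}}w_u$. Every term in this sum is nonnegative, so it may be rearranged freely (Tonelli).

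The key pointwise estimate concerns a single class $P_{z,c}$. Let $\mu_{z,c}$ be an element of maximal weight in the class; it exists because only finitely many weights exceed any positive threshold. For every other $u\in P_{z,c}$ we have $w_u\le w_{\mu_{z,c}}$ and $w_u+w_{\mu_{z,c}}\le\sigma_{z,c}$, so $w_u\le\sigma_{z,c}/2$ and hence $\log(\sigma_{z,c}/w_u)\ge\log2$. Dropping the nonnegative term of $\mu_{z,c}$ gives
\[
  \sum_{u\in P_{z,c}}w_u\log\frac{\sigma_{z,c}}{w_u}\ \ge\ \log2\sum_{u\in P_{z,c}\setminus\{\mu_{z,c}\}}w_u .
\]
This is the Shannon analogue of \eqref{eq:oneclass}, with constant $\log 2$.

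Next we sum over the classes inside one $f$-fibre $f^{-1}(z)$. The retained maxima $\mu_{z,c}$ are distinct and there are at most $g$ of them. So their total weight is at most $\sum_{j\le g}w_{z,j}$, and the weight discarded from the fibre is at least $\sum_{j>g}w_{z,j}$. Summing over $z$ yields
\[
  \Delta_{1,(f,\chi)}(X)\ \ge\ \log2\sum_z\sum_{j>g}w_{z,j}=\log2\sum_{u\in I_g}w_u ,
\]
using the identity for $\sum_{u\in I_g}w_u$ recorded after \eqref{eq:Ig}. The right side does not depend on $\chi$. Taking the infimum over $\chi$ then gives $\log2\sum_{I_g}w_u\le\Delta^{[g]}_{1,f}(X)\le C$, which is the claim.

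I expect no real obstacle here. The only points needing care are these: the justification of the conditional-entropy formula, which rests on the finiteness of $H(U_h)$; the existence of class maxima in countable classes; and the passage to the infimum, which is harmless because the lower bound is uniform in $\chi$.
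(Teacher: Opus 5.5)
Your proposal is correct and follows the paper's proof essentially step for step. The paper also writes $\Delta_{1,(f,\chi)}(X)$ as the conditional entropy $\E[\log(\sigma(U_h)/w_{U_h})]$, bounds the integrand below by $\log2$ off the class maxima, and uses the fact that the at most $g$ class maxima in each $f$-fibre weigh no more than the $g$ heaviest elements. Your choice of one fixed maximum $\mu_{z,c}$ per class handles ties slightly more explicitly than the paper's wording.
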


\begin{proof}
Fix $\chi$ and write $\sigma(u)$ for the total weight of the class of $u$, that
is of $\{u':f(u')=f(u),\ \chi(u')=\chi(u)\}$. Since
$H\bigl((f,\chi)(U_h)\bigr)\le H(U_h)\le hH(X)<\infty$,
\begin{equation}\label{eq:shannonid}
  \Delta_{1,(f,\chi)}(X)=H\bigl(U_h\mid(f,\chi)(U_h)\bigr)
  =\E\Bigl[\log\frac{\sigma(U_h)}{w_{U_h}}\Bigr],
\end{equation}
and the integrand is nonnegative. If $u$ is not a heaviest element of its class
then $\sigma(u)\ge2w_u$, so the integrand is at least $\log2$ there. In each
fibre of $f$ the heaviest elements of the at most $g$ classes are at most $g$
distinct multisets, so their total weight is at most that of the $g$ heaviest
elements of the fibre; hence the elements that are \emph{not} class maxima have
total weight at least $\sum_{j>g}w_{z,j}$ in each fibre. Therefore
$\Delta_{1,(f,\chi)}(X)\ge(\log2)\sum_{u\in I_g}w_u$ for every $\chi$, and we
may take the infimum.
\end{proof}

\section{The removal principle}\label{sec:upper}

\begin{theorem}[Weighted collision-removal principle]\label{thm:general}
Let $h\ge2$, $g\ge1$, let $f$ be any map on $\Uc_h$, let $\alpha,\beta>0$, and
suppose $\Delta^{[g]}_{\alpha,f}(X)\le C$, where $\Delta^{[g]}_{\alpha,f}$ is
the $g$-list deficit \eqref{eq:listdeficit} of $f$; parts (1)--(3) assume
$\beta\le\alpha$, while part (4) assumes only $\alpha\ge1$ and $\beta>1$. Write
$K_{\alpha,\beta,D,h}(C)$ for the quantity \eqref{eq:Kall}, which is
$O_{\alpha,\beta,D,h}(C)$.
\begin{enumerate}
\item If $h\alpha\le1$ then
$\delta_{f,g}(X)\le\bigl(\Lc_{\alpha,g}(f)/d_\alpha\bigr)^{1/(h\alpha)}$, with no
budget at all; if moreover $H_\beta(X)\le D$ then
$\delta_{f,g}(X)=O_{\alpha,\beta,D,h}\bigl(C^{1/(h\alpha)}\bigr)$.
\item If $h\alpha>1$, $\beta<1$ and $H_\beta(X)\le D$, then for every
$\tau\in(0,1)$
\[
  \delta_{f,g}(X)\le e^{(1-\beta)D}\tau^{1-\beta}
   +K_{\alpha,\beta,D,h}(C)\,\tau^{-(h\alpha-1)},
\]
and optimizing $\tau$ gives
$\delta_{f,g}(X)=O_{\alpha,\beta,D,h}
\bigl(C^{(1-\beta)/(h\alpha-\beta)}\bigr)$.
\item If $h\alpha>1$, $\beta=1$ and $H(X)\le D<\infty$, then for every
$\tau\in(0,1)$
\[
  \delta_{f,g}(X)\le\frac{D}{\log(1/\tau)}
   +K_{\alpha,1,D,h}(C)\,\tau^{-(h\alpha-1)},
\]
and hence $\delta_{f,g}(X)\le(1+o(1))(h\alpha-1)D/\log(1/C)$ as $C\to0$.
\item If $\alpha\ge1$, $\beta>1$ and $H_\beta(X)\le D$, where $\alpha\ge\beta$
is \emph{not} assumed, then
\[
  \delta_{f,g}(X)\ \le\ 1-e^{-(\beta-1)D/\beta}
   +O_{\alpha,\beta,D,h}\bigl(C^{\min\{1,\,(\beta-1)/(h\alpha-1)\}}\bigr).
\]
\end{enumerate}
\end{theorem}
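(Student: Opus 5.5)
The plan is to combine two earlier results. Lemma~\ref{lem:mass} (with Lemma~\ref{lem:shannonmass} at $\alpha=1$) bounds the excess-fibre power sum $\sum_{u\in I_g}w_u^\alpha$ by $\Lc_{\alpha,g}(f)/d_\alpha$, and hence by $K_{\alpha,\beta,D,h}(C)=O(C)$. The Deletion Lemma~\ref{lem:deletion} converts that bound into deleted mass, via $\sum_{u\in I_{g,\mathcal A}}m_u$ and $w_u\ge m_u^{h}$. The four parts differ only in how $m_u$ is compared with $w_u^\alpha$ and in the choice of the retained set $\mathcal A$.

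\emph{Part (1).} Take $\mathcal A=A$. From $m_u\le w_u^{1/h}=(w_u^\alpha)^{1/(h\alpha)}$, and since $1/(h\alpha)\ge1$ makes $t\mapsto t^{1/(h\alpha)}$ superadditive, I get
\[
\sum_{u\in I_g}m_u\le\Bigl(\sum_{u\in I_g}w_u^\alpha\Bigr)^{1/(h\alpha)}=T_{\alpha,g}(f)^{1/(h\alpha)}\le\bigl(\Lc_{\alpha,g}(f)/d_\alpha\bigr)^{1/(h\alpha)} .
\]
This is budget-free. Note that $h\alpha\le1$ forces $\alpha<1$, so under $H_\beta\le D$ with $\beta\le\alpha$ the quantity $K$ from \eqref{eq:Kall} is available and gives $O(C^{1/(h\alpha)})$.

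\emph{Parts (2) and (3).} Take $\mathcal A=A_\tau$. The mass outside is bounded by Lemma~\ref{lem:tail}(1), giving $e^{(1-\beta)D}\tau^{1-\beta}$, or by Lemma~\ref{lem:tail}(2), giving $D/\log(1/\tau)$. For $u\subset A_\tau$ we have $m_u\ge\tau$, and since $h\alpha>1$,
\[
m_u=m_u^{h\alpha}m_u^{1-h\alpha}\le w_u^\alpha\tau^{-(h\alpha-1)} .
\]
Summing over $I_{g,A_\tau}$ gives the second term $K\tau^{-(h\alpha-1)}$; at $\alpha=1$ the same computation works with $w_u$ in place of $w_u^\alpha$, matching Lemma~\ref{lem:shannonmass}. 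Then I optimize $\tau$.
\begin{itemize}
\item For $\beta<1$, balancing $\tau^{1-\beta}$ against $C\tau^{-(h\alpha-1)}$ gives $\tau=C^{1/(h\alpha-\beta)}$, and hence the bound $C^{(1-\beta)/(h\alpha-\beta)}$.
\item For $\beta=1$, take $\tau^{h\alpha-1}=C\log^2(1/C)$. The second term is then $O(1/\log^2(1/C))$, and the first is $D(h\alpha-1)/(\log(1/C)-O(\log\log(1/C)))$, which yields the constant $(1+o(1))(h\alpha-1)D/\log(1/C)$.
\end{itemize}

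\emph{Part (4).} Here $\alpha\ge1$, so $K_{\alpha,h}=O(C)$ is budget-free, and the argument again uses $\mathcal A=A_\tau$ together with $m_u\le w_u^\alpha\tau^{-(h\alpha-1)}$. The new ingredient is a lower bound on $\PR(X\in A_\tau)$. Put $t=\PR(X\notin A_\tau)$. Lemma~\ref{lem:tail}(3) gives $P_\beta\ge e^{-(\beta-1)D}$, and superadditivity of $t\mapsto t^\beta$ gives $\sum_{A_\tau}p_a^\beta\le(1-t)^\beta$, while the tail contributes at most $\tau^{\beta-1}t$. Hence
\[
(1-t)^\beta+\tau^{\beta-1}t\ge e^{-(\beta-1)D},
\]
so $t\le 1-e^{-(\beta-1)D/\beta}+O(\tau^{\beta-1})$.

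The main obstacle is matching the stated error exponent $\min\{1,(\beta-1)/(h\alpha-1)\}$. Naively balancing $\tau^{\beta-1}$ against $C\tau^{-(h\alpha-1)}$ only gives $C^{(\beta-1)/(h\alpha+\beta-2)}$. To improve this I would stop discarding the whole tail. I would delete only from multisets in $I_g$, splitting each such $u$ according to whether it meets the tail. Multisets inside $A_\tau$ are handled as above. For multisets meeting the tail, rather than charging the full tail mass $t$, I would charge only the excess over the extremal profile $1-e^{-(\beta-1)D/\beta}$. I would then choose $\tau$ so that the error term in the first bound becomes $\tau^{\beta-1}$ relative to $C$ alone, i.e. $\tau^{h\alpha-1}\asymp C$. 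This gives $C^{(\beta-1)/(h\alpha-1)}$ when that exponent is below $1$; otherwise a fixed constant $\tau$ already leaves an $O(C)$ error.
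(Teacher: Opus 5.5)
Parts (1)--(3) are correct and follow the paper's proof essentially verbatim: the deletion lemma with $\mathcal A=A$, respectively $\mathcal A=A_\tau$, the comparison $m_u\le w_u^\alpha\tau^{-(h\alpha-1)}$, and the same choices of $\tau$.

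Part (4), however, has a genuine gap. Your direct argument correctly yields only the error $C^{(\beta-1)/(h\alpha+\beta-2)}$, and the proposed repair does not work as stated. You keep charging the atoms deleted inside $A_\tau$ linearly, by $\sum_{u\in I_{g,A_\tau}}m_u\le K\tau^{-(h\alpha-1)}$. With your choice $\tau^{h\alpha-1}\asymp C$ this term is only $O(1)$, not small. The bottleneck is therefore the deletion inside $A_\tau$, not the tail; in the paper the tail is discarded entirely, since $B\subset A_\tau$. So ``charging only the excess over the extremal profile'' for multisets meeting the tail cannot fix it. Likewise, in the case $\beta\ge h\alpha$ a fixed $\tau$ leaves a tail term $\tau^{\beta-1}$ that is a constant, not $O(C)$.

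The missing idea is to measure the \emph{retained} set $B=A_\tau\setminus D$, rather than $A_\tau$, in the $\beta$-th power sum, and to charge the deleted atoms in that same currency. With $R=e^{-(\beta-1)D}\le P_\beta$,
\[
  \PR(X\in B)^\beta\ \ge\ \sum_{a\in B}p_a^\beta\ \ge\ P_\beta-\sum_{a\notin A_\tau}p_a^\beta-\sum_{u\in I_{g,A_\tau}}m_u^\beta\ \ge\ R-\tau^{\beta-1}-\sum_{u\in I_{g,A_\tau}}m_u^\beta .
\]
Now bound each deleted atom by
\[
  m_u^\beta=m_u^{h\alpha}m_u^{\beta-h\alpha}\le w_u^\alpha\tau^{-(h\alpha-\beta)_+},
\]
using $m_u\ge\tau$ if $\beta\le h\alpha$ and $m_u\le1$ otherwise. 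This gains a factor $\tau^{\beta-1}$ over your linear bound $m_u\le w_u^\alpha\tau^{-(h\alpha-1)}$. Then choose
\[
  \tau=C^{1/(h\alpha-1)}\ \text{ when }\beta\le h\alpha,\qquad \tau=C^{1/(\beta-1)}\ \text{ when }\beta>h\alpha .
\]
Both errors become $O(C^{\min\{1,(\beta-1)/(h\alpha-1)\}})$. Lipschitz continuity of $x\mapsto x^{1/\beta}$ on $[R/2,1]$ then gives the stated bound $1-R^{1/\beta}+O(\cdot)$.

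You were one step away: you already used superadditivity of $t\mapsto t^\beta$ on $A_\tau$. Apply it to $B$ instead, and subtract the $\beta$-mass of the deleted set.
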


A small R\'enyi coarsening loss therefore forces every fibre of $f$ to shrink to
at most $g$ multisets after deleting a controlled amount of probability mass. The
mechanism is uniform: delete inside the set of atoms of weight at least $\tau$,
pay the tail, and trade the two errors against each other. What changes with
$\beta$ is only what the budget buys. For $\beta<1$ it bounds the tail mass by
$\tau^{1-\beta}$; at $\beta=1$ only by $1/\log(1/\tau)$, which is why the rate
degenerates to logarithmic; and for $\beta>1$ it bounds no tail mass at all, but
it does bound the $\beta$-th power sum from below, and that is enough to keep a
fixed fraction of the weight, never all of it. Additive structure enters only
through the constructions of Section~\ref{sec:lower}, which show that ordinary
integer addition already realizes every exponent in \eqref{eq:theta}.

\begin{proof}[Proof of Theorem~\ref{thm:rawmoment}]
Apply Lemma~\ref{lem:deletion} with $\mathcal A=A$, so that
$\delta_{f,g}(X)\le\sum_{u\in I_g}m_u$. Since $h\alpha\le1$ the function
$t\mapsto t^{h\alpha}$ is subadditive, and $m_u^{h\alpha}\le w_u^\alpha$ because
$w_u\ge m_u^h$; hence
\[
  \Bigl(\sum_{u\in I_g}m_u\Bigr)^{h\alpha}\le\sum_{u\in I_g}m_u^{h\alpha}
  \le\sum_{u\in I_g}w_u^\alpha=T_{\alpha,g}(f)
  \le\frac{\Lc_{\alpha,g}(f)}{d_\alpha} ,
\]
the last step by the second half of \eqref{eq:LDelta}.
\end{proof}

\begin{proof}[Proof of Theorem~\ref{thm:general}]
Throughout write $K=K_{\alpha,\beta,D,h}(C)$ for the quantity
\eqref{eq:Kall}. Thus $K=O(C)$, and \eqref{eq:K} bounds
$\sum_{u\in I_g}w_u^\alpha$ by $K$. Note also that for $u\in I_{g,A_\tau}$
every element of $u$ has probability at least $\tau$, so $m_u\ge\tau$; as
$h\alpha>1$ in parts (2)--(4), this gives
\begin{equation}\label{eq:mubound}
  m_u=m_u^{h\alpha}\,m_u^{1-h\alpha}\ \le\ w_u^{\,\alpha}\,\tau^{-(h\alpha-1)}
\end{equation}
by $w_u\ge m_u^h$.

(1) The first assertion is Theorem~\ref{thm:rawmoment}. For the second, the
chain displayed in the proof of that theorem gives
$\delta_{f,g}(X)^{h\alpha}\le T_{\alpha,g}(f)$ directly, without passing through
$\Lc_{\alpha,g}(f)$, and $T_{\alpha,g}(f)=\sum_{u\in I_g}w_u^\alpha\le K$ by
\eqref{eq:K}; hence $\delta_{f,g}(X)\le K^{1/(h\alpha)}$, which is
$O_{\alpha,\beta,D,h}(C^{1/(h\alpha)})$.

(2) Fix $\tau\in(0,1)$ and apply Lemma~\ref{lem:deletion} with
$\mathcal A=A_\tau$, which is finite. Then
$\delta_{f,g}(X)\le\PR(X\notin A_\tau)+\sum_{u\in I_{g,A_\tau}}m_u$, the first
term being at most $e^{(1-\beta)D}\tau^{1-\beta}$ by Lemma~\ref{lem:tail}(1) and
the second at most $K\tau^{-(h\alpha-1)}$ by \eqref{eq:mubound} and
\eqref{eq:K}. This is the displayed bound. Writing $A_1=e^{(1-\beta)D}$ and
choosing $\tau=(K/A_1)^{1/(h\alpha-\beta)}$, which lies in $(0,1)$ once
$K<A_1$, makes the two terms equal, because
\begin{equation}\label{eq:balance}
  (1-\beta)+(h\alpha-1)=h\alpha-\beta>0 ;
\end{equation}
each then equals $A_1(K/A_1)^{(1-\beta)/(h\alpha-\beta)}$, which is
$O_{\alpha,\beta,D,h}\bigl(C^{(1-\beta)/(h\alpha-\beta)}\bigr)$.

(3) The same argument with the Shannon tail bound Lemma~\ref{lem:tail}(2) in
place of the R\'enyi one gives the displayed bound. Writing $L=\log(1/C)$ and
taking $\tau=(CL^2)^{1/(h\alpha-1)}$, which lies in $(0,1)$ once $CL^2<1$, we
get $\tau^{h\alpha-1}=CL^2$ and $\log(1/\tau)=(L-2\log L)/(h\alpha-1)$, so,
since $K=O(C)$,
\begin{equation}\label{eq:critrate}
  \delta_{f,g}(X)\le\frac{(h\alpha-1)D}{L-2\log L}+O\Bigl(\frac{1}{L^2}\Bigr)
  =(1+o(1))\frac{(h\alpha-1)D}{L}.
\end{equation}

(4) Here $\alpha\ge1$ and $\beta>1$, and $\alpha\ge\beta$ is not assumed; note
that $K=K_{\alpha,h}$ needs no budget, by the last sentence of
Lemma~\ref{lem:mass}. Lemma~\ref{lem:tail}(1) is unavailable: the
budget bounds no tail mass. It does, however, bound a power sum from below. Put
$R=e^{-(\beta-1)D}$, so that $P_\beta\ge R$ by Lemma~\ref{lem:tail}(3). We first
convert that lower bound on $P_\beta$ into a lower bound on the mass retained by
$B$. Fix
$\tau\in(0,1)$ and let $B\subset A_\tau$ be the $(f,g)$-admissible set produced
by Lemma~\ref{lem:deletion} with $\mathcal A=A_\tau$, obtained by deleting one
element of minimal probability from each $u\in I_{g,A_\tau}$. Then
\[
  \sum_{a\in B}p_a^{\,\beta}\ \ge\ P_\beta-\sum_{a\notin A_\tau}p_a^{\,\beta}
   -\sum_{u\in I_{g,A_\tau}}m_u^{\,\beta}
  \ \ge\ R-\tau^{\beta-1}-\sum_{u\in I_{g,A_\tau}}m_u^{\,\beta}
\]
by Lemma~\ref{lem:tail}(3). For $u\in I_{g,A_\tau}$ we have $m_u\ge\tau$ and $m_u\le1$, so
$m_u^{\,\beta}=m_u^{h\alpha}m_u^{\,\beta-h\alpha}
\le w_u^{\,\alpha}\tau^{-(h\alpha-\beta)_+}$, where $x_+=\max\{x,0\}$: if
$\beta\le h\alpha$ use $m_u\ge\tau$ and if $\beta>h\alpha$ use $m_u\le1$.
Hence that last sum is at most $K\tau^{-(h\alpha-\beta)_+}$. Since $\beta>1$ and $p_a\le\sum_{a'\in B}p_{a'}$ for
$a\in B$, we have
$\sum_{a\in B}p_a^{\,\beta}\le\bigl(\sum_{a\in B}p_a\bigr)^{\beta}$, so
\[
  \PR(X\in B)\ \ge\ \Bigl(R-\tau^{\beta-1}-K\tau^{-(h\alpha-\beta)_+}
   \Bigr)^{1/\beta} .
\]
\emph{Case 1: $\beta\le h\alpha$.} Here $(h\alpha-\beta)_+=h\alpha-\beta$; take
$\tau=C^{1/(h\alpha-1)}$: then $\tau^{\beta-1}=C^{(\beta-1)/(h\alpha-1)}$ and,
because $1-\frac{h\alpha-\beta}{h\alpha-1}=\frac{\beta-1}{h\alpha-1}$ and
$K=O(C)$, also
$K\tau^{-(h\alpha-\beta)}=O\bigl(C^{(\beta-1)/(h\alpha-1)}\bigr)$.

\emph{Case 2: $\beta>h\alpha$.} Here $(h\alpha-\beta)_+=0$; take instead
$\tau=C^{1/(\beta-1)}$, and then $\tau^{\beta-1}$ and $K$ are both $O(C)$.

In either case the two errors are
$O\bigl(C^{\min\{1,(\beta-1)/(h\alpha-1)\}}\bigr)$. As $x\mapsto x^{1/\beta}$ is
Lipschitz on $[R/2,1]$, for all small $C$ this gives
$\PR(X\in B)\ge R^{1/\beta}-O\bigl(C^{\min\{1,(\beta-1)/(h\alpha-1)\}}\bigr)$,
and $R^{1/\beta}=e^{-(\beta-1)D/\beta}$.
\end{proof}

\begin{remark}
The exponent $h$ in $w_u\ge m_u^h$ is the only place where $h$ enters
Theorems~\ref{thm:rawmoment} and~\ref{thm:general}, and always through $h\alpha$:
subadditivity of $t\mapsto t^{h\alpha}$ in part (1) needs $h\alpha\le1$, and
\eqref{eq:mubound} needs $h\alpha>1$. The budget order enters only through
Lemma~\ref{lem:tail}, whose three cases are parts 1--3 of
Theorem~\ref{thm:phase}. The fourth regime lies outside the reach of
Theorem~\ref{thm:general} altogether, because there the budget fails to control
$P_\alpha$ rather than failing to control a tail.
\end{remark}

\begin{corollary}[Answer to the question of \cite{LGK}]\label{cor:lgk}
For $C,D\ge0$ put
\[
  \widehat f(C,D)=\min\Bigl\{1,\ \inf_{0<\tau<1}
  \Bigl(\frac{D}{\log(1/\tau)}+\frac{C}{\tau\log2}\Bigr)\Bigr\}\in[0,1].
\]
Then $\widehat f(C,D)\to0$ as $C\to0$, and every $X$ with $H(X)\le D$ and
$\Delta_{1,2}(X)\le C$ admits a Sidon set $B\subset A$ with
$\PR(X\in B)\ge1-\widehat f(C,D)$.
\end{corollary}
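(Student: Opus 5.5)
The plan is to specialise the proof of Theorem~\ref{thm:general}(3) to $h=2$, $\alpha=\beta=1$, $f=s$ and $g=1$, keeping the constants explicit rather than hiding them in an $O(\cdot)$. Let $X$ satisfy $H(X)\le D$ and $\Delta_{1,2}(X)\le C$. Since $H(X)<\infty$, Proposition~\ref{prop:basic} applies at order $1$. Lemma~\ref{lem:shannonmass} with $g=1$ then gives
\[
  \sum_{u\in I_1}w_u\le \frac{C}{\log 2}.
\]
If $\widehat f(C,D)=1$, the choice $B=\emptyset$ (or a singleton) already works. So it suffices to show, for each fixed $\tau\in(0,1)$, that
\[
  \delta_2(X)\le \frac{D}{\log(1/\tau)}+\frac{C}{\tau\log2},
\]
and then take the infimum over $\tau$. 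Strictly speaking we need a single Sidon set attaining $1-\widehat f$. We handle this by fixing $\tau$ so that the sum is within $\varepsilon$ of the infimum and letting $\varepsilon\to0$. Because $\delta_2(X)$ is defined through a supremum, the bound $\delta_2(X)\le\widehat f$ gives Sidon sets with mass $\ge1-\widehat f-\varepsilon$. I would either phrase the conclusion through $\delta_2$, or note that $\inf_\tau$ of a continuous function in $\tau$ tending to $+\infty$ at both ends is attained when $C,D>0$. The degenerate cases $C=0$ (Proposition~\ref{prop:listzero}) and $D=0$ (a point mass) are handled directly.

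For fixed $\tau$, apply Lemma~\ref{lem:deletion} with $\mathcal A=A_\tau$. This produces a Sidon set $B\subset A_\tau$ with
\[
  \PR(X\notin B)\le \PR(X\notin A_\tau)+\sum_{u\in I_{1,A_\tau}}m_u .
\]
The tail term is at most $D/\log(1/\tau)$ by Lemma~\ref{lem:tail}(2). For the second term I use the exponent $h\alpha=2$. Every $u\in I_{1,A_\tau}$ has $m_u\ge\tau$, so $m_u=m_u^2/m_u\le w_u/\tau$, using $w_u\ge m_u^2$ from Lemma~\ref{lem:deletion}. This is \eqref{eq:mubound} with exponent $h\alpha-1=1$. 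Summing and applying Lemma~\ref{lem:shannonmass} bounds the second term by $C/(\tau\log2)$, which gives the displayed inequality.

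It remains to show $\widehat f(C,D)\to0$ as $C\to0$. Take $\tau=C\log^2(1/C)$, as in the proof of Theorem~\ref{thm:general}(3). Then $C/(\tau\log2)=1/(\log2\,\log^2(1/C))\to0$ and $\log(1/\tau)=\log(1/C)-2\log\log(1/C)\to\infty$. The main obstacle is only bookkeeping: confirming that $I_1$ is taken with respect to the sum map, so that the deletion yields a genuine Sidon set, and handling attainment as above. No new estimate is needed beyond the earlier lemmas.
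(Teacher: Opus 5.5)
Your proposal is correct and is essentially the paper's proof: the paper invokes Theorem~\ref{thm:general}(3) at $h=2$, $\alpha=\beta=1$, $g=1$, $f=s$, which you simply unroll via Lemma~\ref{lem:deletion}, Lemma~\ref{lem:tail}(2) and Lemma~\ref{lem:shannonmass}; it also gets $\widehat f\to0$ from \eqref{eq:critrate} and treats $C=0$ and $D=0$ separately. Of your two options for producing a single Sidon set, it is the second one that the statement requires and that the paper uses, namely attainment of the infimum of the continuous function $\tau\mapsto D/\log(1/\tau)+C/(\tau\log2)$ when $C,D>0$, since a bound on $\delta_2(X)$ alone would not give the existence claim as stated.
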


\begin{proof}
Theorem~\ref{thm:general}(3) with $h=2$, $\alpha=\beta=1$, $g=1$ and $f=s$
gives, for each
$\tau\in(0,1)$, a Sidon set $B_\tau$ with $\PR(X\in B_\tau)\ge1-\gamma(\tau)$,
where $\gamma(\tau)=D/\log(1/\tau)+C/(\tau\log2)$, and \eqref{eq:critrate} gives
$\widehat f(C,D)\to0$. If $C,D>0$ then $\gamma$ is continuous on $(0,1)$ and
tends to $\infty$ at both ends, so it attains its infimum at some $\tau_*$, and
$B=B_{\tau_*}$ works; if $\inf\gamma>1$ the claim is trivial. If $C=0$ then
$\Delta_{1,2}(X)=0$, so $A$ is Sidon by Proposition~\ref{prop:listzero} and
$B=A$ works; if $D=0$ then $A$ is a singleton.
\end{proof}

\section{Blocks from a \texorpdfstring{$B_{h-1}$}{B\_\{h-1\}} set}
\label{sec:blocks}

All the sharpness constructions come from one template. It is built from a
finite block $F$ that is $B_{h-1}$, so that no collision straddles two copies of
$F$, and is not $B_h[g]$, so that each copy forces a deletion. We first record
that such blocks exist for every $h$ and $g$, and that they can be taken with
vanishing $B_h[g]$-density.

\begin{lemma}[{thin $B_h[g]$ subsets of a thick $B_{h-1}$ set}]\label{lem:Fbound}
Let $h\ge2$, $g\ge1$ and let $F\subset\{1,\dots,M\}$ be finite. Then every
$B_h[g]$ subset $S\subseteq F$ satisfies
\begin{equation}\label{eq:Fbound}
  \binom{\lvert S\rvert+h-1}{h}\;\le\;g\,hM,
  \qquad\text{hence}\qquad
  \lvert S\rvert\;\le\;\bigl(g\,h\cdot h!\,M\bigr)^{1/h}.
\end{equation}
Consequently, if $F_1,F_2,\dots$ is a sequence of finite subsets of $\Z_{>0}$
with $F_i\subset\{1,\dots,M_i\}$ and $M_i=o\bigl(\lvert F_i\rvert^{h}\bigr)$,
then for every fixed $g$ the largest $B_h[g]$ subset of $F_i$ has size
$o(\lvert F_i\rvert)$.
\end{lemma}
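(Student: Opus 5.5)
This is a direct counting argument. Let $S\subseteq F$ be a $B_h[g]$ set with $\lvert S\rvert=k$. The number of multisets of size $h$ from $S$ is $\lvert\Uc_h(S)\rvert=\binom{k+h-1}{h}$. Every such multiset has its sum in $\{h,h+1,\dots,hM\}$, because every element of $S$ lies in $\{1,\dots,M\}$; this range has $hM-h+1\le hM$ integers. The $B_h[g]$ condition says each integer is the sum of at most $g$ multisets from $\Uc_h(S)$. Hence the sum map from $\Uc_h(S)$ to that range has all fibres of size at most $g$, and so $\binom{k+h-1}{h}\le g\,hM$, which is the first inequality in \eqref{eq:Fbound}.

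For the second inequality, note that
\[
  \binom{k+h-1}{h}=\frac{k(k+1)\cdots(k+h-1)}{h!}\ \ge\ \frac{k^h}{h!}.
\]
Therefore $k^h\le g\,h\cdot h!\,M$, and taking $h$-th roots gives $\lvert S\rvert\le(g\,h\cdot h!\,M)^{1/h}$.

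For the consequence, fix $g$ and apply this bound to $F_i$ with $M=M_i$. The largest $B_h[g]$ subset of $F_i$ then has size at most $(g\,h\cdot h!)^{1/h}M_i^{1/h}$. Dividing by $\lvert F_i\rvert$ gives
\[
  \frac{b_{h,g}(F_i)}{\lvert F_i\rvert}\ \le\ (g\,h\cdot h!)^{1/h}\Bigl(\frac{M_i}{\lvert F_i\rvert^{h}}\Bigr)^{1/h},
\]
and the right-hand side tends to $0$ because $M_i=o(\lvert F_i\rvert^h)$ and $g,h$ are fixed. So the largest $B_h[g]$ subset has size $o(\lvert F_i\rvert)$.

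None of the steps is a real obstacle. The only points to check are the two facts used above: the sums lie in an interval of at most $hM$ integers because $F\subset\Z_{>0}$ is bounded by $M$, and the binomial lower bound $k^h/h!$ needs no assumption on $k$.
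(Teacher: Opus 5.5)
Your proof is correct and matches the paper's own argument step for step. The sum map from $\Uc_h(S)$ into $\{h,\dots,hM\}$ is at most $g$-to-one. The bound $\binom{k+h-1}{h}\ge k^h/h!$ then gives the second inequality, and dividing $(g\,h\cdot h!\,M_i)^{1/h}$ by $\lvert F_i\rvert$ gives the consequence.
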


\begin{proof}
If $S$ is $B_h[g]$ then the map $s:\Uc_h(S)\to\{h,\dots,hM\}$ is at most
$g$-to-one, and $\lvert\Uc_h(S)\rvert=\binom{\lvert S\rvert+h-1}{h}$ while the
target has at most $hM$ elements. This is the first inequality, and the second
follows from $\binom{n+h-1}{h}\ge n^h/h!$. For the last statement,
$\lvert S\rvert/\lvert F_i\rvert\le(g\,h\cdot h!)^{1/h}
M_i^{1/h}/\lvert F_i\rvert\to0$.
\end{proof}

\begin{lemma}[Bose--Chowla]\label{lem:BC}
For every $h\ge2$ there is a sequence of finite sets
$F^{(h)}_1,F^{(h)}_2,\dots\subset\Z_{>0}$ such that each $F^{(h)}_i$ is a
$B_{h-1}$ set, $\lvert F^{(h)}_i\rvert\to\infty$, and
$F^{(h)}_i\subset\{1,\dots,M_i\}$ with
$M_i\le\lvert F^{(h)}_i\rvert^{\,h-1}$. Consequently, by
Lemma~\ref{lem:Fbound}, for every fixed $g\ge1$
\[
  \frac{b_{h,g}\bigl(F^{(h)}_i\bigr)}{\lvert F^{(h)}_i\rvert}
  \;=\;O_{h,g}\bigl(\lvert F^{(h)}_i\rvert^{-1/h}\bigr)
  \;\longrightarrow\;0 ,
\]
which is \eqref{eq:hierarchy}. In particular, for every $h\ge2$ and $g\ge1$
there exist finite $F\subset\Z_{>0}$ that are $B_{h-1}$ but not $B_h[g]$.
\end{lemma}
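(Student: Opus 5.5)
The plan is to use the classical Bose--Chowla construction for $B_k$ sets with $k=h-1$. For a prime power $q$ it produces a set of $q$ residues modulo $q^{k}-1$ whose $k$-fold sums are distinct modulo $q^k-1$. Concretely, let $\theta$ generate the multiplicative group of $\mathbb{F}_{q^k}$. Take $F=\{a\in[1,q^k-1]:\ \theta^a-\theta\in\mathbb{F}_q\}$, so that $F$ has exactly $q$ elements.

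\emph{Step 1: the $B_k$ property.} Suppose $a_1+\dots+a_k\equiv b_1+\dots+b_k\pmod{q^k-1}$ with $a_i,b_i\in F$. Write $\theta^{a_i}=\theta+c_i$ and $\theta^{b_i}=\theta+d_i$ with $c_i,d_i\in\mathbb{F}_q$. Then $\prod_i(\theta+c_i)=\prod_i(\theta+d_i)$. Both sides are monic polynomials in $\theta$ of degree $k$ over $\mathbb{F}_q$. Since $\theta$ has degree $k$ over $\mathbb{F}_q$, their difference has degree $<k$ and vanishes at $\theta$, so it is the zero polynomial. Unique factorization in $\mathbb{F}_q[x]$ then gives $\{c_i\}=\{d_i\}$ as multisets, hence $\{a_i\}=\{b_i\}$. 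Congruence modulo $q^k-1$ is implied by equality in $\Z$, so $F$, viewed as a subset of $\{1,\dots,q^k-1\}\subset\Z_{>0}$, is a $B_k$ set in $\Z$. The only delicate point is that integer equality of sums passes to the congruence, and it does, in the right direction.

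\emph{Step 2: the edge case and the size bound.} For $h=2$ we have $k=1$, and the $B_1$ condition is vacuous, so any set works. For instance take $F=\{1,\dots,n\}$, which gives $M=n=|F|^{h-1}$; the Bose--Chowla argument also degenerates correctly here. For $h\ge3$ let $q$ run through the primes and set $M_i=q^{h-1}-1\le |F|^{h-1}$; since $|F|=q$, we get $|F|\to\infty$.

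\emph{Step 3: the consequences.} By Lemma~\ref{lem:Fbound}, every $B_h[g]$ subset has size at most $(g h\cdot h!\,M_i)^{1/h}\le (gh\cdot h!)^{1/h}|F|^{(h-1)/h}$. Dividing by $|F|$ gives the $O_{h,g}(|F|^{-1/h})$ ratio. For large $i$ this ratio is below $1$, so $F$ itself is not $B_h[g]$, which gives the final assertion.

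The main (mild) obstacle is Step 1's polynomial-identity argument. Everything else is bookkeeping.
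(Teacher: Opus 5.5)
Your proposal is correct and follows essentially the same route as the paper: the interval $\{1,\dots,n\}$ for $h=2$, and for $h\ge3$ the Bose--Chowla $B_{h-1}$ set modulo $q^{h-1}-1$, lifted to representatives in $\{1,\dots,q^{h-1}-1\}$, followed by Lemma~\ref{lem:Fbound}. The only differences are that you prove the Bose--Chowla $B_k$ property yourself via the monic-polynomial and unique-factorization argument, whereas the paper cites it, and that you restrict $q$ to primes rather than prime powers, which is harmless.
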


\begin{proof}
For $h=2$ the condition ``$B_1$'' is vacuous and we may take
$F^{(2)}_i=\{1,\dots,i\}$, so that $M_i=i=\lvert F^{(2)}_i\rvert$. For $h\ge3$
put $r=h-1\ge2$ and let $q$ run over the prime powers. The Bose--Chowla
construction \cite{BoseChowla} produces a $B_r$ set of size $q$ in
$\Z/(q^{r}-1)\Z$. Choosing representatives in $\{1,\dots,q^{r}-1\}$ gives a
$B_r$ set in $\Z$: any equality of integer $r$-fold sums implies the
corresponding congruence modulo $q^{r}-1$, and the multisets therefore agree.
See \cite{Cilleruelo} for bounds on finite $B_h[g]$ sequences and
\cite{OBryantThick} for thick $B_h$ constructions. Taking $F^{(h)}_q$ to be that
set gives $\lvert F^{(h)}_q\rvert=q$ and $M\le q^{r}-1<q^{h-1}$, as required.
Since $h-1<h$ we have $M=o(\lvert F\rvert^{h})$, so Lemma~\ref{lem:Fbound}
applies and gives the displayed rate. The last assertion follows because
$b_{h,g}(F)/\lvert F\rvert<1$ for large $i$.
\end{proof}

\begin{remark}\label{rem:Zversion}
For $h=2$ Lemma~\ref{lem:BC} uses only $F=\{1,\dots,i\}$, and
Lemma~\ref{lem:Fbound} reduces to the classical bound
$\binom{b_{2,g}(F)+1}{2}\le2gi$, i.e.\ $b_{2,g}(F)=O(\sqrt{gi})$. For $h\ge3$
the input is the Bose--Chowla $B_{h-1}$ set, whose thickness
$M<\lvert F\rvert^{h-1}$ is what makes \eqref{eq:Fbound} nontrivial. A
finite-field analogue can be obtained from the moment curve
$\{(t,\dots,t^{h-1}):t\in\mathbb F_p\}$, which is $B_{h-1}$ by Newton's
identities when $p>h-1$, but we do not use it here.
\end{remark}

\subsection{The block template and its invariants}

\begin{example}[separated blocks]\label{ex:blocks}
Let $h\ge2$ and $g\ge1$, let $F\subset\{1,\dots,M\}$ be a finite set that is
$B_{h-1}$ and not $B_h[g]$. Let $B>hM$ be an integer, let $N\ge1$, and put
\[
  A_{F,N}:=\bigl\{aB^{k}+B^{N+k}\;:\;a\in F,\ 0\le k<N\bigr\}\subset\Z,
  \qquad y:=B^{2N+1},
\]
so $\lvert A_{F,N}\rvert=\lvert F\rvert N$ and $y\notin A_{F,N}$. For $q\in(0,1)$ let
$Y=Y_{F,N}^{(q)}$ have $\PR(Y=y)=1-q$ and $\PR(Y=a)=r:=q/(\lvert F\rvert N)$ for every
$a\in A_{F,N}$. We call the $N$ sets $\{aB^k+B^{N+k}:a\in F\}$ the
\emph{blocks}, and $y$ the \emph{heavy atom}.
\end{example}

The invariants of the template are governed by two constants attached to $F$
alone. For $\alpha\ne1$ let $\Rc^{[g]}_{\alpha,h}(F)$ be as in \eqref{eq:Rg},
and for the Shannon order put
\begin{equation}\label{eq:LambdaF}
  \Lambda^{[g]}_h(F):=\inf_{\chi:\Uc_h(F)\to\{1,\dots,g\}}\
   \sum_{(z,c)}\ \sum_{\substack{u\in\Uc_h(F)\\ s(u)=z,\ \chi(u)=c}}
    \nu_u\log\frac{\nu_{z,c}}{\nu_u},
  \qquad \nu_{z,c}=\!\!\sum_{\substack{u\,:\,s(u)=z\\ \chi(u)=c}}\!\!\nu_u .
\end{equation}

\begin{lemma}\label{lem:blockconst}
For every finite $F$, every $h\ge2$, $g\ge1$ and $\alpha>0$ with $\alpha\ne1$,
\[
  \Rc^{[g]}_{\alpha,h}(F)\ \ge\ d_\alpha\sum_z\sum_{j>g}\nu_{z,j}^{\,\alpha}
  \qquad\text{and}\qquad
  \Lambda^{[g]}_h(F)\ \ge\ (\log2)\sum_z\sum_{j>g}\nu_{z,j},
\]
where $\nu_{z,1}\ge\nu_{z,2}\ge\cdots$ are the multiplicities $\nu_u$ of the
$u\in\Uc_h(F)$ with $s(u)=z$. In particular both quantities are finite and
nonnegative, and both are strictly positive if and only if $F$ is not a
$B_h[g]$ set.
\end{lemma}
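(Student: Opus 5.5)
Both inequalities follow by applying the coarsening estimates already proved to the unnormalized multiplicities $\nu_u$ in place of probabilities. Take $W=\Uc_h(F)$ with weights $w_u=\nu_u$ and $\pi=s$. This family is finite and positive, so the summability and finiteness hypotheses of Theorem~\ref{thm:coarse} hold trivially. Homogeneity is harmless: Steps 1--4 of that proof never used $\sum_uw_u=1$, only positivity and summability. Hence for every labelling $\chi:\Uc_h(F)\to\{1,\dots,g\}$,
\[
  \Bigl\lvert\sum_u\nu_u^\alpha-\sum_{(z,c)}\nu_{z,c}^{\,\alpha}\Bigr\rvert
  \ \ge\ d_\alpha\sum_z\sum_{j>g}\nu_{z,j}^{\,\alpha}.
\]
The right side does not depend on $\chi$, so taking the infimum over $\chi$ gives the first inequality, with \eqref{eq:Rg} read on $A=F$.

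For the Shannon quantity we redo the argument of Lemma~\ref{lem:shannonmass} directly. Fix $\chi$. Each summand $\nu_u\log(\nu_{z,c}/\nu_u)$ is nonnegative, since $\nu_{z,c}\ge\nu_u$. If $u$ is not a heaviest element of its class $(z,c)$, with ties broken by a fixed order, then $\nu_{z,c}\ge2\nu_u$, so that summand is at least $(\log2)\nu_u$. In each fibre $s^{-1}(z)$ the class maxima are at most $g$ distinct multisets. Their total multiplicity is therefore at most $\sum_{j\le g}\nu_{z,j}$, so the non-maxima have total multiplicity at least $\sum_{j>g}\nu_{z,j}$. Summing over $z$ and then taking the infimum over $\chi$ gives the second inequality.

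For the remaining assertions, finiteness is immediate because $\Uc_h(F)$ and the set of labellings are finite. The infima are minima, and each term is a finite real number; in the Shannon case every term is nonnegative. Nonnegativity of $\Rc^{[g]}_{\alpha,h}(F)$ holds by definition as an absolute value.

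\emph{Positivity if $F$ is not $B_h[g]$.} Some fibre then has at least $g+1$ elements, so $\nu_{z,g+1}\ge1$. The right-hand sides above are then at least $d_\alpha>0$ and $\log2>0$ respectively, using $\alpha\ne1$ for $d_\alpha>0$.

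\emph{Vanishing if $F$ is $B_h[g]$.} Every fibre has at most $g$ elements, so we choose $\chi$ injective on each fibre. Every class $(z,c)$ is then a singleton, giving $\nu_{z,c}=\nu_u$. The moment difference is then $0$ and every logarithm vanishes, so both infima equal $0$.

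The only point needing care is checking that Theorem~\ref{thm:coarse} is applied with the right class sums: the class weights must be the $\nu_{z,c}$ of \eqref{eq:LambdaF}, which holds because $\sum_{u:s(u)=z,\chi(u)=c}\nu_u=\nu_{z,c}$ by definition. Beyond that the proof is routine.
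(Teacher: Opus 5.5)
Your proof is correct and follows essentially the paper's route: you apply Theorem~\ref{thm:coarse} with $\pi=s$ and the argument of Lemma~\ref{lem:shannonmass} to the weights $\nu_u$, take the infimum over $\chi$ because the right-hand sides do not depend on the labelling, and settle the vanishing case with a $\chi$ injective on each fibre. The paper first normalizes the $\nu_u$ to a probability vector, whereas you use directly that Theorem~\ref{thm:coarse} is stated for arbitrary summable positive weights and that the Shannon bound is homogeneous; this difference is only cosmetic, and your explicit tie-breaking in the Shannon step is slightly more careful than the paper's wording.
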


\begin{proof}
Normalize the weights $\nu_u$ to a probability vector on $\Uc_h(F)$ and apply
Theorem~\ref{thm:coarse} with $\pi=s$, respectively the argument of
Lemma~\ref{lem:shannonmass}; both right-hand sides are independent of $\chi$, so
the infima may be taken. The sums $\sum_{j>g}$ are nonempty for some $z$ exactly
when some fibre of $s$ on $\Uc_h(F)$ has more than $g$ elements, that is exactly
when $F$ is not $B_h[g]$. Conversely, if $F$ is $B_h[g]$ then a $\chi$ that is
injective on each fibre makes both expressions vanish.
\end{proof}

\begin{lemma}[exact invariants of the template]\label{lem:blocks}
For $Y=Y_{F,N}^{(q)}$ as in Example~\ref{ex:blocks}:
\begin{enumerate}
\item \emph{Fibre structure.} Two distinct multisets in $\Uc_h$ have equal sums if and only if, for a
single $k$, both consist of $h$ points of block $k$ whose $F$-parameter multisets
have equal sums; in particular no such pair involves $y$. The nonsingleton fibres
of $s$ on $\Uc_h$ are therefore $N$ disjoint copies, one per block, of the
nonsingleton fibres of $s$ on $\Uc_h(F)$. Here the multiset of block $k$ with
parameter multiset $T$ carries weight $\nu_T\,r^{h}$;
\item \emph{Deletion cost.} A subset of $A_{F,N}\cup\{y\}$ is $B_h[g]$ if and only if, for each
$0\le k<N$, the set of parameters it uses in block $k$ is a $B_h[g]$ subset of
$F$; hence
\[
  \delta_{h,g}(Y)=q\Bigl(1-\frac{b_{h,g}(F)}{\lvert F\rvert}\Bigr)\;>\;0 ;
\]
\item \emph{Entropy budget.} $H(Y)=\eta(q)+q\log(\lvert F\rvert N)$ and
$P_\alpha(Y)=(1-q)^\alpha+(\lvert F\rvert N)^{1-\alpha}q^\alpha$, both exactly;
\item \emph{R\'enyi deficit.} For $\alpha\ne1$, exactly
$\ \Lc_{\alpha,g}(s)=N\,r^{h\alpha}\,\Rc^{[g]}_{\alpha,h}(F)$, and hence
\[
  \Delta^{[g]}_{\alpha,h}(Y)=\frac{1}{1-\alpha}
   \log\frac{M_\alpha(U_h)}{M_\alpha(U_h)-\varsigma N r^{h\alpha}
    \Rc^{[g]}_{\alpha,h}(F)},\qquad
   \varsigma=\operatorname{sign}(1-\alpha);
\]
\item \emph{Shannon deficit.} $\Delta^{[g]}_{1,h}(Y)=N\Lambda^{[g]}_h(F)\,r^{h}
  =\Lambda^{[g]}_h(F)\,q^{h}/(\lvert F\rvert^{h}N^{h-1})$ exactly.
\end{enumerate}
\end{lemma}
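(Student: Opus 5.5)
The proof starts from the fibre structure in part (1); everything else then reduces to bookkeeping. Write each point of $A_{F,N}$ as $x=a B^k+B^{N+k}$, and a multiset $u\in\Uc_h(A_{F,N}\cup\{y\})$ as $j$ copies of $y$ together with $h-j$ points from the blocks. Let $c_k$ be the number of points of $u$ in block $k$ and $T_k$ the multiset of their $F$-parameters. Then
\[
 s(u)=jB^{2N+1}+\sum_k c_kB^{N+k}+\sum_k\sigma(T_k)B^k ,
\]
where $\sigma(T_k)\le c_kM\le hM<B$ and $c_k\le h<B$. Since $B>hM$, this is a base-$B$ expansion with no carries. Hence $s(u)$ determines $j$, every $c_k$, and every $\sigma(T_k)$. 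Two multisets with the same sum therefore have the same $j$, the same block counts $c_k$, and equal parameter sums in every block.

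If in some block $c_k<h$, then $F$ being $B_{h-1}$ means $\sigma(T_k)$ determines $T_k$. I will use the convention that equal sums of fewer than $h-1$ elements also determine the multiset; this holds because we may pad both multisets with the same element. So a collision between distinct multisets forces some block with $c_k=h$. That block then takes all $h$ points, so $j=0$, every other block is empty, and the two multisets lie in one block with equal-sum parameter multisets. The weight claim $w_u=\nu_T r^h$ follows from \eqref{eq:wdef}.

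For part (2), I read admissibility fibre by fibre. By part (1), every fibre meeting $\Uc_h(B)$ is a singleton or a within-block fibre, and the latter meets $\Uc_h(B)$ exactly in the $h$-multisets of parameters used in block $k$. Hence $B$ is $B_h[g]$ if and only if each block's parameter set is $B_h[g]$ in $F$; the heavy atom $y$ can always be kept. Maximizing $\PR(Y\in B)$ block by block gives $1-q+qb_{h,g}(F)/\lvert F\rvert$, which is the stated deletion cost. This cost is positive since $F$ is not $B_h[g]$.

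Part (3) is direct substitution into $H$ and $P_\alpha$.

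For part (4), singleton fibres contribute $0$ to the bracket in \eqref{eq:Lg}, whatever $\chi$ does on them. A labelling $\chi$ therefore splits independently into labellings $\chi_k$ on the $N$ block copies of the fibres of $\Uc_h(F)$. Each block contributes $r^{h\alpha}\bigl|\sum_T\nu_T^\alpha-\sum_{(z,c)}\nu_{z,c}^\alpha\bigr|$. I must take care that the absolute value sits outside the sum over blocks. Every fibrewise difference has sign $\varsigma$, as in Step 3 of Theorem~\ref{thm:coarse}, so the absolute values add. The infimum over $\chi$ then factors into $N$ identical infima, giving $N r^{h\alpha}\Rc^{[g]}_{\alpha,h}(F)$.

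The deficit formula comes from \eqref{eq:LDelta}, $\Lc=M_\alpha(U_h)\lvert1-e^{-(1-\alpha)\Delta}\rvert$. Solving for $\Delta$ with the sign $\varsigma$ gives the displayed logarithm.

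Part (5) is the same argument with the identity \eqref{eq:shannonid}. The conditional entropy is $\sum_u w_u\log(\sigma(u)/w_u)$, and only within-block fibres contribute. In such a fibre $w_u=\nu_Tr^h$ and $\sigma(u)=\nu_{z,c}r^h$, so the factor $r^h$ cancels inside the logarithm. Summing over the $N$ blocks and taking the infimum, which again factors, gives $N r^h\Lambda^{[g]}_h(F)$. Finally substitute $r=q/(\lvert F\rvert N)$.

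The main obstacle is part (1): the carry-free base-$B$ argument, together with the use of $B_{h-1}$ for blocks with fewer than $h$ points, including the padding convention for smaller sizes. Everything else is bookkeeping, apart from the sign bookkeeping for the absolute value in part (4).
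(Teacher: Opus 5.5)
Your proposal is correct and follows the paper's proof essentially step for step: the carry-free base-$B$ digit reading with padding to size $h-1$ for part (1), blockwise maximization for (2), and the blockwise decomposition of the infimum in \eqref{eq:Lg} and \eqref{eq:shannonid} for (4) and (5), with \eqref{eq:LDelta} solved for the deficit. Your explicit observation that all fibrewise differences share the sign $\varsigma$, so that the absolute value in \eqref{eq:Lg} splits as a sum over blocks, makes precise a step the paper leaves implicit.
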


\begin{proof}
(1) Let $u\in\Uc_h$ contain $n_y$ copies of $y$ and, for each $k$, a multiset
$T_k$ of $n_k=\lvert T_k\rvert$ parameters from $F$, so $n_y+\sum_kn_k=h$. Each
element of $F$ is at most $M$ and at most $h$ summands occur, so the coefficient
appearing in base-$B$ position $k$ is at most $hM<B$, and those in positions
$N+k$ and $2N+1$ are at most $h<B$. Hence no carry occurs, and the digits of
$s(u)$ in positions $k$, $N+k$, $2N+1$ return $\sum_{a\in T_k}a$, $n_k$ and
$n_y$ respectively. Two multisets in one fibre of $s$ therefore agree in $n_y$
and in every pair $(n_k,\sum_{a\in T_k}a)$. If they are distinct, then
$T_k\neq T_k'$ for some $k$, with $\lvert T_k\rvert=\lvert T_k'\rvert=n_k$ and
equal sums. If $n_k\le h-1$, pad both by $h-1-n_k$ copies of a fixed element of
$F$; the padded multisets have size $h-1$, lie in $F$, and have equal sums, so
$F$ being $B_{h-1}$ forces $T_k=T_k'$, a contradiction. Hence $n_k=h$, so all
$h$ elements lie in block $k$ and $n_y=0$. The converse is clear. For the last
sentence, a multiset with all $h$ elements in block $k$ and parameter multiset
$T$ has weight $\nu_T r^h$ by \eqref{eq:wdef}.

(2) Immediate from (1): a set is $B_h[g]$ exactly when each block contributes a
$B_h[g]$ subset of $F$, and $y$ is unconstrained. Every point of $A_{F,N}$ has
probability $r$, so the largest $B_h[g]$ mass is $(1-q)+Nb_{h,g}(F)r$, and
$b_{h,g}(F)<\lvert F\rvert$ because $F$ is not $B_h[g]$.

(3) The law is $1-q$ on one atom and $r=q/(\lvert F\rvert N)$ on $\lvert F\rvert N$ atoms.

(4) By (1) the fibre structure of $s$ on $\Uc_h$ is $N$ disjoint copies of that
of $s$ on $\Uc_h(F)$, with all weights scaled by $r^h$, together with singleton
fibres, which contribute nothing to either side. Since the defect in
\eqref{eq:Lg} is a sum over fibres and $\chi$ may be chosen on each fibre
independently, the infimum decomposes into a sum over the $N$ blocks; by
homogeneity of degree $\alpha$ in the weights, each block contributes
$r^{h\alpha}\Rc^{[g]}_{\alpha,h}(F)$. The displayed formula for
$\Delta^{[g]}_{\alpha,h}$ is then \eqref{eq:LDelta}, solved for
$\Delta^{[g]}$; the sign $\varsigma$ records that
$M_\alpha((s,\chi)(U_h))$ lies below $M_\alpha(U_h)$ for $\alpha<1$ and above it
for $\alpha>1$.

(5) Identical, using \eqref{eq:shannonid} in place of \eqref{eq:Lg}: the
integrand vanishes off the fibres of (1), all atoms in a block have probability
$r$, and $\Lambda^{[g]}_h(F)$ is by definition the corresponding per-block
infimum for the weights $\nu_u$, which is homogeneous of degree $1$.
\end{proof}

\section{Sharpness: two templates across the diagram}\label{sec:lower}

Throughout this section the map of Sections~\ref{sec:coarse}
and~\ref{sec:upper} is specialized to the sum map $s$. The ambient group is $G=\Z$ except in Section~\ref{sec:dilution}, which also uses
general abelian groups, since $h$-torsion is what yields the exact dilution
constant. Parts 1--3 of Theorem~\ref{thm:phase} are all realized by the
separated-block family $Y_{F,N}^{(q)}$ of Example~\ref{ex:blocks}, with a
different parameter sent to its limit in each; part 3 does not require
$\alpha\ge\beta$, so that family reaches beyond the range \eqref{eq:domain}. The
dilution regime needs a construction of the opposite kind. The five limiting
procedures are summarized below:
\begin{center}
\setlength{\tabcolsep}{4pt}
\begin{tabular}{@{}lllll@{}}
\hline
regime & tuning & budget forces & deficit & deletion\\
\hline
$h\alpha\le1$ & $N=1$, $q\downarrow0$ & none
  & $C\asymp q^{h\alpha}$ & $\delta\asymp q$\\
$h\alpha>1$, $\beta<1$ & $N\to\infty$ & $q\asymp N^{-(1-\beta)/\beta}$
  & $C\asymp N^{-(h\alpha-\beta)/\beta}$ & $\delta\asymp q$\\
$\beta=1$ & $N\to\infty$, then $F$ & $q\sim D/\log N$
  & $\log\tfrac1C\sim(h\alpha-1)\log N$ & $\delta\sim q$\\
$\beta>1$ & $N\to\infty$ & $q\to q_*>0$
  & $C\to0$ & $\delta\to q_*$\\
$\alpha<1$, $\beta>\alpha$ & dust, $N\to\infty$ & $q\to0$
  & $C\to0$ & $\delta\to1-m_\beta(D)$\\
\hline
\end{tabular}
\end{center}
In the critical row the two limits are iterated, not simultaneous: first
$N\to\infty$ with $F$ fixed, and only then $\lvert F\rvert\to\infty$. Whenever the
separated-block family is used, $F$ is a fixed finite set that is $B_{h-1}$ and
not $B_h[g]$, as supplied by Lemma~\ref{lem:BC}, and we put
\begin{equation}\label{eq:gamma}
  \gamma=\gamma_{h,g}(F):=1-\frac{b_{h,g}(F)}{\lvert F\rvert}\in(0,1] ,
\end{equation}
so that $\delta_{h,g}(Y_{F,N}^{(q)})=\gamma q$ by Lemma~\ref{lem:blocks}(2).

\subsection{The branch \texorpdfstring{$h\alpha\le1$}{h alpha <= 1}}\label{sec:subcrit}

\begin{proposition}\label{prop:lower1}
Let $h\ge2$, $g\ge1$, $h\alpha\le1$ and $D>0$. Take $N=1$ in
Example~\ref{ex:blocks} and write $Y^{(\varepsilon)}=Y_{F,1}^{(\varepsilon)}$.
Then $H_\beta(Y^{(\varepsilon)})\to0$ as $\varepsilon\downarrow0$ for every
$\beta>0$, $\delta_{h,g}(Y^{(\varepsilon)})=\gamma\varepsilon$, and
\[
  \Delta^{[g]}_{\alpha,h}\bigl(Y^{(\varepsilon)}\bigr)
  \ \sim\ \frac{\Rc^{[g]}_{\alpha,h}(F)}{(1-\alpha)\lvert F\rvert^{h\alpha}}\,
   \varepsilon^{h\alpha}\qquad(\varepsilon\downarrow0).
\]
Consequently $\Phi^{\Z,[g]}_{\alpha,\beta,D,h}(C)\gtrsim C^{1/(h\alpha)}$ for
all small $C>0$.
\end{proposition}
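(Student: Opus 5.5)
With $N=1$ the template $Y^{(\varepsilon)}$ has one heavy atom of mass $1-\varepsilon$ and $\lvert F\rvert$ atoms of mass $r=\varepsilon/\lvert F\rvert$. All claims then come from the exact invariants of Lemma~\ref{lem:blocks}, followed by an asymptotic expansion in $\varepsilon$.

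\textbf{Budget.} By Lemma~\ref{lem:blocks}(3), $P_\beta(Y^{(\varepsilon)})=(1-\varepsilon)^\beta+\lvert F\rvert^{1-\beta}\varepsilon^\beta\to1$ for every $\beta\ne1$, so $H_\beta=\log P_\beta/(1-\beta)\to0$. At $\beta=1$ we have $H=\eta(\varepsilon)+\varepsilon\log\lvert F\rvert\to0$.

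\textbf{Deletion.} Lemma~\ref{lem:blocks}(2) with $N=1$ gives exactly $\delta_{h,g}(Y^{(\varepsilon)})=\gamma\varepsilon$.

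\textbf{Deficit.} By Lemma~\ref{lem:blocks}(4),
\[
\Delta^{[g]}_{\alpha,h}=\frac{1}{1-\alpha}\log\frac{M}{M-\varsigma x},\qquad M=M_\alpha(U_h),\quad x=r^{h\alpha}\Rc^{[g]}_{\alpha,h}(F).
\]
Since $h\alpha\le1$ forces $\alpha<1$, we have $\varsigma=+1$. Here $x=\Rc^{[g]}_{\alpha,h}(F)\,\varepsilon^{h\alpha}/\lvert F\rvert^{h\alpha}$, and $x>0$ by Lemma~\ref{lem:blockconst} because $F$ is not $B_h[g]$.

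The one point needing care is that $M\to1$. The multiset $\{y,\dots,y\}$ has weight $(1-\varepsilon)^h$, so its $\alpha$-th power tends to $1$. Every other multiset contains a light atom, and there are only finitely many of them (the count depends on $\lvert F\rvert$ and $h$ only). Each such multiset has weight $O(\varepsilon)$, so its $\alpha$-th power is $O(\varepsilon^\alpha)\to0$. Hence $M=1+o(1)$ and $x\to0$, and
\[
\log\frac{M}{M-x}=-\log\Bigl(1-\frac{x}{M}\Bigr)\sim x .
\]
This gives the stated asymptotic
\[
\Delta^{[g]}_{\alpha,h}\sim\frac{\Rc^{[g]}_{\alpha,h}(F)}{(1-\alpha)\lvert F\rvert^{h\alpha}}\,\varepsilon^{h\alpha}.
\]
Only the leading term is needed, so the relative size of the $\varepsilon^\alpha$ correction in $M$ does not matter.

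\textbf{Consequence.} Write $\kappa=\Rc^{[g]}_{\alpha,h}(F)/\bigl((1-\alpha)\lvert F\rvert^{h\alpha}\bigr)$.
\begin{itemize}
\item The map $\varepsilon\mapsto\Delta^{[g]}_{\alpha,h}(Y^{(\varepsilon)})$ is continuous on $(0,\varepsilon_0)$, tends to $0$, and is asymptotic to $\kappa\varepsilon^{h\alpha}$.
\item For small $C$, set $\varepsilon=(C/(2\kappa))^{1/(h\alpha)}$. Then $\Delta^{[g]}_{\alpha,h}(Y^{(\varepsilon)})\le C$ for all small $C$.
\item Also $H_\beta(Y^{(\varepsilon)})\le D$ once $\varepsilon$ is small, by the budget step.
\item The finiteness hypotheses hold trivially, since the support is finite.
\end{itemize}
Hence $\Phi^{\Z,[g]}_{\alpha,\beta,D,h}(C)\ge\gamma\varepsilon\asymp C^{1/(h\alpha)}$, with the implied constant depending only on $F,\alpha,h$.
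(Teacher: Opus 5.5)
Your proposal is correct and follows the paper's proof essentially step for step: the budget, the deletion cost and the deficit all come from Lemma~\ref{lem:blocks}(2)--(4), together with $M_\alpha(U_h)\to1$ and $\Rc^{[g]}_{\alpha,h}(F)>0$. The only variation is the last step. You take $\varepsilon=(C/(2\kappa))^{1/(h\alpha)}$ and use the asymptotic directly to get $\Delta^{[g]}_{\alpha,h}(Y^{(\varepsilon)})\le C$, whereas the paper appeals to continuity of $\varepsilon\mapsto\Delta^{[g]}_{\alpha,h}(Y^{(\varepsilon)})$ to realize every small $C$; your version makes the continuity claim in your first bullet unnecessary.
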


\begin{proof}
Since $h\ge2$ we have $\alpha\le\frac12<1$. By Lemma~\ref{lem:blocks}(3),
$P_\beta=(1-\varepsilon)^\beta+\lvert F\rvert^{1-\beta}\varepsilon^\beta\to1$ for
$\beta\ne1$, so $H_\beta=\log P_\beta/(1-\beta)\to0$, while
$H=\eta(\varepsilon)+\varepsilon\log \lvert F\rvert\to0$; in particular
$H_\beta(Y^{(\varepsilon)})\le D$ for all small $\varepsilon$, and
$\delta_{h,g}=\gamma\varepsilon$ by Lemma~\ref{lem:blocks}(2). The support is
finite, and $w_{\{y^h\}}=(1-\varepsilon)^h\to1$ while every other multiset has
weight $O(\varepsilon)$, so $M_\alpha(U_h)\to1$. Now
Lemma~\ref{lem:blocks}(4) with $N=1$, $r=\varepsilon/\lvert F\rvert$ gives
\[
  \Delta^{[g]}_{\alpha,h}\bigl(Y^{(\varepsilon)}\bigr)
  =\frac{1}{1-\alpha}\log\frac{M_\alpha(U_h)}
   {M_\alpha(U_h)-(\varepsilon/\lvert F\rvert)^{h\alpha}\Rc^{[g]}_{\alpha,h}(F)}
  \sim\frac{\Rc^{[g]}_{\alpha,h}(F)}{(1-\alpha)\lvert F\rvert^{h\alpha}}
     \,\varepsilon^{h\alpha},
\]
using $\log(1+x)\sim x$ and $\Rc^{[g]}_{\alpha,h}(F)>0$ from
Lemma~\ref{lem:blockconst}. Eliminating $\varepsilon$ gives
$\delta_{h,g}\asymp C^{1/(h\alpha)}$ along this family, and the map
$\varepsilon\mapsto\Delta^{[g]}_{\alpha,h}(Y^{(\varepsilon)})$ is continuous on
$(0,1)$, being a minimum of finitely many continuous functions, and tends
to $0$, so every small value of $C$ is realized.
\end{proof}

\subsection{The branch \texorpdfstring{$h\alpha\ge1$}{h alpha >= 1} with a
subcritical budget}\label{sec:supcrit}

Here the light mass is spread over $N$ blocks and tuned so that the budget is
saturated. Put, for $0<\beta<1$,
\begin{equation}\label{eq:frakM}
  L=L_{\beta,D}:=e^{(1-\beta)D}-1,\qquad
  \mathfrak M_{\alpha,\beta,D,h}:=
  \begin{cases}
    \displaystyle\sum_{\ell=0}^{h}\frac{(h)_\ell^{\,\alpha}}{\ell!}L^{\ell},
      & \alpha=\beta,\\[4mm]
    1, & \beta<\alpha ,
  \end{cases}
\end{equation}
where $(h)_\ell=h!/(h-\ell)!$; note $\mathfrak M\in[1,\infty)$ always. This is
the limiting value of $M_\alpha(U_h)$ along the family below, and it is nontrivial
only on the diagonal $\alpha=\beta$.

\begin{proposition}\label{prop:lower2}
Fix $h\ge2$, $g\ge1$, $D>0$ and $0<\beta<1$ with $\beta\le\alpha$ and
$h\alpha>1$. For all large $N$ there is $q_N$ with
$H_\beta(Y_{F,N}^{(q_N)})=D$, and then
\[
  q_N\sim L^{1/\beta}(\lvert F\rvert N)^{-\frac{1-\beta}{\beta}},
  \qquad
  C_N:=\Delta^{[g]}_{\alpha,h}\bigl(Y_{F,N}^{(q_N)}\bigr)
  \sim c'\,N^{-\frac{h\alpha-\beta}{\beta}} ,
\]
where, for $\alpha\ne1$,
\begin{equation}\label{eq:cprime}
  c'=\frac{\Rc^{[g]}_{\alpha,h}(F)}
     {\lvert1-\alpha\rvert\,\mathfrak M_{\alpha,\beta,D,h}}
   \Bigl(\frac{L}{\lvert F\rvert}\Bigr)^{h\alpha/\beta}>0 ,
\end{equation}
and for $\alpha=1$ the same formula holds with
$\Rc^{[g]}_{\alpha,h}(F)/\lvert1-\alpha\rvert$ replaced by
$\Lambda^{[g]}_h(F)$. Consequently
$\delta_{h,g}(Y_{F,N}^{(q_N)})=\gamma q_N
\asymp C_N^{(1-\beta)/(h\alpha-\beta)}$, and
$\Phi^{\Z,[g]}_{\alpha,\beta,D,h}(C)\gtrsim C^{(1-\beta)/(h\alpha-\beta)}$ for
all small $C>0$.
\end{proposition}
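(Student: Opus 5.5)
The plan is to use the exact invariants of Lemma~\ref{lem:blocks} and do asymptotics as $N\to\infty$ with $F$ fixed. There are three steps: solve the budget equation for $q_N$, compute the limit of $M_\alpha(U_h)$, and insert both into the deficit formulas of Lemma~\ref{lem:blocks}(4),(5).

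\emph{Step 1: the budget.} Write $n=\lvert F\rvert N$. By Lemma~\ref{lem:blocks}(3),
\[
H_\beta(Y)=D \iff (1-q)^\beta+n^{1-\beta}q^\beta=e^{(1-\beta)D}.
\]
The left side is continuous in $q\in(0,1)$. It tends to $1<e^{(1-\beta)D}$ as $q\downarrow0$. It exceeds $e^{(1-\beta)D}$ at, say, $q=\tfrac12$ once $n$ is large, because $n^{1-\beta}\to\infty$. So a root $q_N$ exists, and any such root satisfies $n^{1-\beta}q_N^\beta\le e^{(1-\beta)D}$. Hence $q_N\to0$, so $(1-q_N)^\beta\to1$, and therefore $n^{1-\beta}q_N^\beta\to L$. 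This gives
\[
q_N\sim L^{1/\beta}n^{-(1-\beta)/\beta},
\]
and consequently
\[
r=\frac{q_N}{n}\sim L^{1/\beta}n^{-1/\beta},\qquad r^\beta n\to L .
\]

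\emph{Step 2: the limit of $M_\alpha(U_h)$.} Group the multisets in $\Uc_h$ by the number $h-\ell$ of copies of $y$ they contain, and by the multiset of $\ell$ light atoms. The ordered count of such a multiset $u$ is $\nu_u=\binom{h}{\ell}\nu'$, where $\nu'$ is the multinomial coefficient of the light part. The light parts with distinct elements dominate; there are about $n^\ell/\ell!$ of them, each with $\nu_u=(h)_\ell$. Their total contribution is therefore
\[
\frac{(h)_\ell^{\alpha}}{\ell!}\,n^\ell r^{\ell\alpha}(1-q)^{(h-\ell)\alpha}\,(1+o(1)).
\]
Light parts with repeated elements contribute only $O(n^{\ell-1}r^{\ell\alpha})$. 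Now $n^\ell r^{\ell\alpha}=(nr^\alpha)^\ell$ and $nr^\alpha\asymp n^{1-\alpha/\beta}$. If $\alpha=\beta$ this tends to $L$. If $\alpha>\beta$ it tends to $0$, so only the $\ell=0$ term survives. In both cases $M_\alpha(U_h)\to\mathfrak M$. The same bookkeeping shows that $M_\alpha(U_h)$ is finite, so the deficit is defined.

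\emph{Step 3: the deficit.} Take first $\alpha\ne1$. Lemma~\ref{lem:blocks}(4) gives
\[
C_N=\frac{1}{1-\alpha}\log\frac{M_\alpha}{M_\alpha-\varsigma x},\qquad x=Nr^{h\alpha}\Rc^{[g]}_{\alpha,h}(F).
\]
Here $x\to0$, because $Nr^{h\alpha}\asymp N\cdot N^{-h\alpha/\beta}$ and $h\alpha/\beta\ge h\alpha>1$. Expanding the logarithm gives $C_N\sim x/(\lvert1-\alpha\rvert\mathfrak M)$. Substituting $r^{h\alpha}\sim(L/\lvert F\rvert)^{h\alpha/\beta}N^{-h\alpha/\beta}$ produces exactly $c'N^{1-h\alpha/\beta}=c'N^{-(h\alpha-\beta)/\beta}$. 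Positivity of $c'$ comes from Lemma~\ref{lem:blockconst}, since $F$ is not $B_h[g]$.

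For $\alpha=1$, Lemma~\ref{lem:blocks}(5) gives $C_N=N\Lambda^{[g]}_h(F)r^h$ exactly. In this case $\beta<1=\alpha$, so $\mathfrak M=1$, and the same substitution gives the stated formula. Then $\delta_{h,g}=\gamma q_N\asymp N^{-(1-\beta)/\beta}$, and eliminating $N$ gives $\delta_{h,g}\asymp C_N^{(1-\beta)/(h\alpha-\beta)}$.

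\emph{Main obstacle: passing from the sequence $C_N$ to every small $C$.} The family only realizes the values $C_N$. Since $C_{N+1}/C_N\to1$, every small $C$ lies in some interval $[C_{N+1},C_N]$. Because $\Phi$ is nondecreasing in $C$,
\[
\Phi(C)\ge\delta(Y_{N+1})\asymp C_{N+1}^{\Theta}\asymp C^{\Theta},
\]
which gives $\Phi^{\Z,[g]}_{\alpha,\beta,D,h}(C)\gtrsim C^{(1-\beta)/(h\alpha-\beta)}$ for all small $C$. The other delicate point is the Step~2 count at $\alpha=\beta$: the constant $\mathfrak M$ enters $c'$, so the lower-order terms from repeated light atoms must be checked to vanish in the limit. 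The asymptotic order of $C_N$ does not depend on this.
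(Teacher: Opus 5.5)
Your proposal is correct and follows the paper's proof essentially step for step: saturate $P_\beta=e^{(1-\beta)D}$, show $M_\alpha(U_h)\to\mathfrak M_{\alpha,\beta,D,h}$ by sorting multisets by the number of light atoms, substitute into Lemma~\ref{lem:blocks}(4),(5), and bracket a general $C$ between consecutive values $C_N$ using $C_{N+1}/C_N\to1$. The differences are cosmetic. The paper handles $\alpha>1$ by the sandwich $(1-q_N)^{h\alpha}\le M_\alpha(U_h)\le1$ rather than by your light-atom count. Also, the bracketing interval exists simply because $C_N\to0$ (take the least $N\ge N_0$ with $C_N\le C$); the ratio limit is needed only to get $C_{N+1}\asymp C$, not for existence as your phrasing suggests.
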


\begin{proof}
\emph{Step 1: saturating the entropy budget.}
By Lemma~\ref{lem:blocks}(3) the map $q\mapsto P_\beta$ is continuous on
$[0,1]$, equals $1$ at $q=0$ and $(\lvert F\rvert N)^{1-\beta}$ at $q=1$, so for
$(\lvert F\rvert N)^{1-\beta}>e^{(1-\beta)D}>1$ there is $q_N$ with
$P_\beta=e^{(1-\beta)D}$. Then $q_N\to0$, $(1-q_N)^\beta\to1$ and
$(\lvert F\rvert N)^{1-\beta}q_N^\beta\to L$, giving the first asymptotic. With $r=q_N/(\lvert F\rvert N)$,
\begin{equation}\label{eq:diffasym}
  N r^{h\alpha}=N^{1-h\alpha}\lvert F\rvert^{-h\alpha}q_N^{h\alpha}
  \sim\Bigl(\frac{L}{\lvert F\rvert}\Bigr)^{h\alpha/\beta}
   N^{-\frac{h\alpha-\beta}{\beta}} ,
\end{equation}
because $q_N^{h\alpha}\sim L^{h\alpha/\beta}(\lvert F\rvert N)^{-h\alpha(1-\beta)/\beta}$ and
$h\alpha+h\alpha\frac{1-\beta}{\beta}=\frac{h\alpha}{\beta}$.

\emph{Step 2: the limiting power sum $M_\alpha(U_h)$.} If $\alpha>1$ then, since
$\sum_uw_u=1$ and $\alpha-1>0$,
\[
  (1-q_N)^{h\alpha}=w_{\{y^h\}}^{\,\alpha}\ \le\ M_\alpha(U_h)\ \le\
  \Bigl(\max_uw_u\Bigr)^{\alpha-1}\ \le\ 1 ,
\]
so $M_\alpha(U_h)\to1$. If $\alpha<1$ we split $\Uc_h$ according to the total
light multiplicity $\ell$ and the number $j\le\ell$ of \emph{distinct} light
atoms used. A multiset with $\ell$ light atoms all distinct has weight
$\frac{h!}{(h-\ell)!}(1-q_N)^{h-\ell}r^{\ell}=(h)_\ell(1+o(1))r^\ell$, and there
are $\binom{\lvert F\rvert N}{\ell}=(1+o(1))(\lvert F\rvert N)^\ell/\ell!$ of
them, so together they contribute
\[
  \frac{(h)_\ell^{\,\alpha}}{\ell!}(\lvert F\rvert N)^{\ell}r^{\ell\alpha}(1+o(1))
  =\frac{(h)_\ell^{\,\alpha}}{\ell!}
   \bigl[(\lvert F\rvert N)^{1-\alpha}q_N^{\alpha}\bigr]^{\ell}(1+o(1)),
\]
using $r=q_N/(\lvert F\rvert N)$; and $(\lvert F\rvert N)^{1-\alpha}q_N^{\alpha}
\sim L^{\alpha/\beta}(\lvert F\rvert N)^{1-\alpha/\beta}$, which tends to $L$ if $\alpha=\beta$ and to $0$ if $\alpha>\beta$. A multiset with
$j<\ell$ distinct light atoms has weight of the same order $r^\ell$ but only
$O(N^{j})$ choices, so its class contributes $O(N^{j-\ell})$ relative to the
$j=\ell$ class and is negligible. Summing over $\ell=0,\dots,h$ gives
$M_\alpha(U_h)\to\mathfrak M_{\alpha,\beta,D,h}$ in both cases. The case
$\alpha=1$ needs no such computation.

\emph{Step 3: the deficit and the deletion cost.}
Lemma~\ref{lem:blocks}(4) for $\alpha\ne1$, respectively
Lemma~\ref{lem:blocks}(5) for $\alpha=1$, together with \eqref{eq:diffasym} and
$\log(1+x)\sim x$, yields $C_N\sim c'N^{-(h\alpha-\beta)/\beta}$ with $c'$ as
stated. Eliminating $N$ between
$\gamma q_N\asymp N^{-(1-\beta)/\beta}$ and
$C_N\asymp N^{-(h\alpha-\beta)/\beta}$ gives
$\gamma q_N\asymp C_N^{(1-\beta)/(h\alpha-\beta)}$.

\emph{Step 4: from $C_N$ to arbitrary $C$.} No monotonicity of the sequence
$(C_N)$ is needed. Since $C_N\sim c'N^{-(h\alpha-\beta)/\beta}$ with
$h\alpha>\beta$ we have $C_N\to0$ and $C_{N+1}/C_N\to1$. Fix $N_0$ large and,
given a small $C>0$, let $N$ be the least index $\ge N_0$ with $C_N\le C$; this
exists because $C_N\to0$, and for $C<C_{N_0}$ we have $N>N_0$, so minimality
gives $C<C_{N-1}$. Hence
\[
  1\ \ge\ \frac{C_N}{C}\ \ge\ \frac{C_N}{C_{N-1}}\ \longrightarrow\ 1 ,
\]
so $C_N=(1+o(1))C$; since $\Phi^{\Z,[g]}_{\alpha,\beta,D,h}$ is nondecreasing
and $Y_{F,N}^{(q_N)}$ is admissible for $C_N$, the claim follows.
\end{proof}

\begin{proof}[Proof of Theorem~\ref{thm:phase}(1)]
The upper bounds are Theorem~\ref{thm:general}(1),(2) with $f=s$, $g=1$, since
$1/(h\alpha)$ and $\frac{1-\beta}{h\alpha-\beta}$ are $\Theta_h(\alpha,\beta)$
in the respective ranges. Indeed, both denominators being positive,
\begin{equation}\label{eq:thetacases}
  \frac{1-\beta}{h\alpha-\beta}\ \gtrless\ \frac1{h\alpha}
  \iff h\alpha(1-\beta)\gtrless h\alpha-\beta
  \iff \beta\gtrless h\alpha\beta
  \iff 1\gtrless h\alpha ,
\end{equation}
which is the case distinction in \eqref{eq:theta}. The lower bounds are
Propositions~\ref{prop:lower1} and~\ref{prop:lower2}, whose exponents
$1/(h\alpha)$ and $(1-\beta)/(h\alpha-\beta)$ therefore match the upper bounds in
the respective branches. All constructions are integer-valued, so the conclusion
holds for $\Phi^{\Z}$ as well; and, since the constructions and the proof of
Theorem~\ref{thm:general} are carried out for general $g$, it holds for
$\Phi^{[g]}$ and $\Phi^{\Z,[g]}$.
\end{proof}

\subsection{The critical budget: exact logarithmic constant}\label{sec:crit}

At $\beta=1$ the budget no longer controls a power of the tail mass, only its
logarithm, and the polynomial rate degenerates. The construction is the same, but
now the block $F$ must also be sent to infinity: the fraction $\gamma$ of each
block that has to be deleted must tend to $1$, and this is exactly what
\eqref{eq:hierarchy} provides.

\begin{proof}[Proof of Theorem~\ref{thm:exactconst} at $\beta=1$, and hence of
Theorem~\ref{thm:phase}(2)]
Let $\alpha\ge\beta=1$. The upper bound $\limsup\le(h\alpha-1)D$ is
\eqref{eq:critrate}.

\emph{Step 1: fix $F$ and choose $q_N$.} For the lower bound take $F=F^{(h)}_i$ from
Lemma~\ref{lem:BC}, with $\lvert F\rvert$ large enough that $b_{h,g}(F)<\lvert F\rvert$, so
that $F$ is not $B_h[g]$ and hence $\Rc^{[g]}_{\alpha,h}(F)>0$ and
$\Lambda^{[g]}_h(F)>0$ by Lemma~\ref{lem:blockconst}.

By Lemma~\ref{lem:blocks}(3) the map $q\mapsto\eta(q)+q\log(\lvert F\rvert N)$ is continuous,
vanishes at $q=0$ and equals $\log2+\frac12\log(\lvert F\rvert N)$ at $q=\frac12$, so for all
large $N$ there is $q_N\in(0,\frac12)$ with $H(Y_{F,N}^{(q_N)})=D$; from
$q_N\le D/\log(\lvert F\rvert N)$ we get $q_N\to0$, $\eta(q_N)=o(1)$ and
\begin{equation}\label{eq:qNbig}
  q_N=(1+o(1))\frac{D}{\log(\lvert F\rvert N)} .
\end{equation}
\emph{Step 2: compute $C_N$.}
Write $C_N=\Delta^{[g]}_{\alpha,h}(Y_{F,N}^{(q_N)})$ and $r=q_N/(\lvert F\rvert N)$. If
$\alpha=1$ then $C_N=\Lambda^{[g]}_h(F)q_N^{h}/(\lvert F\rvert^{h}N^{h-1})$ exactly, by
Lemma~\ref{lem:blocks}(5). If $\alpha>1$ then $M_\alpha(U_h)\to1$ by the
sandwich in the proof of Proposition~\ref{prop:lower2}, so
Lemma~\ref{lem:blocks}(4) gives
$C_N\sim\Rc^{[g]}_{\alpha,h}(F)Nr^{h\alpha}/(\alpha-1)$. In either case
$Nr^{h\alpha}=N^{1-h\alpha}(q_N/\lvert F\rvert)^{h\alpha}$ with $F$ fixed, so
\eqref{eq:qNbig} gives
\begin{equation}\label{eq:CNasym}
  C_N\ \sim\ K_F\,N^{-(h\alpha-1)}
   \bigl(\log(\lvert F\rvert N)\bigr)^{-h\alpha} ,
\end{equation}
with $K_F=\Lambda^{[g]}_h(F)(D/\lvert F\rvert)^{h}>0$ if $\alpha=1$ and
$K_F=\Rc^{[g]}_{\alpha,h}(F)(D/\lvert F\rvert)^{h\alpha}/(\alpha-1)>0$ if
$\alpha>1$. Taking logarithms, and using $\log(1/q_N)=O(\log\log N)$,
\begin{equation}\label{eq:CNbig}
  \log\frac{1}{C_N}=(h\alpha-1)\log N+O(\log\log N)
  =(1+o(1))(h\alpha-1)\log(\lvert F\rvert N) .
\end{equation}
By Lemma~\ref{lem:blocks}(2), $\delta_{h,g}(Y_{F,N}^{(q_N)})=\gamma q_N$, so
\eqref{eq:qNbig} and \eqref{eq:CNbig} give
\begin{equation}\label{eq:alongCN}
  \delta_{h,g}\bigl(Y_{F,N}^{(q_N)}\bigr)\log\frac{1}{C_N}
  \longrightarrow\Bigl(1-\frac{b_{h,g}(F)}{\lvert F\rvert}\Bigr)(h\alpha-1)D .
\end{equation}

\emph{Step 3: from $C_N$ to all $C$.} Again no monotonicity of $(C_N)$ is needed.
By \eqref{eq:CNasym}, $C_N\to0$ and $C_{N+1}/C_N\to1$; the ratio
needs the asymptotic \eqref{eq:CNasym} rather than its logarithmic consequence
\eqref{eq:CNbig}, which leaves the ratio free to oscillate. Fix $N_0$
large; given small $C>0$, let $N$ be the least index $\ge N_0$ with $C_N\le C$,
which exists since $C_N\to0$, and for $C<C_{N_0}$ satisfies $C<C_{N-1}$ by
minimality. Then $1\ge C_N/C\ge C_N/C_{N-1}\to1$, so
$\log(1/C)=(1+o(1))\log(1/C_N)$, and since
$\Phi^{\Z,[g]}_{\alpha,1,D,h}$ is nondecreasing,
\[
  \liminf_{C\downarrow0}\Phi^{\Z,[g]}_{\alpha,1,D,h}(C)\log\frac1C
  \;\ge\;\Bigl(1-\frac{b_{h,g}(F)}{\lvert F\rvert}\Bigr)(h\alpha-1)D
\]
by \eqref{eq:alongCN}.

\emph{Step 4: let $\lvert F\rvert\to\infty$.} Letting $F$ run through the sequence of
Lemma~\ref{lem:BC} and using $b_{h,g}(F)/\lvert F\rvert\to0$, valid for each fixed $g$, gives
$\liminf\ge(h\alpha-1)D$. Finally $\Phi^{\Z,[g]}\le\Phi^{[g]}$ and the upper
bound \eqref{eq:critrate} applies to $\Phi^{[g]}$, so the same limit holds for
$\Phi^{[g]}_{\alpha,1,D,h}$, and for $g=1$ this is
Theorem~\ref{thm:phase}(2).
\end{proof}

\subsection{The supercritical budget: an exact instability floor}\label{sec:super}

For $\beta>1$ one has $H_\beta\le H$, so a bound on $H_\beta$ does not bound the
Shannon entropy and cannot control the mass on light atoms. The same template
exhibits the resulting floor: it suffices to freeze $q$ and let $N\to\infty$.

\begin{proof}[Proof of Theorem~\ref{thm:phase}(3) and of
Theorem~\ref{thm:exactconst} for $\beta>1$]
Let $\alpha\ge1$ and $\beta>1$, put $q_*=1-e^{-(\beta-1)D/\beta}\in(0,1)$, and take
$Y_N=Y_{F,N}^{(q_*)}$ from Example~\ref{ex:blocks}. By
Lemma~\ref{lem:blocks}(3), $P_\beta(Y_N)>(1-q_*)^\beta=e^{-(\beta-1)D}$, and
since $1/(1-\beta)<0$,
\[
  H_\beta(Y_N)=\frac{\log P_\beta}{1-\beta}
  <\frac{\log e^{-(\beta-1)D}}{1-\beta}=D .
\]
By Lemma~\ref{lem:blocks}(2), $\delta_{h,g}(Y_N)=\gamma q_*$ for every $N$. If
$\alpha>1$ then, by Lemma~\ref{lem:blocks}(4), $\log(1+x)\le x$ and
$M_\alpha(U_h)\ge w_{\{y^h\}}^\alpha=(1-q_*)^{h\alpha}$,
\[
  \Delta^{[g]}_{\alpha,h}(Y_N)\le
  \frac{\Rc^{[g]}_{\alpha,h}(F)}{(\alpha-1)\lvert F\rvert^{h\alpha}}
  \Bigl(\frac{q_*}{1-q_*}\Bigr)^{h\alpha}N^{1-h\alpha}\longrightarrow0,
\]
since $h\alpha\ge h>1$. If instead $\alpha=1$, then Lemma~\ref{lem:blocks}(5)
gives the exact value
\[
  \Delta^{[g]}_{1,h}(Y_N)=\Lambda^{[g]}_h(F)\,\frac{q_*^{h}}{\lvert F\rvert^{h}N^{h-1}}
  \ \longrightarrow\ 0 ,
\]
because $h\ge2$. Thus
$\Phi^{[g]}_{\alpha,\beta,D,h}(C)\ge\gamma_{h,g}(F)q_*$ for every $C>0$; letting
$F$ run through the sequence of Lemma~\ref{lem:BC} and using
$b_{h,g}(F)/\lvert F\rvert\to0$ gives $\gamma_{h,g}(F)\to1$ and hence
$\Phi^{[g]}_{\alpha,\beta,D,h}(C)\ge q_*$. All the $Y_N$ are integer-valued, so
the same holds for $\Phi^{\Z,[g]}$. The matching upper bound is
Theorem~\ref{thm:general}(4), which gives
$\Phi^{[g]}_{\alpha,\beta,D,h}(C)\le q_*+O(C^{\min\{1,(\beta-1)/(h\alpha-1)\}})$;
together
these prove both statements, and for $g=1$ they are
Theorem~\ref{thm:phase}(3).
\end{proof}

By contrast, part 2 of Theorem~\ref{thm:phase} shows that lowering the budget
order to $1$ restores stability at every R\'enyi order $\alpha\ge1$. So on the
half-plane $\alpha\ge1$ it is $\beta=1$, and not $\alpha=1$, that bounds
stability; below $\alpha=1$ the relevant boundary is the diagonal itself, by
part 4.

\subsection{Dilution: the deficit off the range}\label{sec:dilution}

The constructions so far spread the light mass over blocks that all collide. The
remaining low-order regime $0<\alpha<1$, $\beta>\alpha$ is settled by a
construction of the opposite kind: keep \emph{one} weighted core that is badly
non-$B_h$, and add a large number of atoms in general position, called
\emph{dust}, placed so that they separate sums on their own; every collision that
survives therefore originates in the core, and Lemma~\ref{lem:dust} makes this
precise by tracking the multisets that use at least two core atoms. The dust does
not change the weight to be deleted, but for $\alpha<1$ it inflates
$M_\alpha(U_h)$, and the order-$\alpha$ deficit compares $M_\alpha(U_h)$ with
$M_\alpha(S_h)$; so the inflation dilutes the deficit to $0$.

\begin{example}[a dusted core]\label{ex:dust}
Let $G_0$ be an abelian group, let $A_0\subset G_0$ be finite and let $\rho$ be a
probability vector on $A_0$. Let $t_1<\dots<t_N$ be a $B_h$ set of positive
integers and let $B>h$. In $G=G_0\times\Z$ put
\[
  \mathcal C=\bigl\{(a,1):a\in A_0\bigr\},\qquad
  \mathcal T=\bigl\{(0,Bt_k):1\le k\le N\bigr\} ,
\]
and for $q\in(0,1)$ let $X=X^{(q)}_{\rho,N}$ have
$\PR\bigl(X=(a,1)\bigr)=(1-q)\rho_a$ for $a\in A_0$ and
$\PR\bigl(X=(0,Bt_k)\bigr)=q/N$ for each $k$. We call $\mathcal C$ the
\emph{core} and $\mathcal T$ the \emph{dust\nocorr}; the letter $D$ is reserved for the
budget.
\end{example}

\begin{lemma}\label{lem:dust}
Let $X=X^{(q)}_{\rho,N}$ be as in Example~\ref{ex:dust} and write
$\Psi:=N^{1-\alpha}q^{\alpha}$.
\begin{enumerate}
\item Two distinct multisets of $\Uc_h$ have equal sums if and only if they
contain the same number $j$ of core atoms, the same dust atoms, and their core
parts are distinct $j$-multisets from $A_0$ with equal sums in $G_0$. In
particular every fibre meeting a multiset with at most one core atom is a
singleton.
\item A subset $S\subseteq\mathcal C\cup\mathcal T$ is $B_h[g]$ if and only if
its core part is; consequently
\[
  \delta_{h,g}(X)=(1-q)\bigl(1-\mu_g(\rho)\bigr),\qquad
  \mu_g(\rho):=\max\bigl\{\rho(S_0):S_0\subseteq A_0\text{ is }B_h[g]\bigr\} .
\]
\item $P_\beta(X)=(1-q)^{\beta}\sum_a\rho_a^{\beta}+N^{1-\beta}q^{\beta}$ for
$\beta\ne1$, and $H(X)=(1-q)H(\rho)+\eta(q)+q\log N$.
\item For $0<\alpha<1$ there are constants $c,c'>0$ depending only on
$\rho,\alpha$ and $h$ such that, for all large $N$,
\[
  M_\alpha(U_h)\ \ge\ c\,\Psi^{h}
  \qquad\text{and}\qquad
  0\le M_\alpha(U_h)-M_\alpha(S_h)\ \le\ c'\sum_{j=2}^{h}\Psi^{\,h-j} ,
\]
so that, along any family with $\Psi\to\infty$,
$\Delta^{[g]}_{\alpha,h}(X)=O_{\rho,\alpha,h}\bigl(\Psi^{-2}\bigr)$; the last
assertion uses $\Psi\to\infty$ only to ensure
$M_\alpha(S_h)\ge\frac12c\,\Psi^{h}$.
\end{enumerate}
\end{lemma}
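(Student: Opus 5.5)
For part (1) we read sums coordinatewise in $G=G_0\times\Z$. A multiset $u$ with $j$ core atoms, core part $u_0$ and dust part given by a multiset $K$ of indices of size $h-j$ has sum $\bigl(s(u_0),\,j+B\sum_{k\in K}t_k\bigr)$. Since $j\le h<B$, the second coordinate modulo $B$ returns $j$. Subtracting $j$ and dividing by $B$ returns $\sum_{k\in K}t_k$. Padding $K$ by copies of a fixed $t_1$ up to size $h$, the $B_h$ property of $\{t_k\}$ recovers $K$ itself. Hence two multisets with equal sums have the same $j$ and the same dust part, and their core parts have equal $G_0$-sums. If $j\le1$, equal sums force equal core parts, so such fibres are singletons. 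The converse direction is immediate.

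Part (2) follows from (1). A collision inside $S$ is a pair of distinct core $j$-multisets with equal sums, completed by a common dust part. Completing with $h-j$ copies of a core atom instead turns it into a collision of $h$-multisets from the core part alone, which is $B_h$-type. I would check the converse carefully for $g>1$. Suppose each fibre of the core part has at most $g$ multisets, and look at a fibre of $S$ at level $j<h$. Adding a fixed core atom $h-j$ times maps the $j$-multisets of that fibre injectively into a single $h$-fibre, so the fibre size is again at most $g$; the same padding handles the $B_h$ case. Mass then gives $\delta_{h,g}(X)=(1-q)(1-\mu_g(\rho))$, because dust is always kept. Part (3) is direct computation, with $H=\eta(q)+(1-q)H(\rho)+q\log N$.

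For part (4), the lower bound $M_\alpha(U_h)\ge c\Psi^h$ comes from multisets of $h$ distinct dust atoms. There are $\binom Nh\sim N^h/h!$ of them, each of weight $h!(q/N)^h$, so together they contribute about $(h!)^{\alpha-1}N^{h(1-\alpha)}q^{h\alpha}=(h!)^{\alpha-1}\Psi^h$. For the difference, by (1) only fibres whose multisets carry $j\ge2$ core atoms contribute. Group them by $j$ and by dust part $K$. Subadditivity bounds each fibre's contribution by $\sum_u w_u^\alpha$, which is at most the $M_\alpha$-mass of the class $j$ multisets. That mass is bounded by $P_\alpha(\text{core})^j$ times $(\text{dust power sum})^{h-j}$ times a constant $(h!)^{\alpha}$, by the computation of Proposition~\ref{prop:MU}. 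Here $P_\alpha(\text{core})\le\sum_a\rho_a^\alpha$ and the dust power sum is $N^{1-\alpha}q^\alpha=\Psi$, so the class contributes at most $c'\Psi^{h-j}$.

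Finally, once $\Psi$ is large, $M_\alpha(S_h)\ge M_\alpha(U_h)-c'\sum_j\Psi^{h-j}\ge\tfrac12c\Psi^h$. Then $\Delta^{[g]}_{\alpha,h}\le\Delta_{\alpha,h}=\frac1{1-\alpha}\log\bigl(1+\frac{M_\alpha(U_h)-M_\alpha(S_h)}{M_\alpha(S_h)}\bigr)=O(\Psi^{-2})$, where $g$-labels can only help. The main obstacle is the converse in (2) for $g>1$: lower-level fibres of the core must be controlled by $h$-level fibres, and the padding injection is the key check.
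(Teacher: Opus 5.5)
Your proposal is correct and follows essentially the paper's proof. Part (1) uses base-$B$ decoding of the $\Z$-coordinate plus padding the dust parts to apply the $B_h$ property; part (2) uses padding to pass between levels $j$ and $h$ of $B_h[g]$; part (4) uses the all-dust lower bound and an $O(\Psi^{h-j})$ bound on the $\alpha$-mass of multisets with $j\ge2$ core atoms. The one cosmetic difference is that you bound that $\alpha$-mass by factoring it through the core and dust power sums, in the spirit of Proposition~\ref{prop:MU}, whereas the paper multiplies a maximal weight by a count of multisets; both give the same estimate.
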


\begin{proof}
(1) The $\Z$-coordinate of the sum of a multiset with $j$ core atoms and dust
part $E$ is $j+B\sum_{d\in E}t_d$, and $0\le j\le h<B$, so that coordinate
determines $j$ and $\sum_{d\in E}t_d$; the $G_0$-coordinate is the sum of the
core part. Two multisets in one fibre therefore agree in $j$ and in
$\sum_{d\in E}t_d$; as $\{t_k\}$ is a $B_h$ set it is also a $B_{h-j}$ set (pad
two $(h-j)$-multisets with $j$ copies of $t_1$), so their dust parts agree,
and the core parts are $j$-multisets from $A_0$ with equal $G_0$-sums. If $j\le1$
a $j$-multiset is determined by its sum, so the fibre is a singleton. The
converse is clear.

(2) By (1) the fibre of $s$ through a multiset with $j$ core atoms and dust part
$E$ consists of the multisets obtained by replacing its core part by another
$j$-multiset from $S\cap\mathcal C$ with the same sum. Hence $S$ is $B_h[g]$ if
and only if, for every $j\le h$, no $j$-fold sum from the core part of $S$ has
more than $g$ representations, that is, if and only if the core part of $S$ is
$B_j[g]$ for every $j\le h$. Padding with $h-j$ copies of a fixed element shows
that $B_h[g]$ implies $B_j[g]$, so this says exactly that the core part is
$B_h[g]$. The dust may therefore always be kept in full, and the largest
$B_h[g]$ weight is $q+(1-q)\mu_g(\rho)$.

(3) The law is $(1-q)\rho_a$ on $\lvert A_0\rvert$ atoms and $q/N$ on $N$ atoms.

(4) \emph{Lower bound for $M_\alpha(U_h)$.}
The $\binom{N}{h}$ multisets of $h$ distinct dust atoms have weight
$h!\,(q/N)^h$ each, so
$M_\alpha(U_h)\ge\binom{N}{h}\bigl(h!(q/N)^h\bigr)^{\alpha}
\sim\frac{(h!)^{\alpha}}{h!}\Psi^{h}$, which is the lower bound for
$M_\alpha(U_h)$.

\emph{Upper bound for the fibre defect.} The defect is a sum of nonnegative
fibrewise terms, so it is at most the
total $\alpha$-mass of the multisets lying in nonsingleton fibres, and by (1)
those have $j\ge2$ core atoms. A multiset with $j$ core atoms has weight at most
$h!\,(q/N)^{h-j}$ times a constant depending on $\rho$ and $h$, and there are at
most $\lvert A_0\rvert^{j}\binom{N+h-j-1}{h-j}=O_{h,\rho}(N^{h-j})$ of them;
raising to the power $\alpha$ and summing gives
$O\bigl(N^{(h-j)(1-\alpha)}q^{(h-j)\alpha}\bigr)=O(\Psi^{h-j})$ for each
$j\ge2$. Finally, by \eqref{eq:LDelta},
$\Delta^{[g]}_{\alpha,h}(X)\le\Delta_{\alpha,h}(X)
=\frac{1}{1-\alpha}\log\frac{M_\alpha(U_h)}{M_\alpha(S_h)}
\le\frac{1}{1-\alpha}\cdot\frac{M_\alpha(U_h)-M_\alpha(S_h)}{M_\alpha(S_h)}$,
and $M_\alpha(S_h)\ge\frac12 c\Psi^{h}$ for large $N$, so the ratio is
$O(\Psi^{-2})$.
\end{proof}

\begin{proposition}[dilution]\label{prop:dilute}
Let $h\ge2$, $g\ge1$, $D>0$, and let $0<\alpha<1$ and $\beta>\alpha$. Let $A_0$,
$\rho$ be as in Example~\ref{ex:dust} with $H_\beta(\rho)<D$. Then there is
$q=q(N)\to0$ such that $H_\beta(X^{(q)}_{\rho,N})\le D$ for all large $N$ and
$\Psi=N^{1-\alpha}q^{\alpha}\to\infty$; consequently
\[
  \Delta^{[g]}_{\alpha,h}\bigl(X^{(q)}_{\rho,N}\bigr)\longrightarrow0,
  \qquad
  \delta_{h,g}\bigl(X^{(q)}_{\rho,N}\bigr)\longrightarrow1-\mu_g(\rho) ,
\]
and therefore $\Phi^{[g]}_{\alpha,\beta,D,h}(C)\ge1-\mu_g(\rho)$ for every
$C>0$.
\end{proposition}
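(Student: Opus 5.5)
The plan is to choose $q$ explicitly as a function of $N$ so that three requirements hold at once: the budget $H_\beta\le D$ is respected, $q\to0$, and $\Psi=N^{1-\alpha}q^\alpha\to\infty$. Once this is done, Lemma~\ref{lem:dust} does the rest. Part (4) gives $\Delta^{[g]}_{\alpha,h}=O(\Psi^{-2})\to0$, and part (2) gives $\delta_{h,g}=(1-q)(1-\mu_g(\rho))\to1-\mu_g(\rho)$. For the final claim, fix $C>0$. For all large $N$ the weighting $X^{(q(N))}_{\rho,N}$ is admissible in \eqref{eq:Phig}: its support is finite, so $M_\alpha(U_h)<\infty$, and its deficit is at most $C$. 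Hence $\Phi^{[g]}_{\alpha,\beta,D,h}(C)\ge\limsup_N\delta_{h,g}=1-\mu_g(\rho)$. The $B_h$ set $\{t_k\}$ of size $N$ exists for every $N$, for instance $t_k=(h+1)^k$ by base-$(h+1)$ digits.

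\emph{Choice of $q$.} I would take $q=N^{-\kappa}$ with the exponent $\kappa$ chosen in the window dictated by Remark~\ref{rem:offdiagonal}. We need $\Psi=N^{1-\alpha-\kappa\alpha}\to\infty$, i.e.\ $\kappa<(1-\alpha)/\alpha$. For the budget, the cases split as follows.

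If $\beta<1$, Lemma~\ref{lem:dust}(3) gives $P_\beta=(1-q)^\beta P_\beta(\rho)+N^{1-\beta-\kappa\beta}$. Taking $\kappa>(1-\beta)/\beta$ makes the dust term $o(1)$, so $P_\beta\to P_\beta(\rho)$. Since $H_\beta(\rho)<D$ means $P_\beta(\rho)<e^{(1-\beta)D}$, the budget holds for large $N$. The window $(1-\beta)/\beta<\kappa<(1-\alpha)/\alpha$ is nonempty exactly because $t\mapsto(1-t)/t$ is strictly decreasing and $\alpha<\beta$.

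If $\beta>1$, then $P_\beta\ge(1-q)^\beta P_\beta(\rho)\to P_\beta(\rho)>e^{-(\beta-1)D}$, so any $\kappa>0$ below $(1-\alpha)/\alpha$ works.

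If $\beta=1$, then $H=(1-q)H(\rho)+\eta(q)+q\log N\to H(\rho)<D$, since $q\log N=N^{-\kappa}\log N\to0$; again any $\kappa\in(0,(1-\alpha)/\alpha)$ works.

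In all cases $q\to0$.

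\emph{Expected obstacle.} The main point to check is the continuity step at $\beta\ne1$, which is where strictness of $H_\beta(\rho)<D$ is used to absorb the $o(1)$ perturbation. One must keep track of the sign of $1/(1-\beta)$: a strict inequality on $P_\beta$ in the appropriate direction translates into $H_\beta<D$, as in the supercritical proof. The other check is that Lemma~\ref{lem:dust}(4) applies; this only needs $\Psi\to\infty$, which the chosen window guarantees. Everything else is bookkeeping.
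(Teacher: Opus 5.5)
Your proof is correct and follows the paper's argument. You choose $q=q(N)$ separately for $\beta<1$, $\beta=1$ and $\beta>1$ so that $H_\beta\le D$, $q\to0$ and $\Psi\to\infty$, and then read both limits off Lemma~\ref{lem:dust}(2),(4).

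The only difference is in the choice of $q$, and it is cosmetic. You use a single power law $q=N^{-\kappa}$ with $\kappa<(1-\alpha)/\alpha$, and additionally $\kappa>(1-\beta)/\beta$ when $\beta<1$. The dust's budget contribution then vanishes and is absorbed by the strict inequality $H_\beta(\rho)<D$. The paper instead spends a fixed share of the slack, taking $q=\eta^{1/\beta}N^{-(1-\beta)/\beta}$ for $\beta<1$ and $q=c/\log N$ for $\beta=1$. This makes $\Psi$ grow faster, but the proof does not need that.
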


\begin{proof}
The two displayed limits follow from Lemma~\ref{lem:dust}(2),(4) once $q\to0$ and
$\Psi\to\infty$, and they give the last assertion because for each fixed $C>0$ the
variable $X^{(q(N))}_{\rho,N}$ is admissible for $C$ as soon as $N$ is large. It
remains to choose $q$.

\emph{Case $\beta<1$.} Put $\Sigma=\sum_a\rho_a^{\beta}=e^{(1-\beta)H_\beta(\rho)}$, so
$\eta:=e^{(1-\beta)D}-\Sigma>0$, and take
$q=\eta^{1/\beta}N^{-(1-\beta)/\beta}$. Then $q\to0$ and, by
Lemma~\ref{lem:dust}(3), $P_\beta(X)\le\Sigma+\eta=e^{(1-\beta)D}$, i.e.\
$H_\beta(X)\le D$; and
$\Psi=\eta^{\alpha/\beta}N^{1-\alpha/\beta}\to\infty$ because $\alpha<\beta$.

\emph{Case $\beta=1$.} Choose $c$ with $0<c<D-H(\rho)$ and put $q=c/\log N$. Then $q\to0$
and, by Lemma~\ref{lem:dust}(3), $H(X)\le H(\rho)+\eta(q)+c\le D$ for all large
$N$; and $\Psi=c^{\alpha}N^{1-\alpha}(\log N)^{-\alpha}\to\infty$ because
$\alpha<1$.

\emph{Case $\beta>1$.} Here $\Sigma=\sum_a\rho_a^\beta=e^{(1-\beta)H_\beta(\rho)}
>e^{-(\beta-1)D}$ since $1-\beta<0$, so there is $q_0>0$ with
$(1-q_0)^{\beta}\Sigma\ge e^{-(\beta-1)D}$. Fix
$\theta\in\bigl(0,\frac{1-\alpha}{\alpha}\bigr)$ and put
$q=\min\{q_0,N^{-\theta}\}$. Then $q\to0$ and
$P_\beta(X)\ge(1-q)^{\beta}\Sigma\ge e^{-(\beta-1)D}$, which for $\beta>1$ is
$H_\beta(X)\le D$. Moreover $N^{-\theta}<q_0$ for all large $N$, so that
$q=N^{-\theta}$ eventually and hence
$\Psi=N^{1-\alpha-\theta\alpha}\to\infty$, the exponent being positive because
$\theta<(1-\alpha)/\alpha$.
\end{proof}

Two choices of core now give the two statements we need. The first keeps
everything inside $\Z$.

\begin{corollary}[no stability over $\Z$ off the range]\label{cor:dilZ}
Let $h\ge2$, $g\ge1$, $D>0$, $0<\alpha<1$ and $\beta>\alpha$. Then there is
$c=c(\alpha,\beta,D,h,g)>0$ with
$\Phi^{\Z,[g]}_{\alpha,\beta,D,h}(C)\ge c$ for every $C>0$.
\end{corollary}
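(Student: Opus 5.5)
The plan is to apply Proposition~\ref{prop:dilute} with $G_0=\Z$. Then $G=G_0\times\Z=\Z^2$, and one further step brings the construction back into $\Z$. The core should be a fixed finite set $A_0\subset\Z$ that is not $B_h[g]$, carrying a weighting $\rho$ with $H_\beta(\rho)<D$ and $\mu_g(\rho)<1$. This gives $c=1-\mu_g(\rho)>0$.

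First, choose the core. Take any finite $A_0\subset\Z$ that is not $B_h[g]$; for instance $A_0=F$ from Lemma~\ref{lem:BC}, or simply $\{1,\dots,n\}$ with $n$ large enough that $b_{h,g}<n$. We want a weighting with small entropy that still forces deletion. Put $\rho=(1-\epsilon)$ on one point $a_0$ and spread $\epsilon$ uniformly over the remaining $n-1$ points. Then $H_\beta(\rho)\to0$ as $\epsilon\downarrow0$ for every $\beta>0$, as in the proof of Proposition~\ref{prop:lower1}, so $H_\beta(\rho)<D$ for small $\epsilon$. Moreover $\mu_g(\rho)=(1-\epsilon)+\epsilon\,(b'-1)/(n-1)$, where $b'\le b_{h,g}(A_0)<n$ is the largest size of a $B_h[g]$ subset containing $a_0$. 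Hence $1-\mu_g(\rho)\ge\epsilon/(n-1)>0$. This $\epsilon$ depends only on $\beta,D,h,g$, and the resulting constant $c$ is allowed to be tiny, since the corollary only claims some positive $c$.

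Second, embed into $\Z$. Proposition~\ref{prop:dilute} gives, for every $C>0$, a variable $X$ on $\Z\times\Z$ with finite support in each instance. It satisfies $H_\beta(X)\le D$, $\Delta^{[g]}_{\alpha,h}(X)\le C$ and $\delta_{h,g}(X)\ge c/2$ for large $N$. Apply the injective homomorphism $\phi(x,z)=x+Kz$ with $K$ larger than $h$ times the largest absolute value of the first coordinates occurring. On a bounded box containing all $h$-fold sums of the support, $\phi$ is injective. Hence $\phi$ preserves which $h$-multisets share a sum, and therefore all fibres of $s$. So $\Delta^{[g]}_{\alpha,h}$, $\delta_{h,g}$ and $H_\beta$ are unchanged.

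The main obstacle is only this Freiman-type embedding check: it must be injective on $h$-fold sums of the actual support. The support is finite for each $N$, so $K$ may depend on $N$. The dust coordinates $Bt_k$ sit in the second coordinate and are unaffected by the choice of $K$, so the check is routine.
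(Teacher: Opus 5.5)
Your proposal is correct and is essentially the paper's proof. You apply Proposition~\ref{prop:dilute} with $G_0=\Z$ to a non-$B_h[g]$ core carrying one heavy atom and $\varepsilon$-light others (you are right that the $B_{h-1}$ property of $F$ is not needed), and then move into $\Z$ by a carry-free encoding. The paper does this by rebuilding Example~\ref{ex:dust} in base $B$ and rechecking Lemma~\ref{lem:dust}(1), while you push forward along the Freiman $h$-isomorphism $\phi(x,z)=x+Kz$, which amounts to the same thing. Two small corrections: $\phi$ is not injective on $\Z^2$, only on the set of $h$-fold sums of the support, and the threshold $K>hR$ ($R$ the largest first coordinate) works because your suggested cores lie in $\Z_{>0}$, whereas for a core with elements of both signs you should take $K>2hR$.
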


\begin{proof}
Take $G_0=\Z$ and $A_0=F\subset\{1,\dots,M\}$ finite, $B_{h-1}$ and not
$B_h[g]$, as in Lemma~\ref{lem:BC}, and let $\rho$ put weight $1-(\lvert F\rvert-1)\varepsilon$
on one point of $F$ and $\varepsilon$ on each of the other $\lvert F\rvert-1$, with
$\varepsilon>0$ so small that $H_\beta(\rho)<D$. Since $F$ is not $B_h[g]$, no
$B_h[g]$ subset of $F$ is all of $F$, so $\mu_g(\rho)\le1-\varepsilon$ and
Proposition~\ref{prop:dilute} gives the bound with $c=\varepsilon$.

For the construction to live in $\Z$, realize Example~\ref{ex:dust} in base $B$
instead of in $\Z\times\Z$: with $B>hM+h\max_kt_k$ replace $(a,1)$ by
$aB^{0}+B^{1}$ and $(0,Bt_k)$ by $t_kB^{2}+B^{3}$. No carry occurs, so the
base-$B$ digits in positions $0,1,2,3$ of a sum return the core parameter sum,
the number of core atoms, the dust parameter sum and the number of dust atoms;
this is exactly the information used in the proof of Lemma~\ref{lem:dust}(1), so
that lemma and everything after it hold verbatim.
\end{proof}

The second choice uses torsion, and gives the exact value.

\begin{corollary}[the exact floor for $\alpha<\beta$]\label{cor:dilexact}
Let $h\ge2$, $D>0$, $0<\alpha<1$ and $\beta>\alpha$. Then
\[
  \Phi_{\alpha,\beta,D,h}(C)\;=\;1-m_\beta(D)
  \qquad\text{for every }C>0 .
\]
\end{corollary}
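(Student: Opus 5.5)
The equality will follow from two inequalities: an upper bound from the universal ceiling, and a lower bound from the dusted-core construction with a core whose only $B_h$ subsets are singletons.

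\emph{Upper bound.} The inequality $\Phi_{\alpha,\beta,D,h}(C)\le1-m_\beta(D)$ is exactly Proposition~\ref{prop:ceiling} with $g=1$. It holds for every $C>0$ and every group, so nothing more is needed on this side.

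\emph{Lower bound: the core.} I will use Proposition~\ref{prop:dilute} with $g=1$ and $G_0=(\Z/h\Z)^r$. The point is that in this group every $B_h$ subset is a singleton. Indeed, for distinct $a,b\in G_0$ the multisets $\{a,\dots,a\}$ and $\{b,\dots,b\}$ (each $h$ copies) are distinct but both sum to $ha=hb=0$. Hence for any probability vector $\rho$ on $A_0\subset G_0$ we get $\mu_1(\rho)=\lVert\rho\rVert_\infty$, and Proposition~\ref{prop:dilute} then gives $\Phi_{\alpha,\beta,D,h}(C)\ge1-\lVert\rho\rVert_\infty$ for every $C>0$, provided $H_\beta(\rho)<D$.

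\emph{Lower bound: choice of $\rho$.} Fix $m$ with $m_\beta(D)<m\le1$, and let $\rho$ be the vector $v_m$ of Lemma~\ref{lem:mbeta}. It has at most $\lfloor1/m\rfloor+1$ atoms, so I choose $r$ with $h^r\ge\lfloor1/m\rfloor+1$ and place these atoms on distinct points $A_0\subset G_0$. By Lemma~\ref{lem:mbeta}, $m\mapsto H_\beta(v_m)$ is strictly decreasing and equals $D$ at $m=m_\beta(D)$. Therefore $H_\beta(\rho)<D$, and also $\lVert\rho\rVert_\infty=m$. This yields $\Phi_{\alpha,\beta,D,h}(C)\ge1-m$ for every $C>0$. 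Letting $m\downarrow m_\beta(D)$ gives the matching lower bound. The supremum in \eqref{eq:Phi} need not be attained, but that does not matter for the equality.

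\emph{Checks.} The main thing to verify is that Example~\ref{ex:dust} and Lemma~\ref{lem:dust} really apply to a torsion group $G_0$. I expect no problem: the proof of Lemma~\ref{lem:dust}(1) only reads off the core count $j$ and the dust sum from the $\Z$-coordinate, and compares core parts through their sums in $G_0$. No torsion-freeness is used. The admissibility hypothesis $M_\alpha(U_h)<\infty$ also holds, since the support is finite.

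The delicate point is the strictness $H_\beta(\rho)<D$ required by Proposition~\ref{prop:dilute}. This is exactly why I take $m$ slightly above $m_\beta(D)$ rather than equal to it, relying on the strict monotonicity and continuity in Lemma~\ref{lem:mbeta}. Finally, the statement is specific to $g=1$, since $\mu_g$ for $g>1$ would allow up to $g$ atoms and the construction would not match the ceiling.
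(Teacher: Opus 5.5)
Your proposal is correct and follows the paper's proof: the ceiling of Proposition~\ref{prop:ceiling} gives the upper bound, and the lower bound comes from Proposition~\ref{prop:dilute} with core $\rho=v_m$ placed on distinct points of $(\Z/h\Z)^r$, where $h$-torsion makes every $B_h$ subset a singleton, followed by $m\downarrow m_\beta(D)$. The only difference is cosmetic: the paper takes $1/m\notin\Z$ so that $v_m$ has exactly $\lfloor1/m\rfloor+1$ positive entries, while your choice of $r$ with $h^r\ge\lfloor1/m\rfloor+1$ covers at most that many atoms and works equally well.
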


\begin{proof}
The inequality $\le$ is Proposition~\ref{prop:ceiling}, for every $C$. For the
reverse, fix $m>m_\beta(D)$ with $1/m\notin\Z$, which is no restriction since we
shall let $m\downarrow m_\beta(D)$, so that $H_\beta(v_m)<D$ by
Lemma~\ref{lem:mbeta}, and let $n=\lfloor1/m\rfloor$. Then $v_m$ has exactly
$n+1$ positive entries. Take $G_0=(\Z/h\Z)^{r}$ with $h^{r}\ge n+1$, let $A_0$
consist of $n+1$ distinct points of $G_0$, and let $\rho=v_m$. Any two distinct $a,b\in G_0$ satisfy
$ha=hb=0$, so the multisets $\{a^{h}\}$ and $\{b^{h}\}$ have equal sums and no
$B_h$ subset of $A_0$ has two elements; hence
$\mu_1(\rho)=\lVert v_m\rVert_\infty=m$. Now
Proposition~\ref{prop:dilute} gives $\Phi_{\alpha,\beta,D,h}(C)\ge1-m$ for every
$C>0$, and letting $m\downarrow m_\beta(D)$ finishes the proof.
\end{proof}

\begin{remark}\label{rem:torsion}
Corollary~\ref{cor:dilexact} is the one place where the group matters: its lower
bound uses $h$-torsion, and over $\Z$ every two-element set is a $B_h$ set, so
the argument cannot be run there. Corollary~\ref{cor:dilZ} still gives a positive
floor over $\Z$, but the exact value of
$\Phi^{\Z}_{\alpha,\beta,D,h}$ for $\alpha<\beta$ is left open; see
Question~\ref{q:Zfloor}.
\end{remark}

\section{Moments of the representation function}\label{sec:energy}

We now read the results off in purely finite terms. Throughout this section
$A\subset G$ is finite with $\lvert A\rvert=n$ and $X\sim\Unif(A)$, so that
$H(X)=\log n$, $H_\alpha(X)=\log n$ for every $\alpha$, and
\begin{equation}\label{eq:unifd}
  \delta_{h,g}(X)=1-\frac{b_{h,g}(A)}{n} .
\end{equation}
Write $r_h(z)=\#\{(a_1,\dots,a_h)\in A^h:\sum_ia_i=z\}$ for the ordered
representation function, so that $q_z=r_h(z)/n^h$ and $w_u=\nu_u/n^h$, and for
$\alpha>0$ put
\begin{equation}\label{eq:alphaenergy}
  E^{(\alpha)}_h(A):=\sum_z r_h(z)^{\alpha},
  \qquad
  \Ec^{(\alpha)}_h(n):=\sum_{u\in\Uc_h(A)}\nu_u^{\alpha},
\end{equation}
where $\Ec^{(\alpha)}_h(n)$ depends only on $n$ and $h$. For $\alpha=2$ these are the additive
energy $E_h(A)=E^{(2)}_h(A)$, the number of $2h$-tuples with
$a_1+\dots+a_h=b_1+\dots+b_h$, and $\Ec_h(n)=\Ec^{(2)}_h(n)$, the number of
pairs of ordered $h$-tuples that are rearrangements of one another; so
$\Ec_2(n)=2n^2-n$.

\begin{proposition}[Representation-moment identity]\label{prop:alphaenergy}
Let $\alpha>0$, $\alpha\ne1$. Then
\[
  H_\alpha(S_h)=\frac{h\alpha\log n-\log E^{(\alpha)}_h(A)}{\alpha-1},\qquad
  H_\alpha(U_h)=\frac{h\alpha\log n-\log \Ec^{(\alpha)}_h(n)}{\alpha-1},
\]
and consequently
\begin{equation}\label{eq:Deltaalphaenergy}
  \Delta_{\alpha,h}(X)
  =\frac{1}{1-\alpha}\,\log\frac{\Ec^{(\alpha)}_h(n)}{E^{(\alpha)}_h(A)} .
\end{equation}
In particular $E^{(\alpha)}_h(A)\le\Ec^{(\alpha)}_h(n)$ for $\alpha<1$ and
$E^{(\alpha)}_h(A)\ge\Ec^{(\alpha)}_h(n)$ for $\alpha>1$, with equality in
either case if and only if $A$ is a $B_h$ set.
\end{proposition}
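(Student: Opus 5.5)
The plan is a direct computation of both power sums from the uniform law, after which the comparison statements follow from Proposition~\ref{prop:basic}. Since $X\sim\Unif(A)$, every ordered $h$-tuple in $A^h$ has probability $n^{-h}$. The law of $S_h$ is therefore $q_z=r_h(z)/n^h$, so
\[
  M_\alpha(S_h)=\sum_z r_h(z)^\alpha n^{-h\alpha}=n^{-h\alpha}E^{(\alpha)}_h(A).
\]
By \eqref{eq:wdef}, each $u\in\Uc_h(A)$ has $w_u=\nu_u n^{-h}$. Hence
\[
  M_\alpha(U_h)=n^{-h\alpha}\Ec^{(\alpha)}_h(n).
\]
The quantity $\Ec^{(\alpha)}_h(n)$ depends only on $n$ and $h$, because the multiset of values $\{\nu_u\}$ is determined by the multiplicity patterns. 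That pattern structure depends only on $|A|$ and not on $G$.

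Next I substitute into $H_\alpha=\log M_\alpha/(1-\alpha)$. This gives
\[
  H_\alpha(S_h)=\frac{\log E^{(\alpha)}_h(A)-h\alpha\log n}{1-\alpha},
\]
which is the stated form after flipping the sign of numerator and denominator. The same computation gives the formula for $H_\alpha(U_h)$. Subtracting the two yields \eqref{eq:Deltaalphaenergy}, since the $h\alpha\log n$ terms cancel.

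For the final sentence I use Proposition~\ref{prop:basic}(1) with $\pi=s$. All finiteness hypotheses hold trivially because $A$ is finite. That result gives $\Delta_{\alpha,h}(X)\ge0$, with equality if and only if $s$ is injective on $\Uc_h(A)$, which is exactly the condition that $A$ is a $B_h$ set. Now read off the sign of $\log(\Ec^{(\alpha)}_h/E^{(\alpha)}_h)$. When $\alpha<1$ the prefactor $1/(1-\alpha)$ is positive, so $\Ec\ge E$. When $\alpha>1$ the prefactor is negative, so $E\ge\Ec$. Equality of the two sums corresponds exactly to $\Delta_{\alpha,h}(X)=0$.

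I expect no real obstacle here; the only care needed is sign bookkeeping when rewriting $1/(1-\alpha)$ as $-1/(\alpha-1)$. As a sanity check, $\Ec_2(n)=2n^2-n$ is consistent with $\sum_u\nu_u^2$ for $h=2$: the sum has $\binom n2$ terms equal to $4$ and $n$ terms equal to $1$, giving $2n(n-1)+n=2n^2-n$.
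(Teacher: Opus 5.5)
Your proposal is correct and follows essentially the paper's proof: it computes $M_\alpha(S_h)=n^{-h\alpha}E^{(\alpha)}_h(A)$ and $M_\alpha(U_h)=n^{-h\alpha}\Ec^{(\alpha)}_h(n)$ from the uniform law, substitutes these into $H_\alpha=(1-\alpha)^{-1}\log M_\alpha$, and subtracts. For the sign and equality statements the paper cites Proposition~\ref{prop:listzero} with $g=1$, while you cite Proposition~\ref{prop:basic}(1) with $\pi=s$; at $g=1$ these are the same fact.
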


\begin{proof}
Since $q_z=r_h(z)/n^h$ and $w_u=\nu_u/n^h$ we have
$M_\alpha(S_h)=n^{-h\alpha}E^{(\alpha)}_h(A)$ and
$M_\alpha(U_h)=n^{-h\alpha}\Ec^{(\alpha)}_h(n)$. Now apply the identity
$H_\alpha=(1-\alpha)^{-1}\log M_\alpha$ and subtract; the sign statements and
the equality case are Proposition~\ref{prop:listzero} with $g=1$.
\end{proof}

\begin{proposition}[Additive-energy identity]\label{prop:energy}
If $X\sim\Unif(A)$ then $H_2(S_h)=2h\log n-\log E_h(A)$ and
$H_2(U_h)=2h\log n-\log\Ec_h(n)$, hence
$\Delta_{2,h}(X)=\log\bigl(E_h(A)/\Ec_h(n)\bigr)$. In particular
$E_h(A)\ge\Ec_h(n)$, with equality if and only if $A$ is $B_h$.
\end{proposition}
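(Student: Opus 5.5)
The plan is to specialize Proposition~\ref{prop:alphaenergy} to $\alpha=2$, where it already contains everything needed. With $X\sim\Unif(A)$ we have $q_z=r_h(z)/n^h$ and $w_u=\nu_u/n^h$. Hence
\[
  M_2(S_h)=n^{-2h}\sum_z r_h(z)^2=n^{-2h}E_h(A),\qquad
  M_2(U_h)=n^{-2h}\sum_u\nu_u^2=n^{-2h}\Ec_h(n).
\]
Since $H_2=-\log M_2$, the two entropy formulas follow at once. Subtracting them gives
\[
  \Delta_{2,h}(X)=H_2(U_h)-H_2(S_h)=\log\frac{E_h(A)}{\Ec_h(n)},
\]
which is \eqref{eq:Deltaalphaenergy} with $1/(1-\alpha)=-1$.

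Next I will confirm that the combinatorial readings of $E_h(A)$ and $\Ec_h(n)$ agree with the moment definitions in \eqref{eq:alphaenergy}. For $E_h(A)$: the quantity $\sum_z r_h(z)^2$ counts pairs of ordered $h$-tuples with equal sums, which is the additive energy. For $\Ec_h(n)$: $\nu_u$ is the number of ordered tuples realizing $u$, so $\sum_u\nu_u^2$ counts pairs of ordered tuples with the same underlying multiset, that is, pairs of rearrangements. These are the identifications stated before the proposition, so nothing new is needed here.

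Finally, the inequality $E_h(A)\ge\Ec_h(n)$ is exactly $\Delta_{2,h}(X)\ge0$ from Proposition~\ref{prop:basic}(1). Equality holds iff $\Delta_{2,h}(X)=0$, and by Proposition~\ref{prop:listzero} with $g=1$ this happens iff $\supp X=A$ is a $B_h$ set. The finiteness hypotheses hold trivially because $A$ is finite.

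I expect no real obstacle. The only point requiring care is sign bookkeeping at $\alpha=2$: the factor $1/(1-\alpha)=-1$ is what turns the logarithm of $\Ec/E$ into the logarithm of $E/\Ec$.
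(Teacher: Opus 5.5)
Your proposal is correct and follows the paper's proof: the paper also obtains this by specializing Proposition~\ref{prop:alphaenergy} to $\alpha=2$ using $H_2=-\log M_2$, and takes the inequality and equality case from Proposition~\ref{prop:listzero} with $g=1$. Your explicit computation of $M_2(S_h)$ and $M_2(U_h)$ and your sign bookkeeping are accurate.
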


\begin{proof}
This is Proposition~\ref{prop:alphaenergy} at $\alpha=2$, where
$H_2=-\log M_2$.
\end{proof}

So the deficit at order $\alpha$ measures exactly how far the $\alpha$-th moment
of the representation function is from the value it takes on a $B_h$ set, and
letting $\alpha$ range over $(0,\infty)$ probes all nontrivial positive moments.
The classical additive energy is the single moment $\alpha=2$. Note also that
$\Rc^{[1]}_{\alpha,h}(A)=\bigl\lvert\Ec^{(\alpha)}_h(n)-E^{(\alpha)}_h(A)
\bigr\rvert$, so \eqref{eq:Rg} is the $g$-split refinement of the same moment
gap.

\subsection{The finite removal bounds}

\begin{proof}[Proof of Corollary~\ref{cor:finitemoment}]
Take $X\sim\Unif(A)$, so that $\delta_{h,g}(X)=1-b_{h,g}(A)/n$ by
\eqref{eq:unifd}. Since $w_u=\nu_u/n^h$, every expression in \eqref{eq:Lg} with
$f=s$ is $n^{-h\alpha}$ times the corresponding expression in \eqref{eq:Rg}, so
$\Lc_{\alpha,g}(s)=n^{-h\alpha}\Rc^{[g]}_{\alpha,h}(A)$. Now
Theorem~\ref{thm:rawmoment} gives
\[
  1-\frac{b_{h,g}(A)}{n}\ \le\
  \Bigl(\frac{n^{-h\alpha}\Rc^{[g]}_{\alpha,h}(A)}{d_\alpha}
   \Bigr)^{1/(h\alpha)}
  =\frac1n\Bigl(\frac{\Rc^{[g]}_{\alpha,h}(A)}{d_\alpha}\Bigr)^{1/(h\alpha)},
\]
and multiplying by $n$ finishes the proof; the equivalent form is obtained by
raising both sides to the power $h\alpha$, and the case $\alpha=1/h$ by
substituting $h\alpha=1$ and $d_{1/h}=2-2^{1/h}$.
\end{proof}

The linear bound at the critical order is sharp in order.

\begin{proposition}[Sharpness at the critical order]\label{prop:momentsharp}
Fix $h\ge2$ and $g\ge1$ and let $F$ be $B_{h-1}$ and not $B_h[g]$, with
$B>hM$ as in Example~\ref{ex:blocks}. For $N\ge1$ put $A_N=A_{F,N}$, so that
$\lvert A_N\rvert=\lvert F\rvert N$. Then
\[
  \lvert A_N\rvert-b_{h,g}(A_N)=\gamma_{h,g}(F)\,\lvert A_N\rvert
  \qquad\text{and}\qquad
  \Rc^{[g]}_{1/h,h}(A_N)=\frac{\Rc^{[g]}_{1/h,h}(F)}{\lvert F\rvert}\,\lvert A_N\rvert .
\]
In particular both sides of the last display in
Corollary~\ref{cor:finitemoment} are $\asymp_{h,g,F}\lvert A_N\rvert$, so the
linear dependence there cannot be improved to any smaller power of
$\Rc^{[g]}_{1/h,h}$.
\end{proposition}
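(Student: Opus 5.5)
The plan is to read both identities off the uniform specialization of Lemma~\ref{lem:blocks}. Take $q=1$ formally; equivalently, work directly with the uniform weighting $X\sim\Unif(A_N)$, with no heavy atom. The fibre structure in Lemma~\ref{lem:blocks}(1) does not involve the weights at all: its proof is the base-$B$ digit argument, which uses only $B>hM$ and the $B_{h-1}$ property of $F$. So it holds verbatim for the multiset space $\Uc_h(A_N)$. Namely, two distinct multisets from $A_N$ share a sum if and only if both lie entirely in a single block $k$ and their parameter multisets are distinct elements of $\Uc_h(F)$ with equal sums. All other fibres of $s$ are singletons.

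For the first identity, I would invoke the deletion-cost statement of Lemma~\ref{lem:blocks}(2). A subset of $A_N$ is $B_h[g]$ if and only if the parameter set it uses in each block is a $B_h[g]$ subset of $F$. Hence $b_{h,g}(A_N)=N\,b_{h,g}(F)$, and therefore
\[
\lvert A_N\rvert-b_{h,g}(A_N)=N\bigl(\lvert F\rvert-b_{h,g}(F)\bigr)=\gamma_{h,g}(F)\,\lvert F\rvert N=\gamma_{h,g}(F)\lvert A_N\rvert .
\]

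For the second identity, I would use the definition \eqref{eq:Rg} with $\alpha=1/h$. The multiplicities $\nu_u$ of a multiset lying in block $k$ with parameter multiset $T$ equal $\nu_T$, since the map $a\mapsto aB^k+B^{N+k}$ is injective on $F$. The defect in \eqref{eq:Rg} is a sum over fibres, and singleton fibres contribute $\nu_u^\alpha-\nu_u^\alpha=0$. The labelling $\chi$ can be chosen independently on each fibre, so the infimum splits as a sum over the $N$ blocks. Each block contributes exactly the infimum defining $\Rc^{[g]}_{1/h,h}(F)$, because the fibres within a block correspond bijectively, with matching weights, to the fibres of $s$ on $\Uc_h(F)$. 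This gives $\Rc^{[g]}_{1/h,h}(A_N)=N\,\Rc^{[g]}_{1/h,h}(F)=\Rc^{[g]}_{1/h,h}(F)\lvert A_N\rvert/\lvert F\rvert$. This step mirrors Lemma~\ref{lem:blocks}(4) with $r=1$.

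The only point needing care is this splitting of the infimum. Inside the absolute value all fibre terms have the same sign, by subadditivity for $\alpha<1$. So the absolute value of the sum equals the sum of the absolute values, and minimizing fibre by fibre is legitimate.

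The final sentence then follows. By Lemma~\ref{lem:blockconst}, $\Rc^{[g]}_{1/h,h}(F)>0$, and $\gamma_{h,g}(F)>0$ because $F$ is not $B_h[g]$. So both sides of the critical bound in Corollary~\ref{cor:finitemoment} are positive constant multiples of $\lvert A_N\rvert$. Suppose a bound of the form $n-b\le c\,\Rc^{\theta}$ held with $\theta<1$. Evaluating it on $A_N$ with $N\to\infty$ gives $\gamma\lvert A_N\rvert\lesssim\lvert A_N\rvert^{\theta}$, which is a contradiction.
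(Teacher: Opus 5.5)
Your proposal is correct and follows essentially the same route as the paper. You transfer the fibre structure of Lemma~\ref{lem:blocks}(1) to the heavy-atom-free set $A_N$, read off $b_{h,g}(A_N)=N\,b_{h,g}(F)$ from Lemma~\ref{lem:blocks}(2), and split the infimum in \eqref{eq:Rg} block by block as in Lemma~\ref{lem:blocks}(4). Your explicit check that all fibrewise terms share a sign, so that the absolute value and the infimum decompose over fibres, is the point the paper leaves implicit.
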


\begin{proof}
By Lemma~\ref{lem:blocks}(1) applied with the heavy atom removed, which changes
nothing since no collision involves $y$, the nonsingleton fibres of
$s$ on $\Uc_h(A_N)$ are $N$ disjoint copies of those of $s$ on $\Uc_h(F)$, with
the same multiplicities $\nu_u$. Hence
$\Rc^{[g]}_{1/h,h}(A_N)=N\,\Rc^{[g]}_{1/h,h}(F)$ by the argument of
Lemma~\ref{lem:blocks}(4), which is the second identity because
$\lvert A_N\rvert=\lvert F\rvert N$. By Lemma~\ref{lem:blocks}(2),
$b_{h,g}(A_N)=Nb_{h,g}(F)$, which is the first. Both are positive multiples of
$\lvert A_N\rvert$ because $F$ is not $B_h[g]$.
\end{proof}

At the Shannon order the same specialization gives a bound in terms of an
explicit finite ambiguity. Define the \emph{$g$-split representation ambiguity}
\begin{equation}\label{eq:ambiguity}
  \Cc_{h,g}(A):=\frac{\Lambda^{[g]}_h(A)}{n^{h}}\;\ge\;0 ,
\end{equation}
with $\Lambda^{[g]}_h$ as in \eqref{eq:LambdaF}; for $g=1$ this is
$\frac{1}{n^h}\sum_{u\in\Uc_h(A)}\nu_u\log\bigl(r_h(s(u))/\nu_u\bigr)$, the
expected logarithmic ambiguity of the representation of $s(u)$ when $u$ is drawn
from the multiset law induced by a uniformly random ordered $h$-tuple. By
Lemma~\ref{lem:blockconst}, $\Cc_{h,g}(A)$ vanishes exactly when $A$ is
$B_h[g]$.

\begin{corollary}[Entropy removal for finite sets]\label{cor:finiteshannon}
Let $A\subset G$ be finite with $\lvert A\rvert=n$ and let $g\ge1$. Then for
every $\tau\in(0,1)$,
\[
  1-\frac{b_{h,g}(A)}{n}\ \le\ \frac{\log n}{\log(1/\tau)}
   +\frac{\Cc_{h,g}(A)}{\tau^{\,h-1}\log2} .
\]
\end{corollary}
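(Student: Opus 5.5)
We specialize Theorem~\ref{thm:general}(3) to the uniform law, following the proof of Corollary~\ref{cor:finitemoment}. Take $X\sim\Unif(A)$, so that $H(X)=\log n$ and $\delta_{h,g}(X)=1-b_{h,g}(A)/n$ by \eqref{eq:unifd}. The heart of the argument is the chain in Theorem~\ref{thm:general}(3) at $\alpha=\beta=1$ and $f=s$. For each $\tau\in(0,1)$, Lemma~\ref{lem:deletion} applied with $\mathcal A=A_\tau$ gives
\[
  \delta_{h,g}(X)\le\PR(X\notin A_\tau)+\sum_{u\in I_{g,A_\tau}}m_u .
\]
We then bound each of the two terms on the right separately.

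The tail term is at most $H(X)/\log(1/\tau)=\log n/\log(1/\tau)$ by Lemma~\ref{lem:tail}(2). For the second term, \eqref{eq:mubound} with $h\alpha=h$ gives $m_u\le w_u\tau^{-(h-1)}$ for every $u\in I_{g,A_\tau}$. Summing this over $u\in I_g$ and applying Lemma~\ref{lem:shannonmass} bounds the second term by $\Delta^{[g]}_{1,h}(X)/(\tau^{h-1}\log 2)$. This requires no budget beyond $H(X)<\infty$, which holds since $A$ is finite.

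It remains to identify $\Delta^{[g]}_{1,h}(X)$ with $\Cc_{h,g}(A)$. Fix a labelling $\chi$. By \eqref{eq:shannonid},
\[
  \Delta_{1,(s,\chi)}(X)=\sum_u w_u\log\frac{\sigma(u)}{w_u}.
\]
Here $w_u=\nu_u/n^h$, and $\sigma(u)=\nu_{z,c}/n^h$ for the class $(z,c)$ of $u$. The factors $n^{-h}$ cancel inside the logarithm, so $\Delta_{1,(s,\chi)}(X)$ equals $n^{-h}$ times the expression inside the infimum in \eqref{eq:LambdaF}, taken for the set $A$ in place of $F$. Taking the infimum over $\chi$ on both sides then gives $\Delta^{[g]}_{1,h}(X)=\Lambda^{[g]}_h(A)/n^h=\Cc_{h,g}(A)$ by \eqref{eq:ambiguity}. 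Substituting this into the bound for the second term yields the stated inequality.

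The only point needing care is this identification of the Shannon list deficit with the normalized $\Lambda^{[g]}_h$. In particular, the infima over $\chi$ must range over the same set on both sides, namely labellings of $\Uc_h(A)$, and they do. Everything else is a direct substitution into results already proved.
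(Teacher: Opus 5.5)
Your proposal is correct and is essentially the paper's proof. The paper takes $X\sim\Unif(A)$, identifies $\Delta^{[g]}_{1,h}(X)=\Lambda^{[g]}_h(A)/n^h=\Cc_{h,g}(A)$ via \eqref{eq:shannonid} exactly as you do, and then invokes Theorem~\ref{thm:general}(3) with $D=\log n$ and $C=\Cc_{h,g}(A)$; the chain you unpack by hand (Lemma~\ref{lem:deletion} on $A_\tau$, Lemma~\ref{lem:tail}(2), \eqref{eq:mubound}, Lemma~\ref{lem:shannonmass}) is that theorem's proof, with the one cosmetic caveat that $m_u\le w_u\tau^{-(h-1)}$ holds only on $I_{g,A_\tau}$, so you sum there before enlarging to $I_g$.
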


\begin{proof}
Take $X\sim\Unif(A)$, so $H(X)=\log n$ and $\delta_{h,g}(X)=1-b_{h,g}(A)/n$. By
\eqref{eq:shannonid} and $w_u=\nu_u/n^h$ we have
$\Delta^{[g]}_{1,h}(X)=\Lambda^{[g]}_h(A)/n^h=\Cc_{h,g}(A)$. Now apply
Theorem~\ref{thm:general}(3) with $D=\log n$ and $C=\Cc_{h,g}(A)$.
\end{proof}

The bound is useful when $\log(1/\tau)$ is large compared with $\log n$ and
$\Cc_{h,g}(A)\ll\tau^{h-1}$. Specializing at $\alpha=2$ instead of at
$\alpha\le1/h$ gives only the trivial counting bound, which we record for
comparison.

\begin{corollary}[A second-moment deletion bound]\label{cor:combin}
Every finite $A\subset G$ with $\lvert A\rvert=n$ has a $B_h[g]$ subset $B$ with
$\lvert A\rvert-\lvert B\rvert\le\bigl(E_h(A)-\Ec_h(n)\bigr)/2$.
\end{corollary}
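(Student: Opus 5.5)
Apply the machinery at loss order $\alpha=2$ with the uniform weighting, in parallel with the proof of Corollary~\ref{cor:finitemoment}, but using Theorem~\ref{thm:coarse} directly in its unnormalized form. Give each $u\in\Uc_h(A)$ the weight $\nu_u$ and let $\pi=s$. Since $g\ge1$, a constant labelling is allowed, and a fortiori a single fibre class. The plan is therefore to bound the $g=1$ excess and then note that the $B_h[g]$ deletion needs no more. For $g=1$ the left side of \eqref{eq:collision} with $\alpha=2$ is
\[
  \sum_z r_h(z)^2-\sum_u\nu_u^2=E_h(A)-\Ec_h(n),
\]
which is nonnegative by Proposition~\ref{prop:energy}. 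Theorem~\ref{thm:coarse} with $d_2=2^2-2=2$ then gives
\[
  2\sum_z\sum_{j\ge2}\nu_{z,j}^{2}\ \le\ E_h(A)-\Ec_h(n).
\]
Here $\nu_{z,j}$ are the sorted multiplicities in the fibre over $z$, and the sum runs over the set $I_1$ of non-heaviest multisets in each fibre. Since $I_g\subseteq I_1$, the same bound controls $\sum_{u\in I_g}\nu_u^2$.

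Next, run the deletion argument of Lemma~\ref{lem:deletion} in counting form. For each $u\in I_g$ delete one of its elements; the result is a $B_h[g]$ set $B$ with $\lvert A\rvert-\lvert B\rvert\le\lvert I_g\rvert$. Because $\nu_u\ge1$ for every multiset, $\lvert I_g\rvert\le\sum_{u\in I_g}\nu_u^2\le(E_h(A)-\Ec_h(n))/2$. This is the claimed bound. Equivalently, apply Lemma~\ref{lem:deletion} to $X\sim\Unif(A)$, where every $m_u=1/n$, so the deleted mass is $\lvert I_g\rvert/n$, and multiply by $n$.

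The only step needing care is the correct use of Theorem~\ref{thm:coarse} for unnormalized weights on the finite set $\Uc_h(A)$. Its statement allows an arbitrary summable positive family, and the inequality is homogeneous, so rescaling by $n^{-h}$ as in Proposition~\ref{prop:alphaenergy} changes nothing. The sign is also fixed: for $\alpha=2>1$ merging increases the power sum, so the absolute value equals $E_h(A)-\Ec_h(n)$. The crude step $\nu_u\le\nu_u^2$ is what makes this bound weak compared with Corollary~\ref{cor:finitemoment}, but it is all the statement requires.
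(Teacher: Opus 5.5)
Your proof is correct and follows the paper's argument. The deletion lemma with uniform weights gives $\lvert A\rvert-\lvert B\rvert\le\lvert I_g\rvert$, and Theorem~\ref{thm:coarse} at $\alpha=2$ (where $d_2=2$), combined with the crude bound $\nu_u\ge1$ (the paper's $w_u\ge n^{-h}$), bounds $\lvert I_g\rvert$ by $(E_h(A)-\Ec_h(n))/2$. The one cosmetic difference is that you invoke the $g=1$ case and then use $I_g\subseteq I_1$, whereas the paper applies the $g$-label version and then uses $M_2\bigl((s,\chi)(U_h)\bigr)\le M_2(S_h)$; the two steps are interchangeable.
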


\begin{proof}
Apply Lemma~\ref{lem:deletion} with $X\sim\Unif(A)$, $f=s$, $\mathcal A=A$; all
atoms are $1/n$, so $m_u=1/n$ and $n-\lvert B\rvert\le\lvert I_g\rvert$. Every
$u$ has $w_u\ge n^{-h}$, so $\lvert I_g\rvert n^{-2h}\le\sum_{u\in I_g}w_u^2
=T_{2,g}(s)$. Theorem~\ref{thm:coarse} at $\alpha=2$, where $d_2=2$, bounds
$2T_{2,g}(s)$ by $M_2\bigl((s,\chi)(U_h)\bigr)-M_2(U_h)$ for every labelling
$\chi$, and $s$ is a coarsening of $(s,\chi)$, so
$M_2\bigl((s,\chi)(U_h)\bigr)\le M_2(S_h)$ by Proposition~\ref{prop:basic}(2),(3).
Hence $\lvert I_g\rvert n^{-2h}\le\frac12\bigl(M_2(S_h)-M_2(U_h)\bigr)
=\frac12\bigl(E_h(A)-\Ec_h(n)\bigr)n^{-2h}$ by
Proposition~\ref{prop:energy}.
\end{proof}

\begin{remark}[the two regimes]\label{rem:regimes}
Corollary~\ref{cor:combin} is much weaker than
Corollary~\ref{cor:finitemoment}: its right-hand side is already of order
$n^{2h-1}$ for $A=\{1,\dots,n\}$, whereas
$\Rc^{[g]}_{1/h,h}(A)\le\Ec^{(1/h)}_h(n)\le(h!)^{1/h}\binom{n+h-1}{h}
=O_h(n^{h})$ for every $A$. The reason is that Theorems~\ref{thm:phase}
and~\ref{thm:general} are calibrated to a fixed budget $D$, whereas a uniform
variable on a growing set has $H_\beta(X)=\log n$ for every $\beta$ and so
explores $D\to\infty$, where the constants $e^{h(1-\beta)D}=n^{h(1-\beta)}$ of
Lemma~\ref{lem:mass} grow polynomially in $n$. What survives that corner is the
entropy-free Theorem~\ref{thm:rawmoment}, in which no $D$ appears; this is why
the low moments $\alpha\le1/h$, not $\alpha=2$, are the useful ones for
counting.
\end{remark}

\section{Bounded multiplicity}\label{sec:multiplicity}

All upper bounds above, and both constructions, were proved for an arbitrary
fixed $g\ge1$. The one exception is the \emph{exact} value $1-m_\beta(D)$ in part
4 of Theorem~\ref{thm:phase}: its lower bound (Corollary~\ref{cor:dilexact}) uses
that a $B_h$ subset of $(\Z/h\Z)^{r}$ has one element, whereas a $B_h[g]$ subset
may have $g$, so for $g\ge2$ only the positivity of Corollary~\ref{cor:dilZ}
survives. Collecting everything gives the following, of which parts 1--3 of
Theorem~\ref{thm:phase} and Theorem~\ref{thm:exactconst} are the case $g=1$.

\begin{theorem}[{Two-order phase diagram for $B_h[g]$ sets}]\label{thm:phaseg}
Fix $h\ge2$, $g\ge1$, $D>0$ and $\alpha,\beta>0$, and let
$\Phi^{[g]}_{\alpha,\beta,D,h}$ be as in \eqref{eq:Phig}.
\begin{enumerate}
\item If $\beta<1$ and $\alpha\ge\beta$ then
$\Phi^{[g]}_{\alpha,\beta,D,h}(C)\asymp C^{\Theta_h(\alpha,\beta)}$ for all
small $C$, the implied constants depending only on $\alpha,\beta,D,h$ and $g$.
\item If $\beta=1$ and $\alpha\ge1$ then
$\lim_{C\downarrow0}\Phi^{[g]}_{\alpha,1,D,h}(C)\log\frac1C=(h\alpha-1)D$.
\item If $\beta>1$ and $\alpha\ge1$ then
$\lim_{C\downarrow0}\Phi^{[g]}_{\alpha,\beta,D,h}(C)=1-e^{-(\beta-1)D/\beta}$.
\item If $0<\alpha<1$ and $\beta>\alpha$ then
$\Phi^{[g]}_{\alpha,\beta,D,h}(C)\ge c>0$ for every $C>0$, with $c$ depending
only on $\alpha,\beta,D,h,g$.
\end{enumerate}
All four hold verbatim for $\Phi^{\Z,[g]}$, and consequently
$\Phi^{[g]}_{\alpha,\beta,D,h}(C)\to0$ if and only if $\beta\le1$ and
$\alpha\ge\beta$, which is Corollary~\ref{cor:classify}. The four cases cover all
$\alpha,\beta>0$, by the computation following Theorem~\ref{thm:phase}.
\end{theorem}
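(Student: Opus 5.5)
Theorem~\ref{thm:phaseg} is a collection of results already proved for general $g$, so the proof is an assembly; I would go part by part, pairing each upper bound with its matching construction.

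For part (1), the upper bound is Theorem~\ref{thm:general}(1) when $h\alpha\le1$ and Theorem~\ref{thm:general}(2) when $h\alpha>1$, both applied with $f=s$ and arbitrary $g$. Since $\Delta^{[g]}_{\alpha,s}=\Delta^{[g]}_{\alpha,h}$ and $\delta_{s,g}=\delta_{h,g}$, these give $O(C^{\Theta_h(\alpha,\beta)})$, and the branch selection is \eqref{eq:thetacases}. The constant $K$ of \eqref{eq:Kall} does not depend on $g$, so the implied constants depend only on $\alpha,\beta,D,h$. The lower bounds come from Propositions~\ref{prop:lower1} and~\ref{prop:lower2}, which are stated for $\Phi^{\Z,[g]}$ with a block $F$ that is $B_{h-1}$ but not $B_h[g]$; Lemma~\ref{lem:BC} supplies such an $F$ for each $g$. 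That choice is the only source of $g$-dependence in the constants. At $h\alpha=1$ both propositions apply and the exponents coincide. Finally, $\Phi^{\Z,[g]}\le\Phi^{[g]}$ lets the integer lower bounds transfer.

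Parts (2) and (3) were proved in Section~\ref{sec:lower} directly for $\Phi^{[g]}$ and $\Phi^{\Z,[g]}$. The proof at $\beta=1$ combines the upper bound \eqref{eq:critrate} with the lower bound from Steps 1--4, letting $F$ run through $F^{(h)}_i$ so that $b_{h,g}(F)/\lvert F\rvert\to0$ for fixed $g$. For $\beta>1$ the upper bound is Theorem~\ref{thm:general}(4) and the lower bound is the frozen-$q_*$ template. I would cite these arguments and note that \eqref{eq:hierarchy} is exactly what makes the constants independent of $g$.

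Part (4) is Corollary~\ref{cor:dilZ}, which gives a floor $c=\varepsilon$ depending on $\alpha,\beta,D,h,g$ through the choice of $F$ and of $\varepsilon$ with $H_\beta(\rho)<D$. It already holds over $\Z$, hence also for $\Phi^{[g]}$. The classification then follows as in Corollary~\ref{cor:classify}: parts (1) and (2) give $\Phi\to0$, while parts (3) and (4) give positive floors. That the four cases cover the quadrant is the computation after Theorem~\ref{thm:phase}.

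The only step needing real care is bookkeeping rather than mathematics: checking that every ingredient invoked (Lemma~\ref{lem:mass}, Lemma~\ref{lem:deletion}, Lemma~\ref{lem:blocks}, Proposition~\ref{prop:dilute}) was genuinely stated with general $g$. I would also confirm that the exact constant $1-m_\beta(D)$ is \emph{not} claimed here, since Corollary~\ref{cor:dilexact} uses $g=1$.
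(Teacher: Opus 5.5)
Your proposal is correct and follows the paper's proof essentially verbatim. The upper bounds come from Theorem~\ref{thm:general} with $f=s$, proved for arbitrary $g$; the lower bounds come from Propositions~\ref{prop:lower1}, \ref{prop:lower2} and the two arguments of Section~\ref{sec:lower}, with $g$ entering only through the choice of $F$ and \eqref{eq:hierarchy}; and part~4 is Corollary~\ref{cor:dilZ}. One small slip: Proposition~\ref{prop:lower2} assumes $h\alpha>1$ strictly, so at $h\alpha=1$ only Proposition~\ref{prop:lower1} applies, which is all you need there.
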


\begin{proof}
For parts 1--3 the upper bounds are Theorem~\ref{thm:general} with $f=s$, whose
four parts are proved for arbitrary $g$, and the lower bounds are
Propositions~\ref{prop:lower1} and~\ref{prop:lower2} and the two proofs in
Section~\ref{sec:lower}, likewise carried out for arbitrary $g$. The only
$g$-dependent inputs there are $\Rc^{[g]}_{\alpha,h}(F)>0$ and
$\Lambda^{[g]}_h(F)>0$, which hold as soon as $F$ is not a $B_h[g]$ set by
Lemma~\ref{lem:blockconst}, and $b_{h,g}(F)/\lvert F\rvert\to0$ along the
Bose--Chowla sequence, which is \eqref{eq:hierarchy}. Part 4 is
Corollary~\ref{cor:dilZ}, which is proved for arbitrary $g$ and is
integer-valued.
\end{proof}

So allowing any fixed number $g$ of representations affects neither the two
stability boundaries $\beta=1$ and $\alpha=\beta$, nor the two exact constants of
parts
2 and 3, nor the location of the stability region; only the constants in part 1
and the value of the dilution floor change. The constants survive by
Lemma~\ref{lem:Fbound}: raising the multiplicity to $g$ costs the counting bound
only a factor $g^{1/h}$, absorbed when the block is sent to infinity.

\section{Open problems}\label{sec:open}

\begin{question}
Theorem~\ref{thm:phase}(1) determines $\Phi_{\alpha,\beta,D,h}$ up to constants.
How do these behave as $\beta\uparrow1$, and is there a scaling limit between the
polynomial and logarithmic regimes? Since $\Theta_h(\alpha,\beta)\to0$ as
$\beta\uparrow1$ whenever $h\alpha>1$, the degeneration happens along the whole
line $\beta=1$.
\end{question}

\begin{question}\label{q:offdiag}
\label{q:Zfloor}
Theorem~\ref{thm:phase}(4) computes $\Phi_{\alpha,\beta,D,h}$ exactly for
$0<\alpha<1$ and $\beta>\alpha$, but its sharp lower bound uses $h$-torsion, and
Remark~\ref{rem:torsion} explains why that argument does not transfer to $\Z$. What
is $\Phi^{\Z}_{\alpha,\beta,D,h}$ in the same range? It lies between the positive
lower bound of Corollary~\ref{cor:dilZ} and the ceiling $1-m_\beta(D)$ of
Proposition~\ref{prop:ceiling}; is the ceiling attained? Over $\Z$ more can be
kept, since any two-element $\{a,b\}\subset\Z$ is a $B_h$ set and hence
$\delta_h(X)\le1-p_{(1)}-p_{(2)}$ for the two largest atoms; whether that
improvement survives the infimum over admissible $X$ we do not know.
\end{question}

\begin{question}
Theorem~\ref{thm:phaseg} treats $g$ as fixed. What happens when $g$ grows with
the block? Along the Bose--Chowla sequence of Lemma~\ref{lem:BC} one has
$M\le\lvert F\rvert^{h-1}$, so \eqref{eq:Fbound} gives
\[
  \frac{b_{h,g}(F)}{\lvert F\rvert}\ \ll_h\
  \Bigl(\frac{g}{\lvert F\rvert}\Bigr)^{1/h} ,
\]
so the mechanism behind the exact constants survives whenever
$g=o(\lvert F\rvert)$, although this counting argument does not show that
condition necessary. Does the conclusion hold in that range, or beyond it? Two
things must be checked: Lemma~\ref{lem:blockconst} is not uniform in $g$, and a
$g$ growing with $C$ must be tracked through Lemma~\ref{lem:mass}.
\end{question}

\begin{remark}\label{rem:kfibre}
The constant $d_\alpha$ in Theorem~\ref{thm:coarse} cannot be improved by
restricting the fibre profile: with $g=1$, $q_z=1$,
$w_1=w_2=\frac12(1-(k-2)\varepsilon)$ and $w_3=\dots=w_k=\varepsilon$, the defect
ratio tends to $d_\alpha$ as $\varepsilon\downarrow0$, so its infimum over fibres
with exactly $k$ elements is $d_\alpha$ for every $k\ge2$. A balance condition is
needed instead: for $k$ equal weights the ratio is
$\lvert k-k^{\alpha}\rvert/(k-1)>d_\alpha$ for $k\ge3$. The sharp constant under
the interpolating hypothesis $w_u\ge\eta q_{\pi(u)}$ is open.
\end{remark}

\medskip\noindent
\textbf{Acknowledgements.} The author used ChatGPT and Claude Opus for feedback
on manuscript presentation, including exposition, organization, and clarity. The
author takes full responsibility for all mathematical content and for the final
manuscript.


\end{document}